\documentclass[preprint,10pt]{imsart}

\RequirePackage[OT1]{fontenc}
\RequirePackage{amsthm,amsmath}
\RequirePackage[numbers]{natbib}
\RequirePackage[colorlinks,citecolor=blue,urlcolor=blue]{hyperref}

\usepackage[dvips]{graphicx}
\usepackage{verbatim}
\usepackage{bbm}
\usepackage{graphicx} % standard LaTeX graphics tool
                     % for including eps-figure files.
                     % Uncomment the \usepackage{graphicx}
                     % if you need to include an eps-figure
                     % file.
\usepackage{subfig}
\usepackage[cmex10]{amsmath}
\usepackage{amssymb}
\usepackage{mathrsfs}
\usepackage{verbatim}
\interdisplaylinepenalty=2500

\usepackage{epstopdf}

% settings
%\pubyear{2016}
%\volume{0}
%\issue{0}
%\firstpage{1}
%\lastpage{29}
%\arxiv{?????}

\startlocaldefs
\numberwithin{equation}{section}
\theoremstyle{plain}
\endlocaldefs

\newcommand{\bN}{\mathbb{N}}

\newcommand{\essup}{ \text{essup}}

\newcommand{\cJ}{\mathcal{J}_\cB}
\newcommand{\cJN}{\mathcal{J}_\cN}

\newcommand{\cA}{\mathcal{A}}
\newcommand{\cE}{\mathcal{E}}
\newcommand{\cB}{\mathcal{B}}
\newcommand{\cC}{\mathcal{C}}

\newcommand{\cF}{\mathcal{F}}

\newcommand{\cN}{\mathcal{N}}
\newcommand{\cI}{\mathcal{I}}

\newcommand{\cH}{\mathcal{H}}
\newcommand{\cP}{\mathcal{P}}

\newcommand{\ccg}{\mathcal{C}_{\gamma}}

\newcommand{\cFt}{\mathscr{F}_{t}}

\newcommand{\calo}{{\mathcal{O}}}
\newcommand{\Exp}{{\sf E}}

\newcommand{\Pro}{{\sf P}}

\newcommand{\ind}[1]{\mathbbm{1}_{\{#1\}}}

\newtheorem{definition}{Definition}

\newtheorem{corollary}{Corollary}
\newtheorem{lemma}{Lemma}
\newtheorem{remark}{Remark}
\newtheorem{theorem}{Theorem}

\begin{document}

\begin{frontmatter}
\title{Efficient Byzantine \\Sequential Change Detection}
\runtitle{Efficient Byzantine Sequential Change Detection}
%\thankstext{T1}{E.B. is  supported in part by the US National Science Foundation under DMS-1613170. L.L. is  supported in part by the US National Science Foundationunder CNS-1457076.}
\dedicated{Dedicated to H. Vincent Poor on the occasion of his 65th Birthday.}

\begin{aug}
\author{\fnms{Georgios} \snm{Fellouris}\thanksref{a,e1}\ead[label=e1,mark]{fellouri@illinois.edu}}
\author{\fnms{Erhan} \snm{Bayraktar}\thanksref{b,e2}\ead[label=e2,mark]{erhan@umich.edu}}
\and
\author{\fnms{Lifeng} \snm{Lai}\thanksref{c,e3}\ead[label=e3,mark]{lflai@ucdavis.edu}}

\thankstext{a}{G.F. is  supported in part by the US National Science Foundation
under CIF-1514245}
\thankstext{b}{E.B. is  supported in part by the US National Science Foundation under DMS-1613170}
\thankstext{c}{L.L. is  supported in part by the US National Science Foundation
under CNS-1457076}

\address[a]{Department of  Statistics,  Coordinated Science Lab \\ University  of Illinois, Urbana-Champaign, \\
IL 61820, USA. \printead{e1}}

\address[b]{Department of  Mathematics \\
 University of Michigan, Ann Arbor \\
 MI  48109, USA   \printead{e2}}

\address[c]{Department of Electrical and Computer Engineering \\
 University of California, Davis \\
CA 95616, USA \printead{e3}}

\runauthor{Fellouris, Bayraktar and Lai}
\affiliation{University of}

\end{aug}

\begin{abstract}
In the multisensor sequential change detection problem, a disruption occurs in an environment monitored by multiple sensors. This disruption induces a change in the observations of an unknown subset of sensors. In the Byzantine version of this problem, which is the focus of this work, it is further assumed that the postulated change-point model may be misspecified for an unknown subset of sensors. The problem then is to detect the change quickly and reliably, for any possible subset of affected sensors, even if the misspecified sensors are controlled by an adversary.  Given a user-specified upper bound on the number of compromised sensors, we propose and study three families of sequential change-detection rules for this problem. These are designed and evaluated under a generalization of Lorden's criterion, where conditional expected detection delay and expected time to false alarm are both computed in the worst-case scenario for the compromised sensors. The first-order asymptotic performance of these procedures is characterized as the worst-case false alarm rate goes to 0. The insights from these theoretical results are corroborated by a simulation study. 
\end{abstract}

\begin{keyword}[class=AMS]
\kwd[Primary ] {62L10}\kwd{60G40}
%\kwd[; secondary ]{60K35}
\end{keyword}

\begin{keyword}
             \kwd{Byzantine}
             \kwd{CUSUM}
              \kwd{Multichannel }
               \kwd{Quickest change detection}   
             \kwd{Sequential change detection}          
\end{keyword}

\end{frontmatter}

\section{Introduction}\label{sec:Intro}

Suppose  that a system   is monitored in real time by multiple sensors that communicate with a fusion center. At an unknown time, a disruption occurs and  induces a change in the observations of a  subset of deployed sensors.  In this context, the  multisensor (or multichannel) sequential (or quickest) change detection problem is to combine at the fusion center  the information from all  sensors  in order to detect the change as soon as possible, while controlling the rate of false alarms. This problem has been studied extensively when the change is perceived by exactly \textit{one} unknown sensor \cite{tarta_steklov,khan,tarta_cdc,blazek,blazek2,tar_veer}. 
The assumption of a unique affected sensor has been removed in various recent works, where the change is allowed  to affect an arbitrary, unknown subset of sensors \cite{mei_bio,xie,greg_paper}. In this context,  even in the absence of any information regarding the affected sensors, 
it is  possible to  achieve  the optimal detection performance, in Lorden's  sense \cite{lorden1}, 
up to a first-order  asymptotic approximation \cite{mei_bio}, or even up to a constant term \cite{greg_paper}, 
as the false alarm rate goes to 0.   In a  related line of research, the  affected sensors perceive the change at possibly different times \cite{MR2299631, rag,oly,lud,oly2}, and the goal is to detect the first of these times. 
 
In all these references  it is assumed that the observations in the unaffected sensors  continue to behave in the same way as before the disruption takes place. 
%The performance loss inflicted by the presence of such unaffected sensors is  ``controllable''. 
 A different formulation, inspired by the Byzantine generals problem in fault tolerant design \cite{byzantine-generals-problem},  was considered in \cite{bay_lai}. There, it was assumed
that  the change is perceived by all  sensors \textit{apart from a single, unknown sensor that is compromised}, in the sense that its   observations are generated by an adversary. %whose goal is to prevent the detection of the change. 
This  formulation is  motivated by the interest in designing fault tolerant quickest detection schemes in   security related scenarios, where an adversary might take control of certain deployed sensors in order to foil change-detection schemes.  We can also think of this  formulation  as a  robust version of the classical multichannel sequential change-detection problem, where the postulated model in some sensors is not  correctly specified. 
% not only be unaffected by the change, but may malfunction in an unpredictable way.
%However, it can also be considered as a robust version of the classical multichannel problem. 

The solution that was proposed in  \cite{bay_lai}  for this
problem was a decentralized \textit{second alarm}, where   each sensor computes  its local Cumulative Sums (CUSUM) statistic \cite{page}, raises an alarm as soon as this statistic exceeds a fixed threshold, and the  fusion center  stops as soon as \textit{two} distinct sensors have raised an alarm.   This scheme  was  analyzed  under a  generalized version of Lorden's criterion, where the worst case scenario is considered for the change-point,  the observations in up to the change-point,  \textit{and also the strategy of the adversary}. In this context  it was shown that the  worst-case detection delay of the second alarm grows logarithmically with its worst-case expected time to false alarm, a property that is not preserved by the centralized CUSUM  rule that assumes that all sensors are honest and affected. Moreover, it was shown that the  performance  of the second alarm can be significantly improved if it is applied, in a centralized fashion, to three disjoint \textit{groups} of sensors. However, the asymptotic performance  of these two schemes was not  characterized,  and  neither of them was shown to be efficient or optimal in any sense.

In this work we generalize the \textit{Byzantine} sequential change detection proposed  in \cite{bay_lai} in two ways.  First,  we allow for more than one sensors to be compromised, and  second, we allow for only an unknown subset of honest sensors to be affected by the change. Thus, we have three categories of sensors: the corrupt (or compromised) ones, which are assumed to be controlled by an adversary,  the honest ones that perceive the change, and the honest ones that do not perceive the change. 

In this context, we propose and study three families of multichannel sequential change-detection schemes that require the computation of the  local CUSUM statistics at the sensors and are parametrized by a number $L$ that takes values between $1$ and $K$, the total number of sensors.  In the first  one, the fusion center raises an alarm when $L$ individual local CUSUM statistics have crossed a common threshold, and we refer to it as the \textit{$L^{th}$ alarm};  the second alarm proposed in \cite{bay_lai} is a special case of this family $(L=2)$.  In the second,  the fusion center stops when $L$  sensors  agree that the change has occurred, in the sense that their corresponding local CUSUM statistics are  simultaneously above a common threshold;  we refer to it as  \textit{voting rule}, since it requires from each sensor to ``vote'' at each time whether the change has occurred. In the  third one, the fusion center stops when the sum of the $L$ smallest local CUSUM  statistics exceeds a threshold,  and we  refer to  it  as \textit{Low-Sum-CUSUM}.

We assume that there is  a known,  user-specified upper bound,  $M$, on the number of compromised sensors, and  we design and analyze the proposed schemes under a   generalization of Lorden's criterion, where the conditional expected detection delay and expected time to false alarm are  evaluated  \textit{in  the  worst case regarding the strategy of the adversary when there are \textit{exactly}   $M$  corrupt sensors}.
% both before and after the change, in $M$ corrupt sensors. % The design of the proposed schemes and their comparisons will be based on this  worst-case setup, in which it will only be possible to detect the change when the number of honest sensors affected by the change is larger than $M$. 

%However,  in  \cite{bay_lai}, we assume that the observations in the honest sensors are independent with the same density before the change, and and identically distributed before/after the change, and the honest sensors are independent and homogeneous. However,

The main contributions of this work are the following: first of all,
for each family of detection rules under consideration, we obtain  bounds on $L$  so that the resulting rules can  control the worst-case false alarm rate below an arbitrary, user-specified level, and  achieve non-trivial detection performance whenever the size of 
the affected subset is larger than $M$. 
%These bounds on $L$ coincide for the three families in the special case that all honest sensors are affected by the change.
%, in which case $L$ must be larger than $M$ and at most equal to $K-M$, i.e., $M+1 \leq L \leq K-M$. 
A particular case of interest arises when $K-M=M+1$. In this case, the only possible value for $L$   is $M+1$ in all schemes, and  the  three resulting procedures  are shown to be  strictly ordered. Specifically,   \textit{for any given false alarm rate}, the $(M+1)$-alarm is shown to be strictly better than the corresponding voting rule, and the latter  strictly better than  \textit{Low-Sum-CUSUM}.

In order to select the parameter $L$ and compare the proposed procedures when $K-M>M+1$, we conduct  an asymptotic analysis   and   characterize their performance  up to a first-order asymptotic approximation as the worst-case false alarm rate goes to 0.
%the resulting rule are able to detect the change when the size of 
%the affected subset, $|\cB|$, is larger than $M$ %As a by-product, we obtain a  first-order asymptotic approximation to the performance of the second alarm  considered  in \cite{bay_lai}.  
These  asymptotic results suggest that the most appropriate choice for $L$ is $M+1$  for the $L^{th}$ alarm and $K-M$ for   \textit{Low-Sum-CUSUM}. With this selection of $L$,  the first-order asymptotic detection delay of  \textit{Low-Sum-CUSUM} is $|\cB|-M$ times smaller than that of the $(M+1)$-alarm, where $\cB$ is the affected subset and $|\cB|$ its size. 

 On the other hand, the proposed choice for $L$, and the resulting  asymptotic performance, in  the family of \textit{voting rules} depends on the prior knowledge regarding the size of the affected subset, $|\cB|$. In the absence of any information, we suggest setting $L=M+1$, in which case the resulting voting rule has the same first-order asymptotic performance as the $(M+1)$-alarm. When $|\cB|$ is known in advance, we suggest setting  $L=|\cB|$, in which case the resulting voting rule has the same first-order asymptotic performance as   \textit{Low-Sum-CUSUM}.

Finally, similarly to   \cite{bay_lai},  
when $K>2M+1$ we also consider a centralized version of the  $(M+1)$-alarm and quantify its asymptotic performance  when
it is known in advance that  all  honest sensors are affected. In this setup, we show that the first-order asymptotic performance of the  centralized $(M+1)$-alarm is  $K/(2M+1)$ smaller than that of the  decentralized $(M+1)$-alarm.  The resulting performance however is inferior to that of the voting rule and \textit{Low-Sum-CUSUM}  with $L=K-M$. 

   These theoretical results are  supported by two  simulation studies, where all honest sensors are affected and there is only one compromised sensor $(M=1)$.  In the first one, the number of honest sensors   exceeds by 1 the number of corrupt sensors, and as predicted from our theoretical results,  the $(M+1)$-alarm is shown to perform better than the voting rule, and the latter to perform better than  \textit{Low-Sum-CUSUM}. In the second simulation study, the number of honest sensors is larger than $M+1$, in which case the previous order is completely reversed. Moreover, the centralized $(M+1)$-alarm performs  significantly better than the decentralized $(M+1)$-alarm, but, as expected, worse than  the voting rule and \textit{Low-Sum-CUSUM} with $L=K-M$. 

The rest of the paper is organized as follows: in Section \ref{sec:formulation} we formulate the problem mathematically. In Section \ref{sec:oracle}  we study various CUSUM-based,  sequential change-detection procedures in  the classical multichannel formulation where there are no corrupt sensors, or equivalently  the honest sensors are known in advance.  In Section \ref{sec:robust}  we introduce and study the proposed procedures in the presence of corrupt sensors. 
In Section \ref{sec:simulation},  we present the results of two simulation studies.  We conclude in Section \ref{sec:conclude}.  

 In terms of  notation, we denote by  $\Theta(1)$ a non-zero constant term, by $\calo(1)$  a bounded from above term,  and by $o(1)$ a vanishing term in a limiting sense that will be understood by the context. We set $x\sim y$ when $x/y \rightarrow 1$, $x \leq y (1+o(1))$ when  $\limsup (x/y) \leq 1$,
 $x \geq y (1+o(1))$ when  $\liminf (x/y) \geq 1$. Moreover, we
 set   $x^{+}=\max\{x,0\}$ and we use $|\cdot |$ to denote the size of a set.

\section{Problem formulation}\label{sec:formulation}
Suppose data are collected  sequentially from $K$ sensors.
For each $k \in [K]$,  let $X^k \equiv \{X_t^k\}_{t \in \bN}$ be the sequence of observations in the $k^{th}$ sensor, where $\bN=\{1, 2, \ldots\}$ and $[K]= \{1, \ldots, K\}$. We assume that there is a  subset $\cN \subseteq [K]$ of independent sensors, in the sense that 
$\{X^k, k \in \cN\}$ are independent sequences. For each $k \in \cN$,  $X^k$ is a sequence of independent random variables,  which are initially distributed according to some density $f$. This density changes 
in a subset of  sensors, $\cB \subseteq \cN$, 
 at some  unknown, deterministic point in  time $\nu \in \{0,1, \ldots\}$ (\textit{change-point}). That is,
\begin{align} \label{model}
\begin{split}
X_t^{k} &\sim f, \quad t \in \bN, \quad k \in \cN \setminus \cB, \\
X_t^{k} &\sim
\begin{cases}
 f, \quad t \leq \nu   \\
 g, \quad   t > \nu,
 \end{cases} k \in \cB.
 \end{split}
\end{align}  
We assume that  $f$ and $g$ are known densities with respect to a $\sigma$-finite measure $\lambda$ and denote by $\cI$ their  Kullback-Leibler information number:
\begin{align} \label{first}   
0 < \cI  \equiv  \int \log  \left(\frac{g(x)}{f(x)}\right) \,  g(x) \, \lambda(dx) < \infty. 
\end{align}
This is a  homogeneous change-point model, in the sense that the pre-change densities are the same in all sensors, and so are the post-change densities in those sensors that perceive the change.  We assume that $\mathcal{I}>0$ without loss of generality,  since otherwise $f=g$ $\lambda$-almost everywhere, and also  $\mathcal{I} <\infty$, which is a standard assumption in the asymptotic analysis of   sequential change-detection procedures (see e.g.,  \cite{lorden1}).  We will refer to certain results in the literature that require the following second-moment assumption:
\begin{align} \label{second}   
\int \left(\log  \left(\frac{g(x)}{f(x)}\right) \right)^2\,  g(x) \, \lambda(dx)<\infty. 
\end{align}
However,  our standing assumption throughout the paper  is that   \eqref{first} holds, and  will not be stated explicitly from now on.
 
  Since the change-point is unknown and observations are collected sequentially, the problem is to find  a stopping rule that determines when to 
stop and declare that the change has occurred based on the data from all sensors. More formally, 
a  \textit{sequential change-detection rule} is an  $\{\cFt\}$-stopping time, where  $\cFt$ is the $\sigma$-field generated by the observations in all sensors up to time $t$, i.e., 
$$
\cFt=\sigma\left(X_s^k: 1\leq s \leq t, k \in [K] \right).
$$ 
Our goal  is to  propose detection rules that are able to detect the change quickly and reliably  \textit{even in the worst-case scenario that  the sensors that do \textit{not} belong to $\cN$ are  controlled by an adversary who tries  to prevent the detection of the change}. Thus, we  refer to the sensors in $\cN$ as \textit{honest}, and to those  that do not belong to $\cN$ as \textit{corrupt, or compromised}. Moreover, we refer to  the sensors in $\cB \subseteq \cN$ as \textit{affected}  (by the change), and to the ones in $\cN \setminus \cB$ as \textit{unaffected} (by the change).

We assume that there are at most $M$ corrupt sensors, where $M \geq 0$ is a user-specified number that can be determined based on prior information on the quality of each sensor. Alternatively, we can think of $M$ as a parameter that  represents the amount of robustness that  we want to introduce to the classical multichannel problem, which corresponds to the case
 $M=0$. When $M=1$ we recover  the setup considered in \cite{bay_lai}.   We consider  the worst possible scenario regarding  the \textit{number} of corrupt sensors and    the \textit{strategy of the adversary}. 

Indeed,  our analysis focuses on the case 
 that  there are exactly $M$ corrupt sensors,  and consequently   $|\cN|=K-M$ honest sensors, with the understanding that the proposed procedures will still be effective when the true number of corrupt sensors is smaller than $M$.   Since we consider a homogeneous change-point model,  we can assume without loss of generality that the subset of honest sensors, $\cN$, is an arbitrary subset of size $K-M$.
 This allows us to lighten the notation by  suppressing in what follows the dependence on $\cN$ of many quantities of interest.

In order to identify the worst-case regarding the strategy of the adversary, we assume that the latter  knows  the true change-point and affected subset,  and has the same access to the observations of the honest sensors as the fusion center. 
To be more specific,  let   $\cH_t$ 
%and $\cG_t$ 
denote  the $\sigma$-field generated by the observations  in the  honest sensors up to time $t$, i.e.
% and corrupt sensors respectively, i.e., 
\begin{align*}
\cH_t &=\sigma\left(X_s^k: 1\leq s \leq t, k \in \cN \right), %\\
%\cG_t &=\sigma \left(X_s^k: 1\leq s \leq t, k \notin %\cN \right),
\end{align*}
suppressing its dependence on $\cN$. Then, we assume that  the observations in the corrupt sensors at time $t$ form a $\cH_t$-measurable random vector, i.e.,  there  is a  Borel function 
$\pi_t$ so that 
$$
X_t^{\cN^c}=\pi_t\left(X_s^\cN, 1\leq s \leq t \right),
$$
where for each subset $\cC \subseteq [K]$ and time $t$ we use the following notation: $ X_t^\cC \equiv  (X_t^k, k \in \cC)$.  We define the strategy of the adversary as the family of deterministic functions
$$\pi \equiv \{\pi_t: t \in \bN\},$$ 
suppressing its dependence on $\cN$,  $\nu$ and  $\cB$.  We will consider the worst-case scenario regarding the  strategy of the adversary when we 
 evaluate both the detection delay and the false alarm rate of a procedure. 

We  denote the probability measure in the underlying canonical space  by  $\Pro_\nu^{\cB,\pi}$ when the change occurs at time $\nu$ in a subset $\cB \subseteq \cN$ of honest sensors and the strategy of the adversary is $\pi$, with the understanding that under  $\Pro_\infty^{\pi}$ there is no change in the honest sensors.
We simply write  $\Pro^{\cB}_\nu$ and $\Pro_\infty$ instead of  $\Pro_\nu^{\cB,\pi}$  and  $\Pro_\infty^{\pi}$ when the event of interest depends only on observations  from the honest sensors.  Following Lorden's \cite{lorden1} approach, we  quantify the delay of a sequential change-detection rule $T$  when the change occurs in subset $\cB$ with the following criterion:  
 $$
\cJ[T] = \sup_{\nu, \pi}  \;  \essup \; \Exp_\nu^{\cB,\pi} \left[ \left(T-\nu\right)^{+}| \cF_\nu \right],
 $$ 
 where $\Exp_\nu^{\cB, \pi}$ is expectation under $\Pro_\nu^{\cB,\pi}$.  Thus,  we consider the worst-case scenario with respect to  change-point $\nu$ and the observations until the time of the change,  as in Lorden's criterion, but now we also consider the worst-case scenario regarding  the strategy of the adversary in the subset of  $M$ corrupt sensors that it controls. We also take a worst-case approach in the quantification of the expected time to false alarm, which we define as follows: 
 $$\cA[T] = \inf_{\pi} \Exp_{\infty}^{\pi}[T].$$
 We denote by $\ccg$ the class of sequential change-detection rules for which the worst-case expected time to false alarm is bounded below by some  user-specified constant $\gamma >1$, i.e., 
$\ccg =\{T :  \cA[T]  \geq \gamma\}$.  We are interested in designing sequential change-detection rules  that belong to    $\ccg$ for some arbitrary  $\gamma >1$, and at the same time have ``small''  worst-case detection delay, $\cJ$, \textit{for ideally every possible affected subset, $\cB$}.  This will turn out to be possible for the proposed procedures  only when the size of the affected subset, $|\cB|$, is  larger than $M$.  With this in mind, we introduce the following notion of domination in order to compare detection rules in our context.  \\

%In this work, we assume an upper bound on the number of compromised sensors, $M \geq 1$. Since both $\cJ$ and $\cA$ consider the worst case regarding the corrupt sensors, the design and analysis of the proposed schemes will be based on the  case that there are \textit{exactly} $M$ affected sensors.  

\begin{definition} \label{def1}
Let $T$ and $S$ be two  multichannel sequentially change-detection rules. We say that $S$ \textit{dominates}  $T$ if  $\cJ[T] \geq\cJ[S]$ for every $\cB \subseteq \cN$  so that $M+1 \leq |\cB| \leq K-M$ whenever   $\cA[T] \leq \cA[S]$. \\
\end{definition}

Such a strict domination property will arise  only in the  special case that $K=2M+1$.  In  general, our comparisons will rely on asymptotic approximations as the worst-case false alarm rate goes to 0,  which leads to  the following definition. \\

\begin{definition} \label{def2}
Let $T$ and $S$ be  
multichannel sequential  change-detection rules. We say that $S$ \textit{is asymptotically more efficient than  $T$} if  
$\cJ[T] \geq \cJ[S] \, (1+o(1))$ for every $\cB \subseteq \cN$ so that $M+1 \leq |\cB| \leq K-M$  as $\cA[T] = \cA[S] \rightarrow \infty$. 
\end{definition}

% for every possible affected subset $\cB$. Moreover, we will say that $T$ is \textit{robust}  if  $\cJ[T]$ grows logarithmically with $\cA[T]$  for every possible affected subset $\cB$. 
%Our main goal in this work is to propose robust and efficient sequential change-detection rules in the presence of corrupt sensors, which is done in Section \ref{sec:robust}. 

%\begin{equation}\label{always}
%|\cN^c|=M < K-M =|\cN|,
%\end{equation}
 %i.e., the number of corrupt sensors is smaller than the  number of honest sensors, and consequently $K>2M$. 

\section{The classical multichannel setup}\label {sec:oracle} 
In this section  we consider  the classical multichannel framework where  there are no corrupt sensors, or equivalently the subset, $\cN$, of honest sensors is known in advance, but the subset of affected sensors, $\cB \subseteq \cN$, is not.   The procedures and results of this section will provide the basis for the methods and analysis  in Section \ref{sec:robust} where $\cN$ will also be unknown. However, the results in this section  may also be of  independent interest for the classical multichannel problem itself, as  we revisit  various  multichannel, CUSUM-based schemes in the literature. 
% and we also propose a novel scheme (Subsection \ref{subsec:LowSumCUSUM}) that will turn out to be  particularly suitable in the presence of corrupt sensors. 

\subsection{Notation} \label{subsec:classical_notation}
For each $\cC \subseteq \cN$ and $t \in \bN$  we denote  by $Z_t^\cC$ the cumulative log-likelihood ratio of the first $t$ observations in the sensors in $\cC$, i.e.,
% $\ell_t^k$ the log-likelihood ratio of $X_t^k$ and by
\begin{align} \label{LLr}   
Z_t^\cC=Z_{t-1}^{\cC}+ \sum_{k \in \cC} \ell_t^k; \quad  Z_0^\cC \equiv 0,
\end{align}
where  $\ell_t^k$ is the log-likelihood ratio of the $t^{th}$ observation in sensor $k$, i.e., 
\begin{align} \label{llr}   
 \ell_t^k =\log \left( \frac{g(X_t^k)}{f(X_t^k)} \right). 
\end{align}
We denote by $W_t^\cC$ Page's \cite{page} CUSUM statistic at time $t$  for detecting a change in  subset $\cC \subseteq \cN$, i.e., 
\begin{align*} 
  W_t^{\cC}= \left( W_{t-1}^{\cC} + \sum_{k \in \cC} \ell_t^k \right)^{+} ; \quad W_0^\cC \equiv 0.
\end{align*}
 We denote by $\sigma_{\cC}(h)$  the corresponding CUSUM stopping time, that is the  first time the process $W^\cC$  exceeds a positive threshold $h$,   i.e., 
\begin{align} \label{opt} 
\sigma_{\cC}(h) = \inf\left\{t \in \bN: W_t^{\cC} \geq h\right\}.
 \end{align} 
 When $\cC=\{k\}$ for some $k \in \cC$, we simply write $Z^k_t$,   $W_t^k$ and $\sigma_{k}(h)$, instead of $Z_{t}^{\{k\}}$, $W_t^{\{k\}}$ and  $\sigma_{\{k\}}(h)$.  Moreover, we use  the following notation for the ordered local CUSUM stopping times and statistics: 
\begin{align}  \label{ordered_CUSUMs}
\begin{split}
& \sigma_{(1)}(h) \leq \ldots \leq \sigma_{(|\cN|)}(h) ,  \\
& W_t^{(1)} \leq \ldots \leq W_t^{(|\cN|)} .
\end{split}
  \end{align}

 \subsection{Centralized CUSUM and the optimal performance}
 It is useful for the subsequent development to recall some well-known properties of the centralized CUSUM stopping time,
$\sigma_{\cC}(h)$. For every $s \in \bN$  and $\cC \subseteq \cN$ we have  (see, e.g., \cite[Appendix 2]{sieg2}) that   
\begin{align} \label{domin} 
\Pro_\infty( W_s^\cC \geq h)  \leq e^{-h}.
  \end{align}  
In fact,   $\sigma_{\cC}(h) / e^h$ is asymptotically exponential (see, e.g., \cite{khan}),  and consequently  as $h \rightarrow \infty$
\begin{align} \label{opt_ARL} 
\Exp_{\infty} \left[\sigma_{\cC}(h) \right] \sim \Theta(1) \, e^{h}.
  \end{align}  
 Moreover, we have the following  decomposition of the CUSUM detection statistic
\begin{equation} \label{cusum_decomposition}
W_t^\cC= Z_t^\cC+  m_t^{\cC}, 
\quad m_t^{\cC}= -\min_{0 \leq s \leq t} Z_s^{\cC},
\end{equation} 
which implies that $W_t^\cC\geq  Z_t^\cC$ for every $t$.  In view of this decomposition,  from non-linear renewal theory \cite[Section 2.6]{TNB_book2014} 
it follows that  when  $\cC$ is included in the affected subset $(\cC\subseteq \cB)$, 
%, sincethe worst-case scenario for the observations  of the sensors in $\cC$ up to the time of the change $\nu$ is that $W^{\cC}_{\nu}=0$ and    $W^\cC$ regenerates whenever it hits 0. from  non-linear renewal theory \cite[Section 2.6]{TNB_book2014} it follows that
then as $h \rightarrow \infty$ 
\begin{equation} \label{cusum_as_approximation}
\Pro_{0}^{\cB} \left( \sigma_{\cC}(h) \sim \frac{h}{|\cC| \, \cI} \right) =1,
\end{equation} 
and consequently  for any $r \geq 1$ 
\begin{equation} \label{cusum_moment_approximation}
\Exp_{0}^{\cB} \left[\sigma^r_{\cC}(h) \right] \sim  \left( \frac{h}{|\cC| \, \cI} \right)^r.
\end{equation}
Since  $\cJ\left[\sigma_{\cC}(h)\right]= \Exp_{0}^{\cB} \left[\sigma_{\cC}(h) \right]$ for every $h>0$ (see, e.g. \cite{moust1}), if  $h=h_\gamma$ is so that $\Exp_{\infty} \left[\sigma_{\cC}(h_\gamma) \right]  =\gamma$, then 
  as $\gamma \rightarrow \infty$  
 \begin{align} \label{cusum_perf} 
\cJ \left[\sigma_{\cC}(h_\gamma) \right] \sim \frac{\log \gamma}{|\cC| \cI }.
  \end{align}  
When in particular $\cC=\cB$ and $h=h_\gamma$ is selected so that  $\Exp_\infty[\sigma_{\cB}(h_\gamma)]=\gamma$,  $\sigma_{\cB}$  optimizes $\cJ$ within the class of detection rules $\ccg$ \cite{moust1}. This optimality property, combined with \eqref{cusum_perf}, implies that  a first-order approximation to the  optimal  performance as $\gamma \rightarrow \infty$ is 
\begin{align} \label{opt_perf} 
\inf_{T \in \ccg}\cJ[T] \sim \frac{\log \gamma} {|\cB| \,\cI},
  \end{align}  
a result that was originally established in   \cite{lorden1} in a different way. We will refer to $\sigma_\cB$ as the \textit{optimal} or \textit{oracle} CUSUM, as it achieves the optimal performance but requires knowledge of the affected subset $\cB$.  

\subsection{Decentralized, multichannel detection rules}
When the subset of affected sensors is not known in advance,  it is desirable to design procedures that have ``good'' performance  under any possible affected subset. This is known to be possible even in the absence of any prior information regarding the affected subset.  Indeed, under the second moment condition \eqref{second},  the optimal performance is achievable  up to a constant term  under \textit{any} possible affected subset, e.g.,  by  the  \textit{GLR-CUSUM},  $ \min_{\cC \subseteq \cN} \sigma_{\cC}(h)$,  with $h=\log \gamma$ \cite{greg_paper}. %We will refer to this rule as \textit{GLR-CUSUM}, as it relies on maximizing the  CUSUM detection statistic with respect to the unknown affected subset. 
While this is a recursive rule, the number of recursions it requires grows exponentially with the number of honest sensors, $|\cN|$. %On the other hand, it admits  the following representation
%\begin{align} \label{GLR_cusum}
 %\inf\left\{t \in \bN: \max_{1 \leq s \leq t} \sum_{k \in \cN} (Z_t^k-Z_s^k)^+ \geq h\right\},
%\end{align}
%which is scalable with respect to the number of sensors, but its memory requirements can be very large when the false alarm rate is small.  
On the other hand, it is possible to achieve the optimal performance  for any possible affected subset,
 up to a \textit{first-order} approximation,  by  a procedure whose detection statistic is an increasing function of the local  CUSUM statistics (see Subsection \ref{subsec:sum_cusum}). 
 In this work, we focus on multichannel sequential procedures of this form, and the following lemma is useful for analyzing their worst-case detection delay. 

\begin{lemma} \label{lem:worst_case}
Let $\psi: [0,\infty)^{|\cN|} \rightarrow [0,\infty)$ be 
 a non-constant function that is  increasing in each of its arguments,
  and  consider the detection rule 
$$
S^*(h)=\inf\{t \in \bN: \psi\left( W_t^{1}, \ldots, W_t^{|\cN|} \right) \geq h\}.
$$ 
Then,   $\cJ\left[S^*(h)\right]= \Exp_{0}^{\cB} \left[S^*(h) \right]$ for every $h>0$.
\end{lemma}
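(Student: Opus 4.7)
The plan is to prove the two inequalities $\cJ[S^*(h)] \leq \Exp_{0}^{\cB}[S^*(h)]$ and $\cJ[S^*(h)] \geq \Exp_{0}^{\cB}[S^*(h)]$ separately; the second one is immediate, while the first rests on the classical ``fresh start'' bound for Page's CUSUM statistic combined with the monotonicity of $\psi$. Note that in Section~\ref{sec:oracle} we are in the classical setup ($\cN=[K]$, no adversary), so $\cJ[T]=\sup_\nu\,\essup\,\Exp_{\nu}^{\cB}[(T-\nu)^+ \mid \cF_\nu]$.

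The key observation is the forward representation $W_t^k=\max_{0\le j \le t}(Z_t^k-Z_j^k)$ for each $k\in\cN$. Splitting the maximum at $j=\nu$, one obtains for every $s\ge 0$,
\begin{equation*}
W_{\nu+s}^k \;=\; \max\left(W_\nu^k + \sum_{u=\nu+1}^{\nu+s}\ell_u^k,\; \widetilde W_s^k\right) \;\geq\; \widetilde W_s^k,
\end{equation*}
where $\widetilde W^k$ is the ``restarted'' CUSUM driven only by the post-$\nu$ log-likelihood ratios, i.e.\ $\widetilde W_0^k=0$ and $\widetilde W_s^k=(\widetilde W_{s-1}^k + \ell_{\nu+s}^k)^+$. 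Because $\psi$ is increasing in every coordinate,
\begin{equation*}
\psi\bigl(W_{\nu+s}^1,\ldots,W_{\nu+s}^{|\cN|}\bigr) \;\geq\; \psi\bigl(\widetilde W_s^1,\ldots,\widetilde W_s^{|\cN|}\bigr),
\end{equation*}
so defining $\widetilde S^*(h)=\inf\{s\in\bN:\psi(\widetilde W_s^1,\ldots,\widetilde W_s^{|\cN|})\ge h\}$, one obtains $(S^*(h)-\nu)^+ \leq \widetilde S^*(h)$ pointwise.

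Taking conditional expectation given $\cF_\nu$ under $\Pro_\nu^\cB$, and using that the post-$\nu$ observations of the affected (resp.\ unaffected) honest sensors are i.i.d.\ $g$ (resp.\ $f$) and independent of $\cF_\nu$, the distribution of $\widetilde S^*(h)$ under $\Pro_\nu^\cB$ coincides with that of $S^*(h)$ under $\Pro_0^\cB$. Hence
\begin{equation*}
\Exp_\nu^{\cB}\bigl[(S^*(h)-\nu)^+ \mid \cF_\nu\bigr] \;\leq\; \Exp_\nu^{\cB}[\widetilde S^*(h)\mid \cF_\nu] \;=\; \Exp_{0}^{\cB}[S^*(h)]\quad \Pro_\nu^\cB\text{-a.s.},
\end{equation*}
giving $\cJ[S^*(h)]\leq \Exp_0^{\cB}[S^*(h)]$. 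The reverse inequality follows by restricting the supremum to $\nu=0$, noting that $\cF_0$ is trivial and $W_0^k=0$ for every $k$, so $\Exp_0^\cB[(S^*(h))^+\mid\cF_0]=\Exp_0^\cB[S^*(h)]$.

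The only delicate point is the ``fresh start'' decomposition together with the identification in distribution of $\widetilde S^*(h)$ under $\Pro_\nu^\cB$ with $S^*(h)$ under $\Pro_0^\cB$; this is exactly the ingredient that drives Moustakides' argument for the single-stream CUSUM and extends here to any monotone statistic of the local CUSUMs by the coordinatewise monotonicity hypothesis on $\psi$. No moment or nondegeneracy assumptions beyond those already standing are needed, and the argument goes through even when $\Exp_0^\cB[S^*(h)]=\infty$.
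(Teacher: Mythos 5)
Your proof is correct and follows essentially the same route as the paper, which argues in two sentences that the worst case for $\cF_\nu$ is $W_\nu^k=0$ for all $k\in\cN$ and that the CUSUM vector regenerates at the origin; your fresh-start decomposition $W_{\nu+s}^k=\max\bigl(W_\nu^k+\sum_{u=\nu+1}^{\nu+s}\ell_u^k,\;\widetilde W_s^k\bigr)\ge\widetilde W_s^k$ combined with the coordinatewise monotonicity of $\psi$ is precisely the rigorous version of that regeneration argument. Nothing further is needed.
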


\begin{proof} 
The worst-case scenario for the observations up to the time of the change $\nu$ is that $W^{k}_{\nu}=0$ for every $k \in \cN$. 
Under $\Pro_\infty$, the process $(W^1, \ldots, W^{\cN})$ is a Markov chain that regenerates whenever all its components are equal to  0, which completes the proof. 
\end{proof}

\subsection{One-shot  schemes and voting rules}
Let $1 \leq L \leq |\cN|$.  We refer to $\sigma_{(L)}(h)$, 
defined in \eqref{ordered_CUSUMs},  as the \textit{$L^{th}$ honest alarm}, since it represents the first time the  local CUSUM statistics in $L$ honest sensors have crossed level $h$. A related stopping rule  is
\begin{align} %\label{general}
  S_{L}(h) &=\inf\left\{t \in \bN: W_t^{(|\cN|-L+1)}  \geq h \right\},  
 \end{align}
 which is the first time the local CUSUM statistics in  $L$ honest sensors are \textit{simultaneously} above $h$;  we  will refer to it  as  \textit{voting rule}, since  it requires from each sensor to ``vote''  at each time whether the change has occurred or not. 
 In general, we have  $\sigma_{(L)}(h) \leq S_{L}(h)$ for every $h>0$ and $1 \leq L \leq |\cN|$, with equality when $L=1$.  The \textit{first-alarm}, $\sigma_{(1)}$,   achieves  the optimal asymptotic performance \eqref{opt_perf}  when $|\cB|=1$, i.e., when exactly one sensor is affected by the change \cite{blazek,blazek2}.  The ``consensus'' rule, $S_{|\cN|}$, which stops when all honest local CUSUM statistics are simultaneously above a common threshold, is also known to be asymptotically optimal when \textit{all}  honest sensors are affected by the change ($\cB=\cN$) \cite{mei1}. These two results were shown recently \cite{sourabh_conf} to be special cases of a more general result, according to which the voting rule, $S_L$, achieves the optimal first-order asymptotic performance  \eqref{opt_perf}  when $L$ is equal to the size of the affected subset, $|\cB|$. In  Theorem  \ref{th:voting_optimality} we establish this result under only the first moment condition \eqref{first}, removing the  second moment condition  \eqref{second} that was assumed in both  \cite{mei1} and  \cite{sourabh_conf}. \\

% When  $L=|\cN|$, we will refer to $S_{L}$ as  the  \textit{consensus rule} \cite{mei1} , as it requires agreement from all sensors that the change has occurred.  This rule had been shown in \cite{mei} to achieve the optimal asymptotic performance \eqref{opt_perf} in the case that all sensors  These two classes of detection rules have been studied in a unified way in \cite{sourabh_conf}.
 %In order to state them,  we introduce the following notation:$$D_{L:|\cN|}= \sum_{i=|\cN|-L+1}^{|\cN|} (1/i),  \quad 1 \leq L \leq |\cN|. $$

%Of course,   the $L^{th}$ honest alarm is a meaningful procedure only when $L$ does not exceed the  size of the affected subset, $\cB$.    These rules have been studied in \cite{sourabh_conf},  where it was shown that the first honest alarm is the more efficient member in this class, although they all achieve the same, first-order asymptotic performance. The following lemma, whose proof can be found in \cite{sourabh_conf}, provides asymptotic approximations to the operating characteristics of the $L^{th}$ honest alarm. For this purpose we introduce the following notation:

\begin{lemma}  \label{lem: ARL_one_shot_and_voting}
Let $1 \leq L \leq |\cN|$.  Then as $h \rightarrow \infty$ 
\begin{equation} \label{ARL_voting} 
 \Exp_{\infty} \left[S_{L}(h) \right] \geq \Theta(1) \, e^{L h},
\end{equation} 
\begin{equation} \label{ARL_one_shot} 
 \Exp_{\infty} \left[\sigma_{(L)}(h) \right] \sim  \Theta(1)  \, e^{h}.
\end{equation}
\end{lemma}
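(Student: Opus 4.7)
Proof plan: For \eqref{ARL_voting}, the strategy will be a direct tail estimate based on \eqref{domin} and the independence of the honest sensors. First I fix $s \in \bN$ and a subset $\cC \subseteq \cN$ with $|\cC|=L$; the independence of the honest CUSUM statistics under $\Pro_\infty$, combined with \eqref{domin} applied sensor-wise, gives
$$
\Pro_\infty\bigl(W_s^k \geq h \text{ for all } k\in\cC\bigr) = \prod_{k\in\cC}\Pro_\infty(W_s^k\geq h) \leq e^{-Lh}.
$$
Since $\{S_L(h)\leq t\}=\bigcup_{s\leq t}\bigcup_{|\cC|=L}\{W_s^k\geq h,\,\forall k\in\cC\}$, a double union bound then yields $\Pro_\infty(S_L(h)\leq t) \leq \binom{|\cN|}{L}\,t\,e^{-Lh}$. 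Choosing $t_h = e^{Lh}/(2\binom{|\cN|}{L})$ makes $\Pro_\infty(S_L(h)>t_h) \geq 1/2$, and the elementary lower bound $\Exp_\infty[S_L(h)]\geq t_h\,\Pro_\infty(S_L(h)>t_h)$ delivers \eqref{ARL_voting}.

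For \eqref{ARL_one_shot}, I will establish the stronger statement that $\Exp_\infty[\sigma_{(L)}(h)]/e^h$ tends to a positive finite constant. The upper bound is immediate from $\sigma_{(L)}(h) \leq \max_{k\in\cN}\sigma_k(h) \leq \sum_{k\in\cN}\sigma_k(h)$ together with \eqref{opt_ARL}, which gives $\limsup_h \Exp_\infty[\sigma_{(L)}(h)]/e^h \leq |\cN|\,\Theta(1)$. For the matching exact asymptotic, I will combine the asymptotic exponentiality $\sigma_k(h)/e^h \to \mathrm{Exp}(\lambda)$ in distribution, recalled just before \eqref{opt_ARL}, with the independence of the honest sensors under $\Pro_\infty$, in order to deduce that the vector $(\sigma_k(h)/e^h)_{k\in\cN}$ converges in distribution to a vector of i.i.d.\ exponentials. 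The continuous mapping theorem then yields weak convergence of $\sigma_{(L)}(h)/e^h$ to $Y_{(L)}$, the $L$-th order statistic of $|\cN|$ i.i.d.\ $\mathrm{Exp}(\lambda)$ random variables, which has positive finite mean.

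The remaining step, and the main technical obstacle, is to promote this distributional convergence to convergence of first moments, which I will handle through a uniform integrability argument. Each family $\{\sigma_k(h)/e^h\}_{h>0}$ consists of nonnegative random variables, converges weakly to an exponential, and has $\Exp_\infty[\sigma_k(h)/e^h]$ tending to the mean of the limit by \eqref{opt_ARL}; these three properties together imply $L^1$-convergence, hence uniform integrability. The family $\{\sum_{k\in\cN}\sigma_k(h)/e^h\}_{h>0}$ is then a finite sum of uniformly integrable families and so is itself uniformly integrable, and the pointwise domination $\sigma_{(L)}(h) \leq \sum_{k\in\cN}\sigma_k(h)$ transfers this property to $\{\sigma_{(L)}(h)/e^h\}_{h>0}$. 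Weak convergence together with uniform integrability finally gives $\Exp_\infty[\sigma_{(L)}(h)]/e^h \to \Exp[Y_{(L)}] \in (0,\infty)$, which is \eqref{ARL_one_shot}.
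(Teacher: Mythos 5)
Your proposal is correct, and it is worth noting that it does genuinely more work than the paper, whose entire proof of this lemma consists of two citations: \eqref{ARL_voting} is attributed to Theorem 3.2 of the reference on voting rules, and \eqref{ARL_one_shot} is asserted to ``follow from the asymptotic exponentiality of the independent stopping times $\sigma_k(h)$'' without further detail. Your argument for \eqref{ARL_voting} --- applying \eqref{domin} sensor-wise, using independence under $\Pro_\infty$ to get the $e^{-Lh}$ bound for a fixed subset of size $L$, and then a union bound over times and subsets followed by the elementary estimate $\Exp_\infty[S_L(h)] \geq t_h \Pro_\infty(S_L(h)>t_h)$ --- is exactly the technique the paper itself deploys in its Lemmas on \emph{Sum-CUSUM} and \emph{Low-Sum-CUSUM} (there packaged as Lemma \ref{newlemma}(i) in the Appendix), so you have in effect reproved the cited external result with the paper's own toolkit. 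For \eqref{ARL_one_shot} you have filled in the step the paper leaves implicit: marginal asymptotic exponentiality plus independence gives joint weak convergence of $(\sigma_k(h)/e^h)_{k\in\cN}$ to i.i.d.\ exponentials, the continuous mapping theorem handles the order statistic, and the passage from weak convergence to convergence of first moments is justified by the standard fact that nonnegativity, weak convergence, and convergence of means (supplied by \eqref{opt_ARL}) imply uniform integrability of each $\{\sigma_k(h)/e^h\}$, which the domination $\sigma_{(L)}(h) \leq \sum_{k\in\cN}\sigma_k(h)$ transfers to the order statistic. This uniform integrability step is the one real technical obstacle in upgrading the citation to a proof, and you have handled it correctly; the only cosmetic point is that $t_h$ should be taken as an integer part, which changes nothing.
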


\begin{proof}
The  lower bound in \eqref{ARL_voting} was established in \cite[Theorem 3.2]{sourabh_conf}.  The asymptotic approximation in \eqref{ARL_one_shot} follows from the asymptotic exponentiality \cite{khan} of the independent stopping times $\sigma_k(h)$, $1 \leq k \leq  |\cN|$. \\
\end{proof}

It is clear that for both $\sigma_{(L)}$ and $S_L$ to have  non-trivial detection performance, $L$ needs to be at most equal to the size of the affected subset, $|\cB|$. 
Indeed, when $L>|\cB|$, at least one of the $L$ alarms needs to come from an unaffected sensor, 
and consequently  $S_{L}(h) \geq \sigma_{(L)}(h) \geq \min_{k \in \cN \setminus \cB} \sigma_{k}(h)$, which means that the expected detection delay of $\sigma_{(L)}$ and   $S_L$ will be larger than the expected time to false alarm from at least one of the unaffected sensors.  Thus, the following lemma describes the asymptotic detection delay of  $\sigma_{(L)}$ and $S_L$ when $L\leq |\cB|$. \\

\begin{lemma}
If $1 \leq L \leq |\cB|$, then as $h \rightarrow \infty$
\begin{align} \label{ADD_one_shot} 
 \Exp_0^{\cB}\left[\sigma_{(L)} (h)\right]  
\sim h/\cI 
\sim \Exp_0^{\cB}\left[S_{L}(h)\right].
\end{align}
%When in particular $\cB=\cN$, the latter asymptotic upper bounds are sharp  in the sense that   as $h \rightarrow \infty$
%\begin{equation} \label{ADD_one_shot_all_affected} 
%\cJN\left[\sigma_{(1)} (h)\right] \sim \cJN\left[\sigma_{(L)} (h)\right] \sim 
%\frac{h}{\cI} \sim   \cJN\left[S_{L}(h)\right] \sim \cJN\left[S_{|\cN|}(h)\right].
%\end{equation} 
\end{lemma}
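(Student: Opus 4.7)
The plan is to sandwich both $\sigma_{(L)}(h)$ and $S_L(h)$ between quantities of order $h/\cI$, leveraging the deterministic inequality $\sigma_{(L)}(h) \leq S_L(h)$. It therefore suffices to establish $\Exp_0^\cB[\sigma_{(L)}(h)] \geq (h/\cI)(1-o(1))$ and $\Exp_0^\cB[S_L(h)] \leq (h/\cI)(1+o(1))$. For the first bound I use $\sigma_{(L)}(h) \geq \sigma_{(1)}(h) = \min_{k \in \cN} \sigma_k(h)$. By \eqref{cusum_as_approximation}, $\sigma_k(h)/(h/\cI) \to 1$ almost surely for each $k \in \cB$, whereas for $k \in \cN \setminus \cB$ the local CUSUM behaves as under $\Pro_\infty$ and $\sigma_k(h)$ grows exponentially in $h$ in expectation (cf.~\eqref{opt_ARL}); hence a.s.~such sensors do not attain the minimum once $h$ is large, and $\min_{k \in \cN} \sigma_k(h)/(h/\cI) \to 1$ a.s. The domination $\min_k \sigma_k(h) \leq \sigma_{k_0}(h)$ for a fixed $k_0 \in \cB$, combined with the $L^2$-boundedness of $\sigma_{k_0}(h)/(h/\cI)$ from \eqref{cusum_moment_approximation}, supplies the uniform integrability that lifts a.s.~convergence to convergence in $\Exp_0^\cB$.

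For the matching upper bound on $\sigma_{(L)}$, fix any $\cC \subseteq \cB$ with $|\cC|=L$, which exists since $L \leq |\cB|$. Because $\sigma_{(L)}(h)$ is the $L$-th smallest element of $\{\sigma_k(h) : k \in \cN\}$, the deterministic bound $\sigma_{(L)}(h) \leq \max_{k \in \cC} \sigma_k(h)$ holds: at $t = \max_{k \in \cC} \sigma_k(h)$ all $L$ local CUSUMs indexed by $\cC$ have crossed $h$. Since each $\sigma_k(h)/(h/\cI) \to 1$ a.s.~by \eqref{cusum_as_approximation}, so does their max over the finite set $\cC$, and the uniform integrability obtained via $\max \leq \sum$ and \eqref{cusum_moment_approximation} gives $\Exp_0^\cB[\max_{k \in \cC}\sigma_k(h)]/(h/\cI) \to 1$. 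Hence $\Exp_0^\cB[\sigma_{(L)}(h)] \sim h/\cI$.

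The upper bound on $\Exp_0^\cB[S_L(h)]$ is the principal obstacle, because a local CUSUM can drop back below $h$ after its first crossing, so the clean bound $S_L(h) \leq \max_{k \in \cC} \sigma_k(h)$ is \emph{not} available. To sidestep this, I would invoke the decomposition \eqref{cusum_decomposition}, which yields $W_t^k = Z_t^k + m_t^k \geq Z_t^k$ pointwise since $m_t^k \geq 0$. Fixing $\cC \subseteq \cB$ with $|\cC|=L$, this gives
\[
S_L(h) \;\leq\; \tau_\cC(h) \;:=\; \inf\{t \in \bN : Z_t^k \geq h \text{ for all } k \in \cC\}.
\]
For each $k \in \cB$, the SLLN applied to $Z^k$ under $\Pro_0^\cB$ yields $Z_t^k/t \to \cI$ a.s., so $\min_{k \in \cC} Z_t^k/t \to \cI$ a.s., whence $\tau_\cC(h)/h \to 1/\cI$ a.s. Upgrading this to $L^1$ convergence is the delicate step of the proof. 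I would split at $t_\epsilon := (1+\epsilon)h/\cI$: the truncated piece $\Exp_0^\cB[\tau_\cC(h);\tau_\cC(h) \leq t_\epsilon]$ is at most $t_\epsilon$, while the tail $\Exp_0^\cB[\tau_\cC(h);\tau_\cC(h) > t_\epsilon]$ is controlled by pairing the weak-LLN estimate $\Pro_0^\cB(\tau_\cC(h) > t_\epsilon) \leq \sum_{k \in \cC}\Pro_0^\cB(Z_{t_\epsilon}^k < h) \to 0$ with a uniform $L^r$-bound on $\tau_\cC(h)/(h/\cI)$ inherited from the moment approximations in \eqref{cusum_moment_approximation}. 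Letting $\epsilon \downarrow 0$ yields $\Exp_0^\cB[S_L(h)] \leq (h/\cI)(1+o(1))$, and combining with $S_L(h) \geq \sigma_{(L)}(h)$ closes the argument.
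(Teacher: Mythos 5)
Your overall architecture matches the paper's: everything is sandwiched between $\sigma_{(1)}(h)$ and the simultaneous crossing time of the random walks $Z^k$, and the reduction $S_L(h)\leq\inf\{t: W_t^k\geq h\ \forall k\in\cC\}\leq\inf\{t: Z_t^k\geq h\ \forall k\in\cC\}$ for $\cC\subseteq\cB$, $|\cC|=L$, is exactly the paper's. The genuine gap is in how you close the upper bound. You claim a uniform $L^r$-bound on your $\tau_\cC(h)/(h/\cI)$ ``inherited from the moment approximations in \eqref{cusum_moment_approximation}'', but \eqref{cusum_moment_approximation} concerns $\sigma_\cC(h)$, the first passage time of the \emph{summed} statistic $W^\cC$, and the only available comparison is $\sigma_\cC(h)\leq\tau_\cC(h)$ --- the wrong direction. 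The simultaneous crossing time $\inf\{t:\min_{k\in\cC}Z_t^k\geq h\}$ is a genuinely different object (one component can cross $h$ and fall back while the others are still climbing), and nothing you have established controls its second moment; the standard route to such a bound goes through last-exit times of the individual walks, whose moments are governed by Baum--Katz-type conditions and require a separate argument. Without that bound, your H\"older/truncation step $\Exp_0^\cB[\tau_\cC(h);\tau_\cC(h)>t_\epsilon]=o(h)$ has no support. This is precisely the point at which the paper instead invokes Lemma \ref{lem:simultaneous} (Farrell's theorem), which gives $\Exp[T_b]\sim b/\min_i\mu_i$ for the simultaneous boundary-crossing time of random walks with integrable increments directly, with no second-moment detour; citing that lemma (or proving the missing moment bound) is what your argument needs.

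A secondary, fillable issue: for the lower bound you assert $\min_{k\in\cN}\sigma_k(h)/(h/\cI)\to 1$ almost surely on the grounds that unaffected sensors have exponentially large \emph{expected} alarm times \eqref{opt_ARL}; a diverging expectation does not yield an almost-sure statement. The paper instead uses the nonasymptotic bound $\Pro_\infty(\min_{k\notin\cB}\sigma_k(h)\leq t)\leq t\,(|\cN|-|\cB|)\,e^{-h}$ from \eqref{domin} together with Lemma \ref{newlemma}(ii); alternatively, Borel--Cantelli along integer thresholds plus monotonicity of $\sigma_k(h)$ in $h$ would patch your claim. Note also that for the lower bound Fatou's lemma alone suffices, so the uniform integrability you invoke there is unnecessary.
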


\begin{proof}
For every  $h>0$ and $ 1 \leq L \leq |\cB|$ we clearly have 
$\sigma_{(1)} (h)  \leq \sigma_{(L)}(h) \leq S_{L}(h) \leq S_{|\cB|}(h)$.   Therefore, it suffices to show that as $h \rightarrow \infty$
\begin{align}\label{show}
\begin{split}
 \Exp_0^{\cB}[ S_{|\cB|}(h) ] &\leq  \frac{h}{\cI} \; (1+o(1)),  \\ \Exp_0^\cB \left[\sigma_{(1)} (h)\right]  &\geq \frac{h}{\cI} \; (1+o(1)). 
 \end{split}
\end{align}
For every $h >0$ we have 
\begin{align*}
 S_{|\cB|}(h) &\leq \inf\{t \in \bN: W_t^k \geq h, \quad  \forall  \;  k \in \cB\}  \\
 &\leq \inf\{t \in \bN: Z_t^k \geq h,  \quad \forall  \;    k \in \cB\}.
\end{align*}
%i.e.,  $S_{|\cB|}(h)$ stops before the first time a number of random walks with drift $\cI$ are simultaneously above a threshold $h$.  
The asymptotic upper bound in \eqref{show} then follows from Lemma \ref{lem:simultaneous} in the Appendix.  

It remains to prove the asymptotic lower bound in \eqref{show}. To this end, we observe that $\sigma_{(1)}$ can be represented as the minimum of  a stopping time that perceives the change and one that does not. Specifically: 
$$
\sigma_{(1)}(h)= \min \left\{ \min_{k \in \cB} \sigma_{k}(h), \; \min_{k \notin \cB} \sigma_{k}(h) \right\}.
$$
It then suffices to show that  as $h \rightarrow \infty$ we have
\begin{align}
\Exp_0^\cB \left[\sigma_{(1)} (h)\right] &=
\Exp_0^{\cB}\left[ \min_{k \in \cB} \sigma_{k}(h) \right] -o(1) \label{ren0}\\
\Exp_0^{\cB}\left[ \min_{k \in \cB} \sigma_{k}(h) \right] &\geq \frac{h}{\cI} \, (1+o(1)).  \label{ren00}
\end{align} 
In order to prove \eqref{ren0} we  rely on Lemma \ref{newlemma}(ii) in the Appendix.  From Boole's inequality, for every $t=0,1, \ldots$, we have 
\begin{align*}
\Pro_\infty \left( \min_{k \notin \cB} \sigma_{k}(h)  \leq t \right) 
&\leq \sum_{k \notin \cB} \Pro_\infty \left( \sigma_{k}(h)  \leq t \right)  \\
&= \sum_{k \notin \cB} \Pro_\infty \left( \max_{1 \leq s \leq t} W_s^k \geq h \right)  \\
&\leq \sum_{k \notin \cB} \sum_{s=1}^{t} \Pro_\infty(W_s^k \geq h) \\
&\leq t \;  (|\cN|-|\cB|) \,e^{-h},
\end{align*}
where the last inequality follows from \eqref{domin}. 
Moreover, 
setting $r=2$ in  \eqref{cusum_moment_approximation} we have  
$$
\Exp_0^{\cB}\left[ \min_{k \in \cB} \sigma^2_{k}(h) \right] \leq   \frac{h^2}{\cI^2} \; (1+o(1)).
$$
%where the asymptotic approximation follows by setting $r=2$ in  \eqref{cusum_moment_approximation}. 
In view of Lemma  \ref{newlemma}(ii) in the Appendix, these two inequalities prove \eqref{ren0}. Finally, we obtain  \eqref{ren00}    from
\begin{align*} 
\Pro_0^{\cB} \left( \min_{k \in \cB} \sigma_{k}(h)/h   \underset{h \rightarrow \infty} \longrightarrow 1/ \cI \right)=1
\end{align*}
and   Fatou's lemma,   the former following from  \eqref{cusum_as_approximation}. \\
\end{proof}

Based on these two lemmas, we can now show that the first-order asymptotic performance of the $L^{th}$ honest alarm is independent of $L$. This result was shown in \cite{sourabh_conf}  under the second-moment condition \eqref{second}, which is  removed  in the following theorem.\\

\begin{theorem}\label{th:one_shot_optimality}
Let  $1 \leq L  \leq  |\cN|$. If    $h=h_\gamma$ so that  $ \Exp_\infty[\sigma_{(L)}(h_\gamma)]=\gamma$, then as $\gamma \rightarrow \infty$
 \begin{equation} \label{ARL_one_shot_approximation}
h_\gamma \sim   \log \gamma.
\end{equation}
If also   $L \leq |\cB|$, then  
 \begin{equation} \label{ADD_one_shot_approximation}
  \cJ\left[\sigma_{(L)} (h_\gamma)\right] \sim  (\log \gamma)/\cI .
\end{equation}
\end{theorem}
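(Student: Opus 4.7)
The plan has two main steps, each leveraging one of the preceding lemmas and a small extra argument to convert from the $\Exp_\infty$/$\Exp_0^\cB$ formulation to the Lorden-style worst-case $\cJ$.

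First I would establish \eqref{ARL_one_shot_approximation}. Equation \eqref{ARL_one_shot} of Lemma \ref{lem: ARL_one_shot_and_voting} gives $\Exp_\infty[\sigma_{(L)}(h)] \sim \Theta(1)\, e^h$ as $h \to \infty$. Setting $\Exp_\infty[\sigma_{(L)}(h_\gamma)] = \gamma$ and taking logarithms yields $h_\gamma = \log \gamma - \log \Theta(1) + o(1)$, hence $h_\gamma \sim \log \gamma$.

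For \eqref{ADD_one_shot_approximation}, the key reduction is to show that the Lorden-style worst case coincides with the fresh-start expectation, that is,
\begin{equation*}
\cJ[\sigma_{(L)}(h)] \;=\; \Exp_0^\cB[\sigma_{(L)}(h)].
\end{equation*}
This is not a literal instance of Lemma \ref{lem:worst_case} because $\sigma_{(L)}$ depends on the individual stopping times $\sigma_k(h)$, which are functions of the past trajectory rather than of the current CUSUM values only. However, the same reasoning applies. Observe that $\sigma_{(L)}(h)$ is the first $t$ at which $\sum_{k \in \cN} \ind{\sigma_k(h) \leq t} \geq L$, so the state at time $t$ can be summarized by the pair (set of sensors that have already crossed $h$, vector of current CUSUM values of the remaining sensors). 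On $\{T > \nu\}$, enlarging the set of sensors that have already crossed can only shrink $T - \nu$, and raising any current CUSUM value can only shrink $T - \nu$ by monotonicity in the driving statistics $W^k$. Therefore, conditionally on $\cF_\nu$, the delay is essentially supremized by the scenario in which no sensor has yet crossed $h$ and $W^k_\nu = 0$ for every $k \in \cN$. Under $\Pro_\infty$, this state is a regeneration point for the joint CUSUM process, so, combined with time-homogeneity after the change, this identifies the worst case with the fresh-start $\nu = 0$ scenario, giving $\cJ[\sigma_{(L)}(h)] = \Exp_0^\cB[\sigma_{(L)}(h)]$.

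Finally I would combine these two pieces. Applying \eqref{ADD_one_shot} of the preceding lemma (valid because $L \leq |\cB|$) at $h = h_\gamma$ gives
\begin{equation*}
\cJ[\sigma_{(L)}(h_\gamma)] \;=\; \Exp_0^\cB[\sigma_{(L)}(h_\gamma)] \;\sim\; \frac{h_\gamma}{\cI} \;\sim\; \frac{\log \gamma}{\cI},
\end{equation*}
which is \eqref{ADD_one_shot_approximation}. I expect the main obstacle to be the worst-case reduction, since $\sigma_{(L)}$ falls just outside the literal scope of Lemma \ref{lem:worst_case}; however, the monotonicity-plus-regeneration argument above handles it cleanly, and the rest of the proof is a direct substitution of the asymptotic estimates already proved.
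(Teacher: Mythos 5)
Your proof is correct and follows the same route as the paper's: \eqref{ARL_one_shot_approximation} is read off from the asymptotic \eqref{ARL_one_shot}, and \eqref{ADD_one_shot_approximation} follows by substituting $h=h_\gamma$ into \eqref{ADD_one_shot} once the Lorden worst case is identified with the fresh-start scenario $\nu=0$. Your monotonicity-plus-regeneration argument for $\cJ[\sigma_{(L)}(h)]=\Exp_0^{\cB}[\sigma_{(L)}(h)]$ is a careful elaboration of a step the paper asserts in a single clause, and you are right that $\sigma_{(L)}$ falls outside the literal scope of Lemma \ref{lem:worst_case} (it depends on the running maxima of the local CUSUM statistics rather than on their current values), so the extra state variable recording which sensors have already crossed is genuinely needed to make the reduction rigorous.
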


\begin{proof}
We obtain \eqref{ARL_one_shot_approximation} directly from \eqref{ARL_one_shot}. We obtain  \eqref{ADD_one_shot_approximation} by setting $h=h_\gamma$ in \eqref{ADD_one_shot} and  the fact that worst-case scenario for the change-point is $\nu=0$. \\
\end{proof}

\begin{remark} \label{rem:one_shot_choice_L}
Theorem  \ref{th:lowsumcusum} reveals that the $L^{th}$ honest alarm has the same first-order asymptotic performance for any value of $L$ between 1 and $|\cB|$. In the absence of any information regarding the size of the affected subset,  $L$ needs to be set equal to 1. However,  Theorem  \ref{th:lowsumcusum} does not reveal how to select
 $L$  when  the size of the affected subset, $|\cB|$, is known in advance.  This question was addressed in \cite{sourabh_conf}, where it was shown, under the second moment assumption \eqref{second},  that the second-order term in the asymptotic expansion  of the detection delay of the $L^{th}$  alarm is a term of order $\sqrt{\log \gamma}$ whose coefficient is decreasing in $L$. This suggests setting  $L=1$  \textit{independently of  any  prior information regarding the size of the affected subset}. \\
\end{remark}

We  now  establish the asymptotic optimality of the voting rule, $S_L$, when the size of the affected subset is equal to $L$, without the second moment condition that was assumed in \cite{sourabh_conf}.   Moreover, we show that the exponential lower bound \eqref{ARL_voting} is sharp in the exponent. \\

\begin{theorem}\label{th:voting_optimality}
Let  $1 \leq L \leq  |\cN|$. If    $h=h_\gamma$ so that  $ \Exp_\infty[S_{L}(h_\gamma)]=\gamma$, then as $\gamma \rightarrow \infty$
 \begin{equation} \label{ARL_majority_approximation}
h_\gamma \sim   (\log \gamma)/L. 
\end{equation}
 If also  $L \leq |\cB|$, then 
 \begin{equation} \label{ADD_majority_approximation}
  \cJ\left[S_L (h_\gamma)\right] \sim  \frac{\log \gamma}{L \, \cI }.
\end{equation}
When in particular $L=|\cB|$, 
 \begin{equation} \label{majority_optimality}
\cJ\left[S_{|\cB|} (h_\gamma)\right] \sim \frac{\log \gamma}{|\cB| \, \cI } \sim \inf_{T \in \ccg} \cJ[T]. \\
\end{equation}
\end{theorem}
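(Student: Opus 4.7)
The plan is to establish \eqref{ARL_majority_approximation} first; once $h_\gamma \sim (\log\gamma)/L$ is in hand, both \eqref{ADD_majority_approximation} and \eqref{majority_optimality} follow quickly from the preceding lemma (specifically \eqref{ADD_one_shot}) and the optimality bound \eqref{opt_perf}.

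The direction $h_\gamma \leq (\log\gamma)/L \, (1+o(1))$ would be immediate from Lemma \ref{lem: ARL_one_shot_and_voting}: since $\gamma = \Exp_\infty[S_L(h_\gamma)] \geq \Theta(1) \, e^{L h_\gamma}$, we have $L h_\gamma \leq \log\gamma + \calo(1)$.

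For the matching lower bound on $h_\gamma$, rather than attempting a direct upper bound on $\Exp_\infty[S_L(h)]$---which would demand a delicate renewal/regeneration argument on the joint CUSUM process under $\Pro_\infty$ and appears to be precisely where the second-moment condition \eqref{second} is used in \cite{sourabh_conf}---my plan is to exploit the first-order optimality of the oracle CUSUM via a minimax argument. Fix any subset $\cB^{\star} \subseteq \cN$ with $|\cB^{\star}| = L$, which is possible since $L \leq |\cN|$. Since $L \leq |\cB^{\star}|$, \eqref{ADD_one_shot} combined with Lemma \ref{lem:worst_case} (applied with affected subset $\cB^{\star}$) gives $\cJ[S_L(h_\gamma)] = \Exp_0^{\cB^{\star}}[S_L(h_\gamma)] \leq (h_\gamma/\cI)(1+o(1))$. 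On the other hand, $S_L(h_\gamma) \in \ccg$ by construction, so \eqref{opt_perf} applied with $\cB = \cB^{\star}$ yields $\cJ[S_L(h_\gamma)] \geq (\log\gamma)/(L\cI)(1+o(1))$. Chaining these two inequalities produces $h_\gamma \geq (\log\gamma)/L \, (1+o(1))$, completing \eqref{ARL_majority_approximation}.

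With \eqref{ARL_majority_approximation} established, the hypothesis $L \leq |\cB|$ of the second claim lets me reapply \eqref{ADD_one_shot} and Lemma \ref{lem:worst_case} with the actual affected subset $\cB$ to conclude $\cJ[S_L(h_\gamma)] = \Exp_0^\cB[S_L(h_\gamma)] \sim h_\gamma/\cI \sim (\log\gamma)/(L\cI)$, which is \eqref{ADD_majority_approximation}. Specializing to $L = |\cB|$ and comparing with \eqref{opt_perf} gives \eqref{majority_optimality}. The main conceptual point---and the part that replaces the second-moment analysis of \cite{sourabh_conf}---is the use of \eqref{opt_perf} as a surrogate for a direct false-alarm estimate: the voting rule $S_L$ cannot outperform an oracle CUSUM that knows an affected subset of size exactly $L$, which forces its threshold to be at least of order $(\log\gamma)/L$.
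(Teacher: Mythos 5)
Your proposal is correct and follows essentially the same route as the paper: the upper bound on $h_\gamma$ comes from \eqref{ARL_voting}, the delay bound from \eqref{ADD_one_shot} together with Lemma \ref{lem:worst_case}, and the lower bound on $h_\gamma$ is forced by the asymptotic optimality bound \eqref{opt_perf} applied to an affected subset of size exactly $L$. The only cosmetic difference is that you chain the two inequalities directly, whereas the paper packages the same comparison as a proof by contradiction along a subsequence.
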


\begin{proof}
 Asymptotic approximation \eqref{ADD_majority_approximation} follows directly
from \eqref{ADD_one_shot} and \eqref{ARL_majority_approximation}, therefore it suffices to show the other two claims of the theorem.

For every $1 \leq L \leq |\cN|$ we have  from  \eqref{ARL_voting} that  as $\gamma \rightarrow \infty$
\begin{align} \label{show0}
 h_\gamma \leq   (\log \gamma)/L +\calo(1).
 \end{align}
  Therefore, setting $h=h_\gamma$ in  \eqref{ADD_one_shot} 
and recalling Lemma \ref{lem:worst_case}  we obtain 
 \begin{equation*}
  \cJ\left[S_L (h_\gamma)\right] \leq \frac{\log \gamma}{L \, \cI } (1+o(1))
\end{equation*}
whenever $1 \leq L \leq |\cB|$.  When in particular $L=|\cB|$, this asymptotic upper bound  coincides with  the optimal asymptotic performance \eqref{opt_perf}, and implies \eqref{majority_optimality}. 

In view of  \eqref{show0}, in order to establish  \eqref{ARL_majority_approximation}  it suffices to show that as $\gamma \rightarrow \infty$
$$
h_\gamma \geq (\log \gamma)/L((1+o(1)).
$$
% as $\gamma \rightarrow \infty$, or equivalently 
%$$
%\liminf_{h \rightarrow \infty} \frac{h_\gamma}{\log \gamma}  \geq  \frac{1}{L}.
%$$
We will prove this by contradiction. Indeed,   suppose there is a subsequence $h'_\gamma$ such that $h'_\gamma \leq  (\log \gamma)/L'$ as $\gamma \rightarrow \infty$ for some  $L'>L$. Then, whenever $L \leq |\cB|$,  from   \eqref{ADD_one_shot} we will have
$$ \cJ\left[S_L (h_\gamma)\right] \leq \frac{\log \gamma}{L' \, \cI } (1+o(1)),
$$
which  contradicts \eqref{majority_optimality} when $L=|\cB|$. \\
\end{proof}

\begin{remark} \label{rem:voting_L}
Suppose that it is known in advance that at least $Q$ sensors are affected, i.e., $|\cB| \geq Q$, where $Q$ is some known number between $1$ and $|\cN|$. Any value of $L$ between $1$ and $Q$ guarantees non-trivial detection delay for the corresponding voting rule,  but the resulting first-order asymptotic detection delay is now decreasing in $L$. This suggests setting $L$ equal to the largest possible value, i.e., $Q$. Note however that  due to the effect of the second-order term in the asymptotic approximation of the detection delay (see Remark \ref{rem:one_shot_choice_L}), it has been argued that a smaller value for $L$, such as  $L=\lceil Q/2 \rceil$ may lead to  better  performance  in practice \cite{sourabh_conf}.  %a conclusion supported by a second-order asymptotic analysis under the second moment condition \eqref{second}  in \cite{sourabh_conf}. %showed that the second-order term in the asymptotic expansion of the detection delay of $S_{L}$ is a term of order $\sqrt{\log \gamma}$,  decreasing in $L$. This implies that  that setting $L=[Q/2]$ may lead to  better, non-asymptotic performance  in practice, a conjecture that was supported numerically as well \cite{sourabh_conf}.  %In any case, it is fair to say that the more precise information is available regarding the size of the affected subset, the more efficient  the family of voting rules is relative to the  first-alarm.
\end{remark}
 
\subsection{Sum-CUSUM} \label{subsec:sum_cusum}
Let $\cC \subseteq \cN$ and denote by $\rho_{\cC}(h)$ the first time 
\textit{the sum of  the  local  CUSUM statistics in $\cC$} is above $h$, i.e., 
\begin{align} \label{sum}
\rho_{\cC}(h) = \inf\left\{t \in \bN: \sum_{k \in \cC} W_t^{k} \geq h\right\}.
 \end{align}  
This detection rule, to which we will refer as  \textit{Sum-CUSUM}, was  proposed in  \cite{mei_bio} and was shown to 
achieve the optimal performance to a first-order asymptotic approximation \eqref{opt_perf} for  any possible affected subset when $\cC=\cN$. In   Theorem \ref{th:sum_cusum_optimality} we characterize  the  first-order asymptotic performance of $\rho_\cC$ whenever $\cC$  intersects with the affected subset, $\cB$, and recover the result in \cite{mei_bio} as a special case. We note however that our  proof differs from that in  \cite{mei_bio} as far as it concerns the proof of the lower bound  in  \eqref{ALB}.   \\

 %althoug  does not require any information regarding the affected subset,  unlike one-shot schemes and voting rules.  Nevertheless, it was shown in  \cite{mei_bio}  to achieve the optimal asymptotic performance \eqref{opt_perf} for   any possible affected subset, $\cB$.  %Note that unlike the optimal CUSUM test, SUM-CUSUM does not require knowledge of the affected subset. Nevertheless, it  preserves the optimal asymptotic performance for any possible affected subset. 

\begin{lemma} \label{lem:sum}
For any $\cC \subseteq \cN$ we have  as $h \rightarrow \infty$
\begin{align} \label{ALB}
\Theta(1) \,  e^{h} \; h^{1-|\cC|}  \leq 
\Exp_\infty \left[ \rho_{\cC}(h) \right] &\leq \Theta(1) \,    e^{h}.
\end{align}
\end{lemma}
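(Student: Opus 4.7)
The upper bound is essentially free: fix any $k \in \cC$; then $\sum_{j \in \cC} W_t^j \geq W_t^k$ for every $t$, so $\rho_{\cC}(h) \leq \sigma_{k}(h)$ pathwise, and taking $\Exp_\infty$ together with the asymptotic $\Exp_\infty[\sigma_k(h)] \sim \Theta(1)\, e^h$ from \eqref{opt_ARL} yields the right-hand inequality. So the real content of the lemma is the lower bound, and my plan is to establish it via a fixed-time tail estimate followed by a simple union bound over time.

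The first and main step is to show that there is a constant $C>0$ so that, for all sufficiently large $h$ and every $t \in \bN$,
\begin{equation}\label{tailplan}
\Pro_\infty\!\left(\sum_{k \in \cC} W_t^{k} \geq h\right) \leq C\, h^{|\cC|-1}\, e^{-h}.
\end{equation}
Since $\cC \subseteq \cN$, the processes $\{W^k : k \in \cC\}$ are independent under $\Pro_\infty$, and by the case $\cC=\{k\}$ of \eqref{domin} each marginal satisfies $\Pro_\infty(W_t^k \geq x) \leq e^{-x}$ for every $x \geq 0$. To pass from these one-dimensional tails to \eqref{tailplan} I would set $T_k = \lceil W_t^k\rceil$, note that $\sum_k W_t^k \geq h$ forces $\sum_k T_k \geq h$, and then sum over compositions: by independence,
\begin{equation*}
\Pro_\infty\!\left(\textstyle\sum_k T_k = m\right) \;\leq\; \sum_{\substack{n_1+\cdots+n_{|\cC|}=m\\ n_k \in \bN_0}} \prod_{k \in \cC} \Pro_\infty(T_k = n_k) \;\leq\; \binom{m+|\cC|-1}{|\cC|-1} e^{-(m-|\cC|)},
\end{equation*}
using $\Pro_\infty(T_k=n)\leq e^{-(n-1)}$ for $n\geq 1$ and $\Pro_\infty(T_k=0)\leq 1$. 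Summing this geometric-times-polynomial tail from $m=h$ onward gives \eqref{tailplan}, with the polynomial factor $h^{|\cC|-1}$ arising precisely from the number of integer compositions of $h$ into $|\cC|$ non-negative parts.

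The second step is short: by a union bound over $t$,
\begin{equation*}
\Pro_\infty\!\left(\rho_{\cC}(h)\leq n\right) \;\leq\; \sum_{t=1}^n \Pro_\infty\!\left(\textstyle\sum_{k \in \cC} W_t^{k} \geq h\right) \;\leq\; n\,C\,h^{|\cC|-1}\,e^{-h}.
\end{equation*}
Choosing $n=n_h := \lfloor (2C)^{-1}\, e^{h} h^{1-|\cC|} \rfloor$ forces $\Pro_\infty(\rho_{\cC}(h)>n_h)\geq 1/2$ for all sufficiently large $h$, so that Markov's inequality (or $\Exp_\infty[\rho_{\cC}(h)] \geq n_h \Pro_\infty(\rho_{\cC}(h) > n_h)$) delivers $\Exp_\infty[\rho_{\cC}(h)] \geq \Theta(1)\, e^h\, h^{1-|\cC|}$, as required.

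The only delicate point is the combinatorial tail bound \eqref{tailplan}; everything else is bookkeeping. The main obstacle is making sure the polynomial correction $h^{|\cC|-1}$ appears with the correct power, which hinges on the discretization being loose enough that the one-dimensional exponential tail carries over to the sum without loss of rate, yet tight enough that the composition count is only polynomial. Since the lemma only asks for an asymptotic bound of order $\Theta(1)$, no sharp constant is needed and the rounding-up discretization is sufficient.
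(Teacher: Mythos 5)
Your proof is correct and follows essentially the same route as the paper's: a fixed-time tail bound of order $h^{|\cC|-1}e^{-h}$ for $\sum_{k\in\cC}W_t^k$, a union bound over $t$, and an elementary expectation lower bound (which the paper packages as Lemma \ref{newlemma}(i)). The only difference is in a sub-step: the paper obtains the tail bound by noting that, by \eqref{domin} and independence, the sum is stochastically dominated by an Erlang random variable with parameter $|\cC|$, whereas you re-derive the same polynomial-times-exponential tail by discretizing and counting integer compositions; both give the rate $H_{\cC}(h)\sim e^{-h}h^{|\cC|-1}/(|\cC|-1)!$.
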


\begin{proof}
By definition, $\rho_\cC(h) \leq \sigma_{(1)}(h)$ for every $h>0$, therefore the upper bound in \eqref{ALB} follows from \eqref{ARL_one_shot}. It remains to prove the lower bound.  From \eqref{domin} it follows that $W_s^k$ is stochastically bounded by an exponential random variable with mean 1 for every $s \in \bN$  and $k \in \cC$. As a result, $\sum_{k \in \cC}  W_s^{k}$ is stochastically bounded by an Erlang random variable with parameter $|\cC|$, i.e.,
\begin{align} \label{H}
\Pro_\infty \left( \sum_{k \in \cC}  W_s^{k} \geq h \right) 
 \leq H_{\cC}(h) \equiv  e^{-h} \sum_{j=0}^{|\cC|-1} \frac{h^j}{ j!}.
 \end{align}
For any $t \in \bN$ and $h>0$, from Boole's inequality  we have 
\begin{align} \label{q}
\begin{split}
\Pro_\infty(\rho_{\cC}(h)\leq  t) &= \Pro_\infty\left( \max_{1 \leq s \leq t} \sum_{k \in \cC}  W_s^{k} \geq h\right) \\
&\leq 
\sum_{s=1}^t \Pro_\infty\left( \sum_{k \in \cC}  W_s^{k} \geq h \right) \\
&\leq   t \, H_{\cC}(h),
\end{split}
 \end{align}
and  from Lemma \ref{newlemma}(i) in the Appendix we conclude that   for every $h>0$ we have 
$\Exp_\infty[ \rho_{\cC}(h) ] \geq  1/(2 H_{\cC}(h)).$
 From the definition of  $H_{\cC}$ in \eqref{H} we have   as $h \rightarrow \infty$
\begin{align} \label{qq}
H_{\cC}(h) \sim \frac{e^{-h} \; h^{|\cC|-1}}{(|\cC|-1)!},
 \end{align}
which implies the asymptotic lower bound in \eqref{ALB}. \\
\end{proof}

\begin{lemma} \label{lem:sum1}
If   $\cC \cap \cB \neq \emptyset$, then   as $h \rightarrow \infty$ 
 \begin{equation} \label{old334}
\Exp_{0}^{\cB} \left[ \rho_{\cC}(h) \right] \sim  \frac{h} { |\cC \cap \cB|  \cI}.
 \end{equation}

\end{lemma}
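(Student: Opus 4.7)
The plan is to sandwich $\Exp_0^\cB[\rho_\cC(h)]$ between two quantities each asymptotically equivalent to $h/(|\cC \cap \cB|\,\cI)$. For the upper bound, the decomposition \eqref{cusum_decomposition} gives $W_t^k \geq Z_t^k$ for every $k$ and $t$, while $W_t^k \geq 0$ always, so
\[
\sum_{k \in \cC} W_t^k \;\geq\; \sum_{k \in \cC \cap \cB} W_t^k \;\geq\; \sum_{k \in \cC \cap \cB} Z_t^k.
\]
This forces $\rho_\cC(h) \leq \tau(h) := \inf\{t \in \bN : \sum_{k \in \cC \cap \cB} Z_t^k \geq h\}$. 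Under $\Pro_0^\cB$, the process $\{\sum_{k \in \cC \cap \cB} Z_t^k\}_{t \in \bN}$ is a random walk with i.i.d.\ increments of finite positive mean $|\cC \cap \cB|\,\cI$, so the elementary renewal theorem applied to its ascending ladder process yields $\Exp_0^\cB[\tau(h)] \sim h/(|\cC \cap \cB|\,\cI)$ as $h \to \infty$, and hence the desired asymptotic upper bound on $\Exp_0^\cB[\rho_\cC(h)]$.

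For the matching lower bound, I would first establish $\rho_\cC(h)/h \to 1/(|\cC \cap \cB|\,\cI)$ $\Pro_0^\cB$-almost surely and then apply Fatou's lemma. The a.s.\ convergence reduces, through the definition of $\rho_\cC$, to showing that $t^{-1}\sum_{k \in \cC} W_t^k \to |\cC \cap \cB|\,\cI$ a.s. For each $k \in \cC \cap \cB$ the random walk $Z^k$ has positive drift $\cI$ under $\Pro_0^\cB$, so $\min_{s \leq t} Z_s^k$ is a.s.\ bounded; combined with the SLLN and $W_t^k = Z_t^k - \min_{s \leq t} Z_s^k$, this gives $W_t^k/t \to \cI$ a.s. For each $k \in \cC \setminus \cB$ the observations remain distributed according to $f$ under $\Pro_0^\cB$, so $W_t^k$ is the standard CUSUM driven by a random walk with strictly negative drift (Jensen applied to $\cI > 0$), which is stochastically bounded by the classical Lindley analysis; this together with $Z_t^k/t$ and $(\min_{s\leq t} Z_s^k)/t$ having the same a.s.\ limit implies $W_t^k/t \to 0$ a.s. Summing the limits over $k \in \cC$ delivers the claimed a.s.\ limit, and Fatou's lemma then yields $\liminf_{h \to \infty} \Exp_0^\cB[\rho_\cC(h)]/h \geq 1/(|\cC \cap \cB|\,\cI)$.

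The main subtlety is obtaining the sharp leading constant in the upper bound without appealing to the second-moment condition \eqref{second}: Wald's identity alone gives only $\Exp_0^\cB[\tau(h)] \geq h/(|\cC \cap \cB|\,\cI)$, which goes the wrong way, so the matching upper bound requires showing that the expected overshoot $\Exp_0^\cB\!\bigl[\sum_{k \in \cC \cap \cB} Z_{\tau(h)}^k\bigr] - h$ is $o(h)$. This is precisely what the elementary renewal theorem delivers under the bare first-moment condition \eqref{first}, so no strengthening of the integrability assumptions is needed; the remaining ingredients are standard applications of the SLLN and Fatou's lemma.
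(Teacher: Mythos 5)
Your proof is correct, and while it follows the same overall sandwich-plus-Fatou architecture as the paper's, the technical machinery is genuinely different at both ends. For the upper bound, the paper uses the intermediate inequality $W_t^{\cC\cap\cB}\leq\sum_{k\in\cC\cap\cB}W_t^k$ to get $\rho_{\cC}(h)\leq\sigma_{\cC\cap\cB}(h)$ and then cites the nonlinear-renewal-theory expansion \eqref{cusum_moment_approximation} for the centralized CUSUM; you instead drop down to the raw random walk via $W_t^k\geq Z_t^k$, bound $\rho_\cC(h)$ by the first-passage time of $Z^{\cC\cap\cB}$, and invoke the elementary renewal theorem for the ladder process. Your bound is slightly weaker as an inequality (your $\tau(h)\geq\sigma_{\cC\cap\cB}(h)$) but asymptotically just as sharp, and your observation that the overshoot is $o(h)$ in expectation under the bare first-moment condition \eqref{first} is exactly right --- this is the more elementary and self-contained route. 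For the lower bound, the paper establishes only convergence in probability of $\rho_\cC(h)/h$, controlling the unaffected sensors' contribution $\max_{s\leq t}\sum_{k\in\cC\setminus\cB}W_s^k$ via the exponential tail bound \eqref{domin} and a union bound (the estimate \eqref{q}), then applies Fatou; you aim for almost-sure convergence via the SLLN and drift arguments. Both suffice for Fatou.

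One small caveat on your treatment of $k\in\cC\setminus\cB$: your argument that $W_t^k/t\to 0$ a.s.\ rests on $Z_t^k/t$ and $(\min_{s\leq t}Z_s^k)/t$ sharing a common \emph{finite} negative limit, but the standing assumption \eqref{first} only controls $\Exp_g[\log(g/f)]$; the pre-change drift $\Exp_f[\log(g/f)]$ is negative by Jensen yet may equal $-\infty$, in which case your one-line justification does not directly apply. The conclusion still holds (truncate the increments from below, or follow the paper and use $\Pro_\infty(W_s^k\geq h)\leq e^{-h}$ with a union bound, which sidesteps drift considerations entirely), so this is a fixable gap in justification rather than a flaw in the approach.
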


\begin{proof}
For  every $t \in \bN$ we observe that
$$
W_t^{\cB \cap \cC } \leq \sum_{k \in \cB \cap \cC} W_t^k \leq 
\sum_{k \in \cC } W_t^k,
$$
therefore   for every $h>0$ we have
 \begin{align}  \label{path}
 \rho_{\cC}(h) \leq \sigma_{\cC \cap \cB}(h),
 \end{align}   
 and  from the asymptotic approximation \eqref{opt} we obtain 
$$\Exp_{0}^{\cB}[\rho_{\cC}(h)] \leq \Exp_{0}^{\cB}[\sigma_{\cC \cap \cB}(h)] \sim \frac{h}{|\cC \cap B| \, \cI}.
$$
It remains to show that 
$$
\Exp_{0}^{\cB}[\rho_{\cC}(h)] \geq \frac{h}{|\cC \cap B| \, \cI} \; (1+o(1)).
$$
This will  follow directly from Fatou's lemma  as soon as we prove 
that as $h \rightarrow \infty$
 \begin{align}  \label{as_sum}
\frac{\rho_{\cC}(h)}{h}  \overset{\Pro_0^{\cB} } \longrightarrow \frac{1}{|\cC \cap B|  \, \cI}.
\end{align}   
In view of  decomposition \eqref{cusum_decomposition}, 
for every $t \in \bN$ we have 
  $$
 \sum_{k \in \cC } W_t^k= \sum_{k \in \cC \cap \cB } Z_t^k+ \sum_{k \in \cC \cap \cB } m_t^k+ \sum_{k \in \cC \setminus \cB} W_t^k,
$$
and  \eqref{as_sum} will then follow from non-linear renewal theory, see, e.g.,  \cite[Lemma 2.6.1]{TNB_book2014}, if we show that 
$$\frac{1}{t} \, \max_{1 \leq s \leq t}  \sum_{k \in \cC \cap \cB} m_s^k \quad \text{and} \quad \frac{1}{t} \, \max_{1 \leq s \leq t}  \sum_{k \in \cC \setminus \cB} W_s^k
$$
converge to 0 in probability under $\Pro_0^\cB$ as $t \rightarrow \infty$.   The first one holds  because
for every  $k \in \cB$ the random walk $Z^k$ has positive drift $\cI$ (recall \eqref{first}), and as a result  for every  $t \in \bN$ we have  $0\ \leq  -m_t^k \leq -\min_{t \geq 0} Z_t^k<\infty$  almost surely under $\Pro_0^\cB$.  The second one holds because  from \eqref{q} we have for any $\epsilon>0$ that 
$$
\Pro_\infty  \left( \max_{1 \leq s \leq t} \sum_{k \in \cC \setminus \cB} W_s^k >t \epsilon \right) \leq t \, H_{\cC\setminus \cB}(t \epsilon),
$$
and the upper bound goes to 0 as $t \rightarrow \infty$  in view of \eqref{qq}.\\
\end{proof}

We now characterize the performance of  $\rho_C$ up to a first-order asymptotic approximation whenever $\cC$ intersects with $\cB$.\\ % achieves the optimal asymptotic performance whenever $\cB \subseteq \cC$. Clearly, this is the case for every affected subset $\cB$ when we set $\cC=\cN$. 

\begin{theorem} \label{th:sum_cusum_optimality}
Suppose   $\cC \cap \cB \neq \emptyset$. If  $h=h_\gamma$ is so that  $ \Exp_{\infty} \left[ \rho_{\cC}(h_\gamma)\right]=\gamma$, then
as $\gamma \rightarrow \infty$  we have 
 \begin{equation} \label{ARL_sum_approximation}
 h_\gamma  \sim \log \gamma,
\end{equation}
and 
 \begin{equation} \label{old3344}
 \cJ[\rho_{\cC}(h_\gamma)] \sim   \frac{\log \gamma} { |\cC \cap \cB|  \, \cI}.
 \end{equation}
 When in particular   $\cB \subseteq \cC$, we have as $\gamma \rightarrow \infty$
 \begin{equation} \label{old33}
 \cJ[\rho_{\cC}(h_\gamma)]  \sim \frac{\log \gamma} {|\cB|  \,\cI} \sim \inf_{T \in \ccg}\cJ[T] .
\end{equation}
\end{theorem}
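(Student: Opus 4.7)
The plan is to squeeze the threshold $h_\gamma$ between matching logarithmic bounds derived from Lemma \ref{lem:sum}, and then feed the resulting expression into Lemma \ref{lem:sum1} using the worst-case identity of Lemma \ref{lem:worst_case}. For the threshold asymptotic \eqref{ARL_sum_approximation}, I would first apply the upper bound $\Exp_\infty[\rho_\cC(h)] \leq \Theta(1)\, e^h$ from Lemma \ref{lem:sum} at $h=h_\gamma$, which gives $\gamma \leq \Theta(1)\, e^{h_\gamma}$, whence $h_\gamma \geq \log\gamma - \calo(1)$, i.e.\ $h_\gamma \geq \log\gamma\,(1+o(1))$. The matching upper bound on $h_\gamma$ comes from the lower bound in Lemma \ref{lem:sum}: $\gamma \geq \Theta(1)\, e^{h_\gamma} h_\gamma^{\,1-|\cC|}$, which after taking logarithms yields $\log\gamma \geq h_\gamma - (|\cC|-1)\log h_\gamma + \Theta(1)$. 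Because the already-established lower bound forces $h_\gamma \to \infty$, we have $\log h_\gamma = o(h_\gamma)$, so $h_\gamma \leq \log\gamma\,(1+o(1))$. Combining the two sides yields $h_\gamma \sim \log\gamma$.

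For the detection-delay asymptotic \eqref{old3344}, I would apply Lemma \ref{lem:worst_case} with $\psi(x_1,\ldots,x_{|\cN|}) = \sum_{k\in\cC} x_k$, which is nonconstant and nondecreasing in each coordinate, to obtain $\cJ[\rho_\cC(h)] = \Exp_0^\cB[\rho_\cC(h)]$ for every $h>0$. Substituting $h=h_\gamma$ into the asymptotic \eqref{old334} of Lemma \ref{lem:sum1} and using $h_\gamma \sim \log\gamma$ from the first step immediately gives $\cJ[\rho_\cC(h_\gamma)] \sim \log\gamma / (|\cC\cap\cB|\,\cI)$, which is \eqref{old3344}.

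Finally, when $\cB \subseteq \cC$ we have $|\cC\cap\cB| = |\cB|$, so \eqref{old3344} reduces to $\log\gamma/(|\cB|\,\cI)$; comparing with the first-order optimal bound \eqref{opt_perf} proves \eqref{old33}. The only real subtlety in the whole argument is the bootstrap at the threshold step: one must first establish $h_\gamma \to \infty$ from the upper side of Lemma \ref{lem:sum} before one is entitled to absorb $(|\cC|-1)\log h_\gamma$ into $o(h_\gamma)$ on the lower side. This order of operations is what promotes the polynomial gap $h^{|\cC|-1}$ between the two bounds in Lemma \ref{lem:sum} into a negligible correction on the logarithmic scale of $h_\gamma$.
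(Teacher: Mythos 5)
Your proof is correct and follows essentially the same route as the paper: squeeze $h_\gamma$ between the two bounds of Lemma \ref{lem:sum} by taking logarithms, then substitute $h_\gamma \sim \log\gamma$ into Lemma \ref{lem:sum1} and compare with \eqref{opt_perf}. Your explicit invocation of Lemma \ref{lem:worst_case} to identify $\cJ[\rho_\cC(h)]$ with $\Exp_0^{\cB}[\rho_\cC(h)]$, and your remark on first establishing $h_\gamma\to\infty$ before absorbing the $(|\cC|-1)\log h_\gamma$ term, merely spell out steps the paper leaves implicit.
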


\begin{proof}
From  \eqref{ALB} it follows that 
if we set $h=h_\gamma$ so that $\Exp_{\infty} [ \rho_{\cC}(h_\gamma)]=\gamma$, then  as $\gamma \rightarrow \infty$ we have 
$$
\Theta(1) \,e^{h_\gamma} \, h_\gamma^{1-|\cC|} \leq \gamma  \leq  \Theta(1) \, e^{h_\gamma}  \, (1+o(1)).
$$
Taking logarithms and dividing by $\log \gamma$ we obtain \eqref{ARL_sum_approximation}. The asymptotic approximation  in \eqref{old3344} follows from \eqref{old334}
and  \eqref{ARL_sum_approximation}.  Comparing the asymptotic upper bound \eqref{old3344} when $\cB\subseteq \cC$ with the
optimal asymptotic performance  in \eqref{opt_perf} we obtain \eqref{old33}.
\end{proof}

\subsection{Top-Sum-CUSUM}
Let $1 \leq L \leq |\cN|$ and denote by  $\widehat{S}_L(h)$  the first time  the \textit{sum of the $L$ largest} honest local CUSUM statistics is above $h$, i.e., 
\begin{align} \label{hat}
 \widehat{S}_{L}(h) = \inf\left\{t \in \bN: \sum_{k=1}^{L} W_t^{(|\cN|-k+1)} \geq h\right\}, 
 \end{align} 
 to which we  will refer  as  \textit{Top-Sum-CUSUM}. This detection rule  reduces to the \textit{first honest alarm}, $\sigma_{(1)}$,  when $L=1$,  and to  \textit{Sum-CUSUM}, $\rho_\cN$, when  $L=|\cN|$.  It has been  proposed  \cite{mei_sympo} as an efficient modification of  \textit{Sum-CUSUM}  when  the size of the affected subset  is known to be smaller or equal to $L$, i.e., $|\cB| \leq L$. Here, we analyze its asymptotic performance for any value of $L$. \\

\begin{lemma} \label{lem:ADD_top_sum_cusum}
Let $1 \leq L \leq |\cN|$.  Then, for any subset $\cB \subseteq \cN$ we have as $h \rightarrow \infty$
\begin{equation}  \label{hat00}
\Exp_0^{\cB} \left[\widehat{S}_{L}(h) \right] \sim  \frac{h }{\min\{L, |\cB|\} \; \cI }.
\end{equation}
%When in particular  all honest sensors are affected ($\cB=\cN$), then as  $h \rightarrow \infty$ we have 
%\begin{equation}  \label{hat0000}
%\cJN \left[\widehat{S}_{L}(h) \right] \sim  \frac{h}{L  \; \cI }. \\
%\end{equation}
\end{lemma}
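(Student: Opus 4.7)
Set $m := \min\{L, |\cB|\}$. The plan is to sandwich $\Exp_0^{\cB}[\widehat{S}_L(h)]$ between matching asymptotic upper and lower bounds of order $h/(m\cI)$.

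For the upper bound, observe that at every time $t$ the sum of the top $L$ honest CUSUM statistics dominates $\sum_{k \in \cC} W_t^k$ for every $\cC \subseteq \cN$ with $|\cC| \leq L$, and hence $\widehat{S}_L(h) \leq \rho_\cC(h)$ for every such $\cC$. Selecting $\cC \subseteq \cB$ with $|\cC| = m$, which is possible since $m \leq L$ and $m \leq |\cB|$, gives $|\cC \cap \cB| = m$, and Lemma \ref{lem:sum1} yields $\Exp_0^{\cB}[\widehat{S}_L(h)] \leq \Exp_0^{\cB}[\rho_\cC(h)] \sim h/(m\cI)$.

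For the lower bound, I would establish that the detection statistic $V_t := \sum_{k=1}^L W_t^{(|\cN|-k+1)}$ satisfies $V_t/t \to m\cI$ $\Pro_0^{\cB}$-a.s., then invert this into $\widehat{S}_L(h)/h \to 1/(m\cI)$ $\Pro_0^{\cB}$-a.s., and finally apply Fatou's lemma. The argument rests on two coordinate-wise limits. For each $k \in \cB$, the decomposition $W_t^k = Z_t^k + m_t^k$, combined with the SLLN ($Z_t^k/t \to \cI$) and the $\Pro_0^{\cB}$-a.s.\ finiteness of $m_t^k = -\min_{0 \leq s \leq t} Z_s^k$ (guaranteed by the positive drift of $Z^k$), yields $W_t^k/t \to \cI$ a.s. For each $k \in \cN \setminus \cB$, specializing \eqref{q} with $\cC = \{k\}$ gives $\Pro_\infty(\max_{1 \leq s \leq t} W_s^k \geq \epsilon t) \leq t\, e^{-\epsilon t}$ for every $\epsilon > 0$; summability in $t$ and the Borel--Cantelli lemma then force $W_t^k/t \to 0$ a.s.\ under $\Pro_0^{\cB}$. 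Ranking the $|\cN|$ ratios $W_t^k/t$, the top $m$ of them are contributed by sensors in $\cB$ and converge to $\cI$, while the remaining $L - m$ (if any) vanish, so the sum of the top $L$ satisfies $V_t/t \to m\cI$ a.s.

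The remaining step, the first-passage inversion from $V_t/t \to m\cI > 0$ a.s.\ to $\widehat{S}_L(h)/h \to 1/(m\cI)$ a.s., is handled pathwise: for any $\epsilon > 0$ the bounds $V_t \geq (m\cI - \epsilon)t$ and $V_t \leq (m\cI + \epsilon)t$ hold eventually in $t$, yielding $h/(m\cI + \epsilon) \leq \widehat{S}_L(h) \leq h/(m\cI - \epsilon)$ for all sufficiently large $h$, and $\epsilon \downarrow 0$ closes the gap. Fatou's lemma then gives $\liminf_h \Exp_0^{\cB}[\widehat{S}_L(h)]/h \geq 1/(m\cI)$, matching the upper bound. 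The main technical subtlety is that $V_t$ is a sum of order statistics rather than a monotone drift-plus-noise process, so classical nonlinear renewal theory does not apply verbatim; the clean workaround is the a.s.\ pathwise sandwich above, which succeeds because Borel--Cantelli provides an a.s.\ (not merely in-probability) negligibility statement for the unaffected CUSUMs already under the first-moment hypothesis \eqref{first}.
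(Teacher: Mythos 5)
Your proof is correct, and the lower-bound half takes a genuinely different route from the paper's. The paper writes $\widehat{S}_L(h)=\min_{|\cC|=L}\rho_{\cC}(h)$, splits this minimum into $\phi_L^{\cB}$ (over subsets meeting $\cB$) and $\chi_L^{\cB}$ (over subsets disjoint from $\cB$), controls $\phi_L^{\cB}$ via the in-probability limit \eqref{as_sum} plus Fatou, and then shows the $\chi$-part costs only $o(1)$ in expectation by invoking Lemma \ref{newlemma}(ii) together with a second-moment bound $\Exp_0^{\cB}[(\phi_L^{\cB})^2]=\calo(h^2)$ and the tail bound $t\,H_{\cN\setminus\cC}(h)$. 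You instead upgrade the negligibility of the unaffected CUSUMs to an \emph{almost sure} statement via Borel--Cantelli (the exponential bound \eqref{domin} is summable in $t$ at level $\epsilon t$), combine it with the SLLN and the a.s.\ boundedness of $m_t^k$ for $k\in\cB$ to get $V_t/t\to\min\{L,|\cB|\}\,\cI$ a.s.\ (most cleanly seen by writing $V_t=\max_{|\cC|=L}\sum_{k\in\cC}W_t^k$ and passing the a.s.\ limit through the finite maximum), and then do a pathwise first-passage inversion followed by Fatou. This bypasses both the $\phi/\chi$ decomposition and the expectation-comparison machinery of Lemma \ref{newlemma}(ii) entirely, at the price of not reusing \eqref{as_sum}; it is arguably the more self-contained argument, and it incidentally sidesteps the independence hypothesis that Lemma \ref{newlemma}(ii) places on the two stopping times. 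Your upper bound is essentially the paper's (choose a favorable subset and apply Lemma \ref{lem:sum1}), just stated more directly for $\widehat{S}_L$ rather than for $\phi_L^{\cB}$. Both arguments use only the first-moment condition \eqref{first}. The only cosmetic caveat is that the statement is vacuous for $\cB=\emptyset$, which both you and the paper implicitly exclude.
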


\begin{proof}
%From Lemma \ref{lem:worst_case}  we have $\cJ [\widehat{S}_{L}(h) ] =\Exp_0^{\cB}[ \widehat{S}_{L}(h) ]$ for every $h>0$. 
We observe that 
\begin{align} \label{repre}
\begin{split}
  \widehat{S}_{L}(h)   
  %&=  \inf\left\{t\in \bN: \max_{\cC \subseteq \cN: |\cC| = %L}  \, \sum_{k \in \cC} W_t^k \geq h \right\}  \\
  &=  \min_{\cC \subseteq \cN: |\cC| = L}  \rho_{\cC}(h),
  \end{split}
 \end{align}
 When $L> |\cN|-|\cB|$,  $\widehat{S}_{L}(h)$ can be expressed as follows
 $$\phi_{L}^{\cB}(h)=  \min_{\cC \subseteq \cN: \cC \cap \cB \neq \emptyset,  |\cC| = L}  \rho_{\cC}(h).
$$
 When $L\leq |\cN|-|\cB|$,  $\widehat{S}_{L}$  can be represented  as the minimum of two independent stopping times as follows: 
\begin{align} \label{repre2}
  \widehat{S}_{L}(h)   &=  \min\left\{ \phi_{L}^{\cB}(h), \chi_{L}^{\cB}(h)\right\},
  \end{align}    
  where
 $$\chi_{L}^{\cB}(h)=  \min_{\cC \subseteq \cN: \cC \cap \cB = \emptyset,  |\cC| = L}  \rho_{\cC}(h).
$$   
It then suffices to show that   as $h \rightarrow \infty$
\begin{align} \label{step1}
  \Exp_0^{\cB} \left[\phi_{L}^{\cB} \right]  \sim  \frac{h}
  {\min\{L, |\cB|\} \; \cI }    
  \end{align}    
and 
\begin{align} \label{step2}
\Exp_0^\cB \left[  \widehat{S}_{L}(h) \right]  =   \Exp_0^\cB \left[\phi_{L}^{\cB}(h) \right] -o(1).
    \end{align}
We start with the proof of \eqref{step1}.  For every  subset $\cC \subseteq \cN$  of size $L$ that intersects with $\cB$ we have  
\begin{align} \label{repre2}
 \Exp_0^{\cB} \left[ \phi^\cB_{L}(h) \right]  &\leq \Exp_0^{\cB} \left[ \rho_\cC(h) \right]  \sim 
     \frac{h} {|\cC \cap \cB| \, \cI},
 \end{align} 
 where the asymptotic equivalence follows from  \eqref{old334}. Minimizing the asymptotic upper bound with respect to $\cC$ we obtain  
 \begin{align*} 
   \Exp_0^{\cB} \left[ \phi^\cB_{L}(h) \right]    &\leq 
 %\min_{\cC \subseteq \cN :\cC \cap \cB \neq \emptyset,|\cC| = %L}   \frac{h} {|\cC \cap \cB| \, \cI} \; (1+o(1)) \\
  \frac{h}{\min\{L, |\cB|\} \; \cI } \; (1+o(1)),
 \end{align*} 
 since 
 \begin{align}  \label{bbb}
 \max_{\cC \subseteq \cN :  \cC \cap \cB \neq \emptyset, |\cC| = L} |\cC \cap \cB| = \min\{L, |\cB|\}.
 \end{align}
% Indeed, $|\cC \cap \cB| \leq \max\{|\cC|, |\cB|}$ and the upper bound is equal to $L$ when 
In order to prove \eqref{step1}, it remains to show that
 \begin{align*}
   \Exp_0^{\cB} \left[ \phi^\cB_{L}(h) \right]    &\geq   \frac{h}{\min\{L, |\cB|\} \; \cI } \; (1+o(1)).
 \end{align*} 
This follows   from Fatou's lemma  and 
$$ \frac{\phi_{L}^{\cB}(h)}{h} \overset{\Pro_0^{\cB}} \longrightarrow  \frac{1}{\min\{L, |\cB|\} \cI} ,
$$
the latter being a consequence of  \eqref{as_sum}  and \eqref{bbb}.

In order to prove \eqref{step2}, we focus without loss of generality on the case that $L\leq |\cN|-|\cB|$,  and we utilize  Lemma \ref{newlemma}(ii) in the Appendix. We observe that for every 
$\cC \subseteq \cN$ of size $L$ that intersects with  $\cB$,  we have 
$\phi_{L}^{\cB}(h) \leq   \rho_{\cC}(h) \leq \sigma_{\cC \cap \cB} (h)
$
for every $h$, where the second inequality follows from \eqref{path}. Therefore, setting $r=2$ in \eqref{cusum_moment_approximation} we obtain 
$$   \Exp_0^{\cB} \left[  \left(\phi^\cB_{L}(h) \right)^2 \right]   =\calo(h^2).
$$
Moreover,  for every $h>0$ we have  $\chi_{L}^{\cB}(h) \geq  \rho_{\cN \setminus\cB }(h)$,  and consequently for every $t \in \bN$ we obtain 
 \begin{align*}
\Pro_\infty \left( \chi_{L}^{\cB}(h) \leq t\right) &\leq \Pro_\infty \left( \rho_{\cN \setminus\cB }(h) \leq t\right)  \\
&\leq t \, H_{\cN\setminus \cC}(h),
 \end{align*} 
where the second inequality follows from \eqref{q}.  From the definition of $H_{\cN\setminus \cC}$ in \eqref{H} it follows that $h^2 H_{\cN\setminus \cC}(h)  \rightarrow 0$ as $h \rightarrow \infty$, which completes the proof of \eqref{step2}.  

\end{proof}

We now the  characterize, the  asymptotic performance of  \textit{Top-Sum-CUSUM}.

\begin{theorem}
Let $1 \leq L \leq |\cN|$. If    $h=h_\gamma$ is selected so that  $ \Exp_{\infty} [ \widehat{S}_{L} (h_\gamma)]=\gamma$,  then  as $\gamma \rightarrow \infty$ we have 
$h_\gamma \sim \log \gamma$  and 
\begin{equation}  \label{hat10}
\cJ \left[\widehat{S}_{L}(h_\gamma) \right] \sim  \frac{\log \gamma }{  \min\{|\cB|, L\} \,\cI }.
\end{equation}
When in particular  $L \geq |\cB|$, 
\begin{equation}  \label{hat11}
\cJ \left[\widehat{S}_{L}(h_\gamma) \right] \sim   \frac{\log \gamma}{  |\cB| \,\cI } \sim \inf_{T \in \ccg} \cJ[T].
\end{equation}

\end{theorem}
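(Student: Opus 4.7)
The plan is to first establish that $h_\gamma \sim \log \gamma$ by sandwiching $\Exp_\infty[\widehat{S}_L(h)]$ between two quantities that are exponential in $h$ up to a polynomial factor, and then to plug $h=h_\gamma$ into the already-proved Lemma \ref{lem:ADD_top_sum_cusum} and invoke Lemma \ref{lem:worst_case} so that $\cJ[\widehat{S}_L(h_\gamma)] = \Exp_0^{\cB}[\widehat{S}_L(h_\gamma)]$. The comparison with the optimal performance in \eqref{hat11} will then follow by matching \eqref{hat10} (with $L \geq |\cB|$) against \eqref{opt_perf}.

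For the upper bound on $\Exp_\infty[\widehat{S}_L(h)]$, I would use the pathwise inequality $\widehat{S}_L(h) \leq \sigma_{(1)}(h)$: at the first epoch at which any local CUSUM reaches $h$, the largest local CUSUM is at least $h$, so the sum of the top $L$ CUSUMs certainly exceeds $h$. Combined with the first-alarm approximation \eqref{ARL_one_shot} at $L=1$, this yields $\Exp_\infty[\widehat{S}_L(h)] \leq \Theta(1) e^h$. For the lower bound I would use representation \eqref{repre}, namely $\widehat{S}_L(h) = \min_{\cC \subseteq \cN, |\cC|=L} \rho_{\cC}(h)$, combined with a Boole-type union bound mirroring the proof of Lemma \ref{lem:sum}: for every $t \in \bN$,
\begin{equation*}
\Pro_\infty\bigl(\widehat{S}_L(h) \leq t\bigr) \leq \sum_{\cC:\,|\cC|=L} \Pro_\infty(\rho_{\cC}(h) \leq t) \leq \binom{|\cN|}{L}\, t\, H_{\cC}(h),
\end{equation*}
where $H_{\cC}(h)$ is defined in \eqref{H} for any fixed $\cC$ of size $L$, and the last inequality is \eqref{q}. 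Applying Lemma \ref{newlemma}(i) in the Appendix (exactly as in the proof of Lemma \ref{lem:sum}) together with the estimate \eqref{qq} gives $\Exp_\infty[\widehat{S}_L(h)] \geq \Theta(1)\, e^h\, h^{1-L}$.

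Setting $\Exp_\infty[\widehat{S}_L(h_\gamma)] = \gamma$ in these two bounds produces
\begin{equation*}
\Theta(1)\, e^{h_\gamma}\, h_\gamma^{1-L} \leq \gamma \leq \Theta(1)\, e^{h_\gamma},
\end{equation*}
and taking logarithms and dividing by $\log \gamma$ gives $h_\gamma \sim \log \gamma$. Substituting this into Lemma \ref{lem:ADD_top_sum_cusum} and recalling that $\cJ[\widehat{S}_L(h)] = \Exp_0^\cB[\widehat{S}_L(h)]$ by Lemma \ref{lem:worst_case} (since $\widehat{S}_L$ is defined via a coordinatewise-increasing function of the local CUSUMs) yields \eqref{hat10}. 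When $L \geq |\cB|$ we have $\min\{L,|\cB|\} = |\cB|$, so \eqref{hat10} coincides with the first-order optimal asymptotic \eqref{opt_perf}, which gives \eqref{hat11}.

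The only mildly nontrivial step is the lower bound on the mean false alarm, where the combinatorial factor $\binom{|\cN|}{L}$ and the polynomial factor $h^{L-1}$ are both absorbed into $\Theta(1)$ and $o(\log \gamma)$ respectively after taking logarithms; since these are exactly the analogues of the factors handled in Lemma \ref{lem:sum}, no new technique is required, and I do not anticipate any genuine obstacle beyond bookkeeping.
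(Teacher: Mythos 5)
Your proposal is correct and follows essentially the same route as the paper: sandwich $\Exp_\infty[\widehat{S}_L(h)]$ between $\Theta(1)e^h$ (via $\widehat{S}_L(h)\leq\sigma_{(1)}(h)$) and an exponential-times-polynomial lower bound to get $h_\gamma\sim\log\gamma$, then feed this into Lemma \ref{lem:ADD_top_sum_cusum} and compare with \eqref{opt_perf}. The only (immaterial) difference is in the lower bound: the paper simply uses the pathwise domination $\rho_{\cN}(h)\leq\widehat{S}_L(h)$ together with the already-proved bound \eqref{ALB}, obtaining the factor $h_\gamma^{1-|\cN|}$, whereas you rerun the Boole/Lemma \ref{newlemma}(i) argument with a union over size-$L$ subsets and get the slightly sharper factor $h_\gamma^{1-L}$ — both are absorbed after taking logarithms.
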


\begin{proof}
For every $h>0$ and $1 \leq L \leq |\cN|$ it is clear that 
$$\rho_{\cN}(h) \equiv  \widehat{S}_{|\cN|}(h) \leq  \widehat{S}_{L}(h)  \leq \widehat{S}_{1}(h) \equiv \sigma_{(1)}(h).
$$
From  \eqref{ARL_one_shot_approximation} and \eqref{ARL_sum_approximation} it follows that 
if we set $h=h_\gamma$ so that $\Exp_{\infty} [ \widehat{S}_{L} (h_\gamma)]=\gamma$, then  as $\gamma \rightarrow \infty$ we have 
$$
\Theta(1) \,e^{h_\gamma} \, h_\gamma^{1-|\cN|} \leq \gamma  \leq  \Theta(1) \, e^{h_\gamma}  \, (1+o(1)).
$$
Taking logarithms and dividing by $h_\gamma$ we obtain that $h_\gamma \sim \log \gamma$. This observation, combined with Lemma 
\ref{lem:ADD_top_sum_cusum}   implies \eqref{hat10}. Finally, \eqref{hat11} follows by comparing  \eqref{hat10} with the   optimal  asymptotic performance   \eqref{opt_perf} when $L\geq |\cB|$.
\end{proof}

\subsection{Low-Sum-CUSUM} \label{subsec:LowSumCUSUM}
Let $1 \leq L \leq |\cN|$ and denote by  $\widetilde{S}_L(h)$  the first time  the \textit{sum of the $L$ smallest honest CUSUM statistics} is above $h$, i.e.,  
\begin{align} \label{tilde}
 \widetilde{S}_{L}(h) = \inf\left\{t \in \bN: \sum_{k=1}^{L} W_t^{(k)} \geq h\right\},
 \end{align} 
 to which we will refer as  \textit{Low-Sum-CUSUM}. It is clear that $\widetilde{S}_{L}$ reduces to  the \textit{consensus rule}, $S_{|\cN|}$, when $L=1$, and  to  \textit{Sum-CUSUM}, $\rho_{\cN}$, when $L=|\cN|$. To the best of our knowledge, this procedure  has not been studied  when $1 < L < |\cN|$. In Theorem \ref{th:lowsumcusum} we  show that $\widetilde{S}_{L}$ is asymptotically optimal, \textit{for any choice of $L$}, when all honest sensors are affected by the change ($\cB=\cN$). This rule will turn out to be  well-suited to address the presence of corrupt sensors, which is our focus in the next section.\\

\begin{lemma} \label{ARL_low_sum_cusum}
Let $1 \leq L \leq |\cN|$.  As $h \rightarrow \infty$ we have
\begin{align}\label{ARL_lowsumcusum}
 \Exp_\infty \left[ \widetilde{S}_{L}(h) \right] &\geq \Theta(1) \,  \exp\{ (|\cN|  / L) h\}.
\end{align} 
\end{lemma}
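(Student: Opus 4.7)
The plan is to dominate $\widetilde{S}_L(h)$ from below by a Sum-CUSUM stopping time on all of $\cN$, but at a suitably inflated threshold, and then invoke the false-alarm analysis already carried out in Lemma \ref{lem:sum}.

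The crux is the following elementary order-statistics inequality: if the sum of the $L$ smallest of $|\cN|$ nonnegative numbers is at least $h$, then their total sum is at least $(|\cN|/L)\,h$. Indeed, since each of the $L$ smallest values is bounded above by $W_s^{(L)}$, the hypothesis forces $L\,W_s^{(L)} \geq h$, hence $W_s^{(L)} \geq h/L$; each of the remaining $|\cN|-L$ values is at least $W_s^{(L)} \geq h/L$, contributing at least $(|\cN|-L)\,h/L$ to the total, which is therefore bounded below by $h + (|\cN|-L)\,h/L = (|\cN|/L)\,h$. Applied pointwise in $s \in \bN$ to the local CUSUM statistics, this yields the pathwise comparison
$$\rho_\cN\bigl((|\cN|/L)\,h\bigr)\ \leq\ \widetilde{S}_L(h).$$

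Taking expectations under $\Pro_\infty$ and invoking the lower bound of Lemma \ref{lem:sum} with $\cC=\cN$ at the inflated threshold $(|\cN|/L)\,h$ then gives
$$\Exp_\infty[\widetilde{S}_L(h)]\ \geq\ \Exp_\infty\bigl[\rho_\cN((|\cN|/L)\,h)\bigr]\ \geq\ \Theta(1)\,e^{(|\cN|/L)\,h},$$
as claimed (the polynomial-in-$h$ correction $h^{1-|\cN|}$ inherited from Lemma \ref{lem:sum} is subleading with respect to the exponential growth captured by the statement, exactly as in the reading of $\Theta(1)$ used there). The only nonroutine step is the order-statistics inequality itself; once it is in hand, the reduction to Sum-CUSUM on $\cN$ at threshold $(|\cN|/L)\,h$ together with the proof of Lemma \ref{lem:sum} finishes the argument. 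This inequality is also what ensures the exponential rate comes out to $|\cN|/L$, rather than the weaker rate one would obtain by, say, bounding via the voting rule $S_{|\cN|-L+1}$ at threshold $h/L$.
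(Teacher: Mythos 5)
Your pathwise order-statistics inequality is correct and elegant: if $\sum_{k=1}^{L} W_s^{(k)} \geq h$ then $W_s^{(L)} \geq h/L$, the remaining $|\cN|-L$ statistics are each at least $h/L$, and hence $\sum_{k\in\cN} W_s^k \geq (|\cN|/L)h$, giving $\rho_{\cN}((|\cN|/L)h) \leq \widetilde{S}_L(h)$. The gap is in the last step. The lower bound of Lemma \ref{lem:sum} with $\cC=\cN$ at threshold $h'=(|\cN|/L)h$ is
\begin{equation*}
\Exp_\infty\bigl[\rho_{\cN}(h')\bigr] \;\geq\; \Theta(1)\, e^{h'}\,(h')^{1-|\cN|},
\end{equation*}
so your chain of inequalities delivers only $\Exp_\infty[\widetilde{S}_L(h)] \geq \Theta(1)\, e^{(|\cN|/L)h}\, h^{1-|\cN|}$. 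The factor $h^{1-|\cN|}$ tends to $0$ (for $|\cN|>1$) and cannot be absorbed into $\Theta(1)$: in this paper $\Theta(1)$ denotes a \emph{non-zero constant}, and indeed in Lemma \ref{lem:sum} itself the polynomial factor is written out explicitly rather than swallowed. So what you have proved is a strictly weaker statement than \eqref{ARL_lowsumcusum}. (It would still suffice for the first-order conclusion $h_\gamma \sim (L/|\cN|)\log\gamma$ in Theorem \ref{th:lowsumcusum}, since the loss is only logarithmic after taking $\log\gamma$, but it does not establish the lemma as stated.)

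The paper avoids this loss by not passing through the total sum at all. It applies Boole's inequality directly to $\Pro_\infty(\widetilde{S}_L(h)\leq t)$, stochastically dominates $(W_s^1,\ldots,W_s^{|\cN|})$ coordinatewise by $|\cN|$ independent $\mathrm{Exp}(1)$ variables using \eqref{domin}, and then computes the tail of the sum of the $L$ \emph{smallest} of $|\cN|$ i.i.d.\ exponentials via the R\'enyi representation (Lemma \ref{lem:asy}): for $L<|\cN|$ this sum equals in distribution $\sum_{j=1}^{L}\lambda_j\xi_j$ with \emph{distinct} coefficients $\lambda_j=(L-j+1)/(|\cN|-j+1)$, whose tail is purely exponential, $\sim C_1 e^{-(|\cN|/L)h}$, with no polynomial correction. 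That is precisely where the clean constant in \eqref{ARL_lowsumcusum} comes from. If you want to salvage your route, you would need a lower bound for $\Exp_\infty[\rho_\cN(h')]$ of order $e^{h'}$ without the polynomial factor, which Lemma \ref{lem:sum} does not provide.
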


\begin{proof}
For any $t \in \bN$ and $h>0$, from Boole's inequality  we have 
\begin{align*}
\Pro_\infty(\widetilde{S}_{L}(h) \leq  t) &= \Pro_\infty\left( \max_{1 \leq s \leq t} \sum_{k=1}^{L} W_s^{(k)} \geq h\right) \\
&\leq  \sum_{s=1}^t \Pro_\infty\left( \sum_{k=1}^{L} W_s^{(k)}  \geq h \right).
 \end{align*}
 From \eqref{domin} it follows that $W_s^{(k)}$ is stochastically bounded by an exponential random variable with mean 1, therefore $\sum_{k=1}^{L} W_s^{(k)} $ is stochastically bounded by the sum of the smallest $L$, among $|\cN|$, independent exponential random variables with mean 1. Therefore, from 
 Lemma \ref{lem:asy} in the Appendix we obtain 
\begin{align*}
\Pro_\infty\left( \sum_{k=1}^{L} W_s^{(k)}  \geq h \right) \leq  G_{L}(h)
 \end{align*}
 where $G_L$ is defined in \eqref{GG}. From
Lemma \ref{newlemma}(i) in the Appendix we  conclude that  $\Exp_\infty[ \widetilde{S}_{L}(h) ] \geq  1/(2 G_{L}(h))$ for every $h>0$, and  from Lemma \ref{lem:asy} in the Appendix we have   that as $h \rightarrow \infty$ 
\begin{align*}
  G_{L}(h) &\sim \Theta(1) \,  \exp\{ -(|\cN|/L) h \} ,
\end{align*}
which implies \eqref{ARL_lowsumcusum}. \\
\end{proof}

It is clear that  \textit{Low-Sum-CUSUM} has  non-trivial detection performance whenever  $L$ is larger than the number of non-affected sensors, $|\cN| -|\cB|$, so that there is always at least one term in the detection statistic that corresponds to a  local CUSUM statistic from an affected sensor. Under this assumption, we now characterize  its worst-case detection delay up to a first-order approximation. \\

\begin{lemma}  \label{ADD_lowsumcusum}
If $L > |\cN| -|\cB|$, then   as $h \rightarrow \infty$  we have
\begin{align}  \label{upper6}
\Exp_0^\cB \left[\widetilde{S}_{L}(h) \right] \sim \frac{h}{ 
(L-|\cN|+|\cB|) \; \cI}.
\end{align} 
\end{lemma}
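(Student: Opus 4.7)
The plan is to establish $\Exp_0^\cB[\widetilde{S}_L(h)] \sim h/(m\cI)$, with $m \equiv L - |\cN| + |\cB| \geq 1$, via matching asymptotic bounds on the expectation.

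For the upper bound, the key observation is a path-wise inequality relating the detection statistic of $\widetilde{S}_L$ to the ordered CUSUMs restricted to $\cB$. Letting $W_t^{[1]} \leq \ldots \leq W_t^{[|\cB|]}$ denote the ordered local CUSUMs among the affected sensors, I claim that
\begin{align*}
\sum_{k=1}^{L} W_t^{(k)} \; \geq \; \sum_{j=1}^m W_t^{[j]} \quad \text{for every } t \in \bN.
\end{align*}
The reason is combinatorial: of the $L$ smallest CUSUMs among the $|\cN|$ honest sensors, at most $|\cN|-|\cB| = L - m$ correspond to unaffected sensors, so at least $m$ of them come from $\cB$, and the $\cB$-indexed ones in this set are automatically (some of) the smallest affected CUSUMs; the remaining terms in $\sum_{k=1}^L W_t^{(k)}$ are non-negative. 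Consequently, whenever $W_t^k \geq h/m$ for every $k \in \cB$ we have $\sum_{j=1}^m W_t^{[j]} \geq h$ and hence $\sum_{k=1}^L W_t^{(k)} \geq h$, which yields
\begin{align*}
\widetilde{S}_L(h) \; \leq \; \max_{k \in \cB}\sigma_{k}(h/m) \; \leq \; \inf\{t \in \bN : Z_t^k \geq h/m, \; \forall\, k \in \cB\},
\end{align*}
the last inequality using $W_t^k \geq Z_t^k$. Applying Lemma \ref{lem:simultaneous} from the Appendix with level $h/m$ and subset $\cB$ then gives $\Exp_0^\cB[\widetilde{S}_L(h)] \leq (h/(m\cI))(1+o(1))$.

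For the matching lower bound, I would prove that $\widetilde{S}_L(h)/h \to 1/(m\cI)$ almost surely under $\Pro_0^\cB$ and invoke Fatou's lemma on the non-negative variables $\widetilde{S}_L(h)/h$. The almost-sure convergence relies on two facts: (i) for $k \in \cB$, decomposition \eqref{cusum_decomposition} together with the positive drift of $Z^k$ gives $W_t^k/t \to \cI$ a.s., and by a sandwich argument the same limit holds for every $W_t^{[j]}$; (ii) for $k \notin \cB$, the exponential tail bound \eqref{domin} combined with Borel--Cantelli yields $\sup_{s \leq t} W_s^k = O(\log t)$ a.s. Hence, for all sufficiently large (random) $t$, every unaffected CUSUM is strictly smaller than every affected one, so the $L$ smallest CUSUMs consist exactly of the $|\cN|-|\cB|$ unaffected ones together with the $m$ smallest affected ones, giving
\begin{align*}
\frac{1}{t}\sum_{k=1}^L W_t^{(k)} \; = \; \frac{1}{t}\sum_{k \notin \cB} W_t^k + \frac{1}{t}\sum_{j=1}^m W_t^{[j]} \; \longrightarrow \; 0 + m\cI \quad \text{a.s.}
\end{align*}
Therefore $\widetilde{S}_L(h)/h \to 1/(m\cI)$ a.s., and Fatou delivers $\liminf_h \Exp_0^\cB[\widetilde{S}_L(h)/h] \geq 1/(m\cI)$.

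The principal obstacle is the combinatorial path-wise inequality in the upper-bound argument: it is precisely this reduction from an order-statistic based rule to a simultaneous crossing problem over $\cB$ that makes Lemma \ref{lem:simultaneous} applicable. Once that reduction is in place, the remaining pieces---renewal-theoretic asymptotics of the individual CUSUMs and a Fatou argument---are standard.
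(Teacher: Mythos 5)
Your proof is correct in substance and follows essentially the same strategy as the paper's: the upper bound reduces $\widetilde{S}_L(h)$ to a simultaneous level-crossing problem for the random walks $Z^k$, $k\in\cB$, and invokes Lemma \ref{lem:simultaneous}, while the lower bound establishes a pathwise limit for $\widetilde{S}_L(h)/h$ and applies Fatou's lemma. The two differences are minor rather than structural. For the upper bound you apply Lemma \ref{lem:simultaneous} to the individual walks $Z^k$ at level $h/m$ (drift $\cI$), where $m=L-|\cN|+|\cB|$, whereas the paper applies it to the sums $\sum_{k\in\cC}Z^k_t$ over $m$-subsets $\cC\subseteq\cB$ at level $h$ (drift $m\cI$); both yield $h/(m\cI)$. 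For the lower bound you analyze the detection statistic directly and obtain almost-sure convergence (at the cost of a Borel--Cantelli argument from \eqref{domin} for the unaffected sensors), whereas the paper bounds $\widetilde{S}_L(h)$ from below by $\max_{\cC:\,\cC\cap\cB\neq\emptyset,\,|\cC|=L}\rho_\cC(h)$ and reuses the in-probability limit \eqref{as_sum} already established for Sum-CUSUM; either mode of convergence suffices for Fatou. One intermediate inequality in your chain is false as written: $\widetilde{S}_L(h)\leq\max_{k\in\cB}\sigma_k(h/m)$ does not follow, because $\sigma_k(h/m)$ is a \emph{first}-crossing time and $W^k$ may have fallen back below $h/m$ by the time the last sensor in $\cB$ first crosses, so the sum of the $m$ smallest affected CUSUMs need not exceed $h$ at that instant. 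What your preceding sentence actually establishes is $\widetilde{S}_L(h)\leq\inf\{t\in\bN: W_t^k\geq h/m\ \forall\, k\in\cB\}$, a simultaneous-crossing time, which is in turn bounded by $\inf\{t\in\bN: Z_t^k\geq h/m\ \forall\, k\in\cB\}$ since $W_t^k\geq Z_t^k$; so the endpoint of your chain, and hence the remainder of the argument, stands.
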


\begin{proof}
For any $t \in \bN$ we have 
\begin{align} \label{rep}
\sum_{k=1}^{L} W_t^{(k)}  &= \min_{\cC \subseteq \cN: |\cC| = L}  \, \sum_{k \in \cC} W_t^k.
\end{align}
When $L >|\cN| -|\cB|$,  a lower bound for the sum of the $L$ smallest honest CUSUM statistics is obtained when we set the CUSUM statistics in the $|\cN|-|\cB|$ unaffected sensors equal to 0. Then,  in view of \eqref{rep}, we have  for every $t \in \bN$ 
\begin{align*}
\sum_{k=1}^{L} W_t^{(k)} &\geq  \min_{\cC \subseteq \cB: |\cC| = L-|\cN|+|\cB|}  \, \sum_{k \in \cC} W_t^k  \\
&  \geq  \min_{\cC \subseteq \cB: |\cC| = L-|\cN|+|\cB|}  \, \sum_{k \in \cC} Z_t^k,
\end{align*}
where the second inequality holds because $Z_t^k \leq W_t^k$ for every $t$. As a result, for every $h>0$ we have 
\begin{align*}   
  \widetilde{S}_{L}(h)   &\leq  \inf\left\{t\in \bN: \min_{\cC \subseteq \cB: |\cC| = L-|\cN|+|\cB|}  \, \sum_{k \in \cC} Z_t^k \geq h \right\}.
 \end{align*}
%i.e.,   $\widetilde{S}_{L}(h)$ is bounded by the first time that 
%$ L-|\cN|+|\cB|$ random walks with drift $\cI$ are simultaneously above $h$. 
Then, from Lemma \ref{lem:simultaneous} in the Appendix it follows that 
  as $h \rightarrow \infty$ 
\begin{equation*} 
\Exp_0^{\cB} \left[\widetilde{S}_{L}(h) \right] \leq \frac{h}{ 
(L-|\cN|+|\cB|) \, \cI} \; (1+o(1)). 
\end{equation*}
In order to show that this asymptotic upper bound is sharp, we observe that from the definition of $\rho_\cC$ in \eqref{sum} and \eqref{rep} we have 
\begin{align}  \label{ineq}
\begin{split}
\widetilde{S}_{L}(h) &\geq \max_{\cC \subseteq \cN: |\cC| = L} \rho_{\cC}(h) \\
&\geq \max_{\cC \subseteq \cN: \cC \cap \cB \neq \emptyset, |\cC| = L} \rho_{\cC}(h) \equiv \psi_{L}^{\cB}(h).
\end{split}
\end{align}
From \eqref{as_sum} we know that for any subset $\cC$ that intersects with $\cB$ 
$$
 \frac{\rho_\cC(h)}{h} \overset{\Pro_0^{\cB}} \longrightarrow  \frac{1}{|\cC \cap \cB|\, \cI }, 
$$
and this implies 
$$
 \frac{\psi_{L}^{\cB}(h)}{h} \overset{\Pro_0^{\cB}} \longrightarrow \frac{1}{(L-|\cN|+|\cB|) \, \cI },
$$
since 
$$
\min_{\cC \subseteq \cN: \cC \cap \cB \neq \emptyset,|\cC| = L}
|\cC \cap \cB|=  L-|\cN|+|\cB|.
$$
Therefore, from Fatou's lemma   we obtain 
 \begin{align*} \Exp_0^{\cB} \left[ \psi^\cB_{L}(h) \right]  &\geq   \frac{h}{(L-|\cN|+|\cB|) \; \cI } \; (1+o(1)).
 \end{align*} 
 This asymptotic lower bound and  \eqref{ineq} imply 
 \begin{align*}
\Exp_0^{\cB} \left[\widetilde{S}_{L}(h) \right]  &\geq     \frac{h}{(L-|\cN|+|\cB|) \; \cI } \; (1+o(1)),
 \end{align*} 
which completes the proof. \\
\end{proof}

In the following theorem we show that   \textit{Low-Sum-CUSUM} preserves, \textit{for any choice of $L$}, the asymptotic optimality of the consensus rule $\widetilde{S}_{1}\equiv S_{|\cN|}$, when all honest sensors are affected by the change ($\cB=\cN)$. The asymptotic optimality property  allows us further to show that the exponential lower bound in \eqref{ARL_lowsumcusum} is sharp in the exponent.
% which is the content of Corollary \ref{coro4}. \\
%\end{remark}

\begin{theorem} \label{th:lowsumcusum}
Let $1 \leq L \leq |\cN|$  and  $h=h_\gamma$  so that  $ \Exp_{\infty} [ \widetilde{S}_{L} (h_{\gamma})]=\gamma$. Then,  
as $\gamma \rightarrow \infty$ we have 
\begin{equation}  \label{threshold_lowsum_approximation}
h_\gamma \sim (L/ |\cN|) \, \log \gamma.
\end{equation}
 Suppose  further that  $L > |\cN|-|\cB|$.  Then as $\gamma \rightarrow \infty$
\begin{align}  \label{perf_lowsumcusum}
\cJ \left[\widetilde{S}_{L}(h_\gamma) \right]  \sim
 \frac{L}{|\cN|} \; \frac{\log \gamma}{ (L-|\cN|+|\cB|) \, \cI}.
\end{align} 
In the special case that all honest sensors are affected ($\cB=\cN$),  for every $1 \leq L \leq  |\cN|$ we have
\begin{equation}  \label{optimality_lowsum_cusum}
\cJN \left[\widetilde{S}_{L}(h_\gamma) \right] \sim  \frac{\log \gamma}{  |\cN|\cI}  \sim \inf_{T \in \ccg}\cJN[T].
\end{equation}
\end{theorem}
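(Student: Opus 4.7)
The plan is to prove the three claims in the order \eqref{optimality_lowsum_cusum}, \eqref{threshold_lowsum_approximation}, \eqref{perf_lowsumcusum}, using the information-theoretic optimality of the oracle CUSUM as a pivot. The essential observation is that the exponential lower bound of Lemma \ref{ARL_low_sum_cusum} already pins $h_\gamma$ from above, and then the sharpness of the exponent is forced by the fact that $\widetilde{S}_L$ cannot beat the oracle in the fully-affected case.

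First I would extract the upper bound on the threshold from Lemma \ref{ARL_low_sum_cusum}. Since $\widetilde{S}_L(h_\gamma)\in\ccg$ with $\Exp_\infty[\widetilde{S}_L(h_\gamma)]=\gamma$, taking logarithms in \eqref{ARL_lowsumcusum} gives
\begin{equation*}
h_\gamma \leq (L/|\cN|)\,\log\gamma + \calo(1)\quad\text{as}\quad\gamma\to\infty.
\end{equation*}
Next I specialize Lemma \ref{ADD_lowsumcusum} to $\cB=\cN$ (for which $L>|\cN|-|\cB|=0$ holds trivially for every $L\geq 1$, and $L-|\cN|+|\cB|=L$). Noting that $\widetilde{S}_L$ is of the form required by Lemma \ref{lem:worst_case}, so $\cJN[\widetilde{S}_L(h_\gamma)]=\Exp_0^\cN[\widetilde{S}_L(h_\gamma)]$, the upper threshold bound yields
\begin{equation*}
\cJN\!\left[\widetilde{S}_L(h_\gamma)\right] \sim \frac{h_\gamma}{L\,\cI} \leq \frac{\log\gamma}{|\cN|\,\cI}\,(1+o(1)).
\end{equation*}

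The pivotal step is to observe that this upper bound matches the oracle asymptotic \eqref{opt_perf} with $\cB=\cN$. Since $\widetilde{S}_L(h_\gamma)\in\ccg$, the infimum in \eqref{opt_perf} supplies a matching asymptotic \emph{lower} bound, and consequently the two bounds must coincide, which establishes \eqref{optimality_lowsum_cusum}.

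Now I reverse the reasoning to obtain \eqref{threshold_lowsum_approximation}. Combining $\cJN[\widetilde{S}_L(h_\gamma)]\sim h_\gamma/(L\,\cI)$, which is still valid from Lemma \ref{ADD_lowsumcusum}, with the just-established equivalence $\cJN[\widetilde{S}_L(h_\gamma)]\sim \log\gamma/(|\cN|\,\cI)$, solving for $h_\gamma$ yields $h_\gamma\sim (L/|\cN|)\log\gamma$. Finally, \eqref{perf_lowsumcusum} in the general case $L>|\cN|-|\cB|$ follows immediately by substituting \eqref{threshold_lowsum_approximation} into \eqref{upper6} of Lemma \ref{ADD_lowsumcusum}, again using Lemma \ref{lem:worst_case} to identify $\cJ[\widetilde{S}_L(h_\gamma)]$ with $\Exp_0^\cB[\widetilde{S}_L(h_\gamma)]$.

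There is no genuine obstacle left after the two supporting lemmas: the only delicate point is the non-circular use of \eqref{opt_perf} to promote the one-sided threshold bound into a sharp asymptotic. Care must be taken that the matching lower bound on $\cJN[\widetilde{S}_L(h_\gamma)]$ is supplied \emph{externally} by the oracle optimality, rather than by Lemma \ref{ADD_lowsumcusum}, since the latter is the vehicle that later translates the sharp threshold into the general detection-delay formula.
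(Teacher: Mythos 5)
Your proposal is correct and follows essentially the same route as the paper: upper-bound $h_\gamma$ via Lemma \ref{ARL_low_sum_cusum}, combine with Lemma \ref{ADD_lowsumcusum} and Lemma \ref{lem:worst_case} to get an asymptotic upper bound on $\cJN$ that matches the oracle lower bound \eqref{opt_perf}, and then use the resulting optimality to pin down the exponent of the false-alarm bound. The only (cosmetic) difference is that you derive $h_\gamma \sim (L/|\cN|)\log\gamma$ by directly equating the two asymptotic expressions for $\cJN[\widetilde{S}_L(h_\gamma)]$, whereas the paper phrases the same step as a subsequence/contradiction argument.
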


\begin{proof}
Asymptotic approximation  \eqref{perf_lowsumcusum} follows directly by \eqref{upper6} and  \eqref{threshold_lowsum_approximation}, thus, we focus on the proof of the two other claims.  From  \eqref{ARL_lowsumcusum}  it follows that 
if $h=h_\gamma$  so that  $ \Exp_{\infty} [ \widetilde{S}_{L} (h_{\gamma})]=\gamma$, then as $\gamma \rightarrow \infty$
\begin{align}  \label{ub}
h_\gamma \leq (L/ |\cN|) \, \log \gamma +\calo(1).
\end{align}
 From Lemma \ref{lem:worst_case},  \eqref{upper6} and \eqref{ub}  we  have
 \begin{align*} 
\cJ \left[\widetilde{S}_{L}(h_\gamma) \right]  \leq
 \frac{L}{|\cN|} \; \frac{\log \gamma}{ (L-|\cN|+|\cB|) \, \cI} \;  (1+o(1)). 
\end{align*} 
Comparing this asymptotic upper bound with the optimal asymptotic performance \eqref{opt_perf} when $\cB=\cN$, we obtain the asymptotic optimality property \eqref{optimality_lowsum_cusum}.

In view of \eqref{ub}, in order to prove \eqref{threshold_lowsum_approximation} it suffices to show that   as  $ \gamma \rightarrow \infty$
$$
h_\gamma \geq (L/ |\cN|) \, (\log \gamma) \,   (1+o(1)).
$$  This will follow by arguing via contradiction. 
Indeed, if there is a subsequence $h'_\gamma$ so that $h'_\gamma \leq (L'/ |\cN|) \, \log \gamma $ as $\gamma \rightarrow \infty$ for some $L'>L$, then from Lemma  \ref{ADD_lowsumcusum} it follows that 
$$
\cJN[ \widetilde{S}_L (h_\gamma)] \leq \frac{L'}{L |\cN|}  \;(1+o(1)),
$$
which contradicts \eqref{optimality_lowsum_cusum}.
\end{proof}

%\end{proof}\begin{remark}From  \eqref{perf_lowsumcusum} it follows that the first-order asymptotic performance of the \textit{Low-Sum-CUSUM}  is decreasing in $L$, and setting $L$ equal to its largest possible value, $L=|\cN|$, we recover  the asymptotic optimality of the \textit{Sum-CUSUM}, $\rho_\cN$. \\\end{remark}
%\begin{corollary}\label{coro4}\end{corollary}\begin{proof}

\section{The Byzantine setup} \label{sec:robust} 

In this section we focus on the main theme of this paper, that is the design of multichannel, sequential  change-detection  procedures that are robust in the presence of  corrupt sensors.
% as long as the number of affected sensors is larger than $M$. 
% As we mentioned earlier, our focus is on the worst-case scenario that the number of corrupt sensors is the maximum possible, therefore we will assume from now on that $ K-  |\cN|=M \geq 1$. Moreover, we assume that the  number of affected sensors  is larger than the number of corrupt sensors, i.e.,   $|\cB|>M$. 

We  will still utilize the notation  introduced in Subsection \ref{subsec:classical_notation} when we refer  to events that depend only on  honest sensors. However, we will now need some additional notation when  we do not know whether the  sensors to which we refer are honest or corrupt.  Thus, for any  subset of sensors $\cC \subseteq [K]$  we denote  by $Y^\cC$  the CUSUM statistic for detecting a change in subset $\cC$, i.e., 
\begin{align} \label{YB}
  Y_t^\cC= \left( Y_{t-1}^\cC + \sum_{k \in \cC} \ell_t^k \right)^{+} , \quad t \in \bN, 
\end{align} 
where  $Y_0^\cC=0$ and $\ell_t^k$ is defined in \eqref{llr}. We denote by $\tau_{\cC}(h)$ be the first time the process $Y^\cC$  exceeds a positive threshold $h$, i.e.,  
\begin{align} \label{tB}
 \tau_{\cC}(h) = \inf\left\{t \in \bN: Y_t^{\cC} \geq h\right\}.
 \end{align} 
When $\cC=\{k\}$ for some $k \in [K]$, we simply write 
$Y_t^k$ and $\tau_{k}(h)$, instead of $ Y_t^{\{k\}}$ and  $\tau_{\{k\}}(h)$. Finally,  we use  the following notation for the ordered local CUSUM stopping times and statistics: 
\begin{align} \label{ordered_CUSUMS_byz}
& \tau_{(1)}(h) \leq \ldots \leq \tau_{(K)}(h),  \quad  Y_t^{(1)} \leq \ldots \leq Y_t^{(K)}.
\end{align} 
Our goal  is to design  procedures that  are able to detect the change quickly and reliably in the worst case scenario regarding the corrupt sensors, for any subset of honest sensors that perceive the change. 
To this end, we  assume that there is a user-specified upper bound, $M$, on the number of corrupt sensors,
and we focus our analysis on the worst possible case that there are exactly $M$ corrupt sensors, with the understanding that  the proposed procedures will still be able to detect the change reliably  when the actual number of corrupt sensors is smaller than $M$. Thus, from now on we have $|\cN|=K-M$, and consequently $1 \leq |\cB| \leq K-M$, since  $\cB \subseteq \cN$.  % We will see however that the proposed procedures will not be able to detect the change for every $\cB$, but only when  $|\cB|$ is large enough.

\subsection{The proposed procedures}

We will study three of the  families of multichannel detection schemes that we considered in the previous section: the $L^{th}$ alarm, $\tau_{(L)}(h)$, defined in \eqref{ordered_CUSUMS_byz},  the voting rule  
 \begin{equation} \label{general_voting}
 T_{L}(h)=\inf\left\{t \in \bN: Y_t^{(K-L+1)}  \geq h \right\},
 \end{equation} 
and  \textit{Low-Sum-CUSUM},
\begin{equation} \label{general_low_sum_cusum}
 \widetilde{T}_{L}(h)  =\inf\left\{t \in \bN: \sum_{k=1}^{L} Y_t^{(k)}  \geq h \right\},
 \end{equation} 
 where $L$ is some  number between $1$ and $K$.  Thus, $\tau_{(L)}(h)$ is the first time $L$ sensors, honest or not,  cross threshold $h$,  $T_{L}(h)$ is the first time $L$ CUSUM statistics, honest or not, are simultaneously above threshold $h$, and   $\widetilde{T}_{L}(h)$ is the first time  the sum of the $L$ smallest local,  honest or not,  CUSUM statistics is  larger than $h$.

% where $L$ will be assumed to be between larger than the number of corrupt sensors, $M$, and at most equal to the number of honest sensors.  

% It is important to stress that the three procedures under consideration require different kinds of information and can be implemented with different levels of communication activity from the sensors to the fusion center. In particular,  $\tau_{(M+1)}$ and $\widetilde{T}_{K-M}$ require knowledge of only $M$, the upper bound on the number of corrupt sensors, whereas  $T_{|\cB|}$ requires knowledge of the exact number of affected sensors. Moreover,   while the LowSumCUSUM requires transmission of the full value of the local CUSUM statistic from each sensor at each time, the voting rule requires from each sensor transmission of one bit at each time (declaring whether its local statistic is above or below $h$, whereas  the $(M+1)^{th}$ alarm strategy requires from each sensor to communicate only once once,as soon as its local CUSUM statistic exceeds $h$.

%This scheme assumes knowledge of $M$, the number of corrupt sensors, but does not assume any knowledge of the size of the affected subset, $|\cB|$. 
 
  %Note also that $T_{|\cB|}$ can be implemented in a decentralized fashion.  
 
 %On the other hand, when only  non-trivial Similarly to  the $M+1$ alarm, this detection rule  require knowledge of the number of corrupt sensors, $M$, but not the number of  affected sensors, $|\cB|$. However, unlike the  $M+1$ alarm, $|\cN|$-low-Sum-CUSUM can only be implemented in a centralized fashion. 
 
 \begin{remark}
 We do not consider \textit{Top-Sum-CUSUM} in this context, because with any rule of this form the adversary can  trigger unilaterally false alarms before the change, violating the desired false alarm control. 
 \end{remark}
 
\subsection{Preliminary results}

The following lemma is  important for the subsequent development, as it represents the operating characteristics of the proposed procedures in terms of  operating characteristics of schemes that involve only honest sensors, thus, allowing  us to use the results from the previous section. It also reveals that only values of $L$ larger than $M$ are relevant for all three schemes. \\   %Moreover,  it leads to  upper/lower bounds for the parameter $L$ for the proposed rules to be able to detect the change. 

\begin{lemma} \label{non-asy}
Suppose that $L> M$. Then,  for every threshold $h>0$ and  subset $\cB \subseteq \cN$ we have 
  \begin{align} \label{repre_oneshot}
  \begin{split}
      \cA \left[ \tau_{(L)} (h)\right] &=  \Exp_\infty\left[ \sigma_{(L-M)}(h)\right],  \quad \\
       \cJ \left[\tau_{(L)}(h)\right] &=  
 \Exp_{0}^{\cB} \left[ \sigma_{(L)}(h)\right] ,
\end{split}
 \end{align}

   \begin{align}    \label{repre_voting}
     \begin{split}
      \cA\left[ T_{L}(h)\right] &=\Exp_\infty[S_{L-M}(h)], \\ \cJ\left[T_{L}(h)\right]  &=\Exp_{0}^{\cB} \left[ S_{L}(h)\right],  \\
      \end{split}
      \end{align}
   and 
  \begin{align}  \label{repre_lowsumcusum}
    \begin{split}
 \cA \left[ \widetilde{T}_{L}(h)\right] &=\Exp_\infty \left[\widetilde{S}_{L}(h)  \right], \\
 \quad\cJ\left[\widetilde{T}_{L}(h)\right] & =\Exp_{0}^{\cB} \left[\widetilde{S}_{L-M}(h)\right].
 \end{split}
\end{align}

\end{lemma}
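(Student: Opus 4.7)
My plan is to prove each of the three pairs of identities by a common two-step recipe: first a pathwise inequality that holds for \emph{every} admissible adversarial strategy $\pi$, and then an explicit extremal strategy $\pi^*$ that makes the bound tight. The conversion from a supremum over $(\nu,\pi)$ to an expectation under $\Pro_0^\cB$ in the detection-delay identities is then an application of Lemma \ref{lem:worst_case}: for each specific $\pi^*$ I construct, the adversary's observations are $\cH_t$-measurable, so pre-change the $K$-dimensional CUSUM vector is a Markov chain that regenerates at the all-zero state, and the worst case is attained at $\nu=0$ with all $Y_\nu^k=0$.

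The pathwise inequalities come from elementary counting and ordered-statistics facts. For the $L^{\text{th}}$ alarm, among the first $L$ sensors to cross $h$, at most $M$ can be corrupt and at least $L-M$ must be honest, which yields $\sigma_{(L-M)}(h) \leq \tau_{(L)}(h) \leq \sigma_{(L)}(h)$ (the upper inequality because any honest crossing is also a crossing overall). The same counting applied to simultaneous crossings gives $S_{L-M}(h) \leq T_L(h) \leq S_L(h)$. For \textit{Low-Sum-CUSUM} I would use two ordered-sum inequalities: (i) among the $L$ smallest values of $(Y_t^1,\ldots,Y_t^K)$ at least $L-M$ are honest and each is bounded below by the correspondingly ranked honest value, so $\sum_{k=1}^L Y_t^{(k)} \geq \sum_{k=1}^{L-M} W_t^{(k)}$ and hence $\widetilde{T}_L(h) \leq \widetilde{S}_{L-M}(h)$; (ii) the sum of the $L$ smallest entries of a set weakly decreases when extra non-negative values are added to the set, so $\sum_{k=1}^L Y_t^{(k)} \leq \sum_{k=1}^L W_t^{(k)}$ and hence $\widetilde{T}_L(h) \geq \widetilde{S}_L(h)$.

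The matching extremal strategies are the natural ones. To tighten the $\cA$ bounds, the adversary pushes each corrupt CUSUM as high as possible: for $\tau_{(L)}$ and $T_L$ it makes each $Y_1^k$ exceed $h$ at the first time step (forcing $M$ free alarms); for $\widetilde{T}_L$ it drives the corrupt CUSUMs to $+\infty$, so that the $L$ smallest among the $K$ are precisely the $L$ smallest honest ones. To tighten the $\cJ$ bounds, the adversary pins each corrupt $Y_t^k$ at $0$ forever, which yields pathwise $\tau_{(L)}=\sigma_{(L)}$, $T_L=S_L$, and $\widetilde{T}_L=\widetilde{S}_{L-M}$. In either case the strategy depends only on past honest observations, Lemma \ref{lem:worst_case} applies, and the claimed equalities follow after taking expectation.

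The main obstacle is that the adversary does not control the CUSUM statistics directly, only the observations $X_t^k$, and induces $Y_t^k$ through the likelihood ratio $\ell_t^k=\log(g(X_t^k)/f(X_t^k))$. When $\log(g/f)$ has essential range unbounded above and below, which is the typical setting, a single time step suffices either to push $Y_t^k$ above $h$ or to drive and keep it at zero. If the range is bounded, the same targets are only approachable as limits, so the identities would have to be interpreted as limits over sequences of admissible strategies; this technicality does not affect any of the asymptotic applications in the remainder of the paper.
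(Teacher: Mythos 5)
Your overall route is the same as the paper's: two-sided pathwise bounds valid for every admissible $\pi$, an extremal adversary saturating each bound, and the Lorden/regeneration argument (Lemma \ref{lem:worst_case}) to convert the $\sup_{\nu,\pi}\essup$ into $\Exp_0^{\cB}$. Your pathwise inequalities are all correct (including the two ordered-sum inequalities for \textit{Low-Sum-CUSUM}), and your delay-side extremal strategy --- pin each corrupt CUSUM at $0$ by always emitting a point $x_0$ with $g(x_0)\le f(x_0)$, which exists for any $f\ne g$ --- is exactly the paper's and is always implementable. The one place you diverge, and where your argument is weaker than it needs to be, is the false-alarm side: you have the adversary push each corrupt statistic above $h$ in one step (or ``to $+\infty$''), which forces the caveat in your last paragraph about bounded $\log(g/f)$ and equalities holding only as limits. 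The paper's worst case is instead the \emph{domination} scenario $\min_{k\notin\cN}Y_t^k\ge\max_{k\in\cN}W_t^k$ for all $t$, and this is achievable \emph{exactly} for arbitrary $f,g$ by an echo strategy: at each time $t$ the adversary sets $X_t^k=X_t^{j_t}$ where $j_t=\arg\max_{j\in\cN}\ell(X_t^j)$, so that $\ell_t^k=\max_j\ell_t^j$ and, by induction from $Y_0^k=0$, $Y_t^k\ge W_t^j$ for every honest $j$. Under domination one gets $\tau_{(L)}=\sigma_{(L-M)}$, $T_L=S_{L-M}$, and (for $L\le K-M$) the $L$ smallest of the $K$ statistics are precisely the $L$ smallest honest ones, so $\widetilde{T}_L=\widetilde{S}_L$, all pathwise and with no unboundedness assumption. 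Replacing your ``instant crossing'' adversary by this echo adversary removes the final caveat entirely and yields the exact equalities claimed in the lemma, not just their asymptotic versions. (One shared loose end, present in the paper as well: Lemma \ref{lem:worst_case} is stated for rules of the form $\inf\{t:\psi(W_t^1,\ldots,W_t^{|\cN|})\ge h\}$, whereas $\sigma_{(L)}$ depends on running maxima; the regeneration argument still goes through, but strictly speaking it needs the extended statement.)
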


\begin{proof}
For simplicity, we suppress the dependence on  threshold $h$.
As far as it concerns the false alarm rate of the proposed schemes, the  worst-case scenario regarding the data in the  corrupt sensors is when  the CUSUM statistics from the corrupt sensors are never smaller than the ones from the honest sensors, i.e.,
\begin{equation*} 
\min_{k \notin \cN} Y_t^k \geq \max_{k \in \cN} Y_t^k, \quad \forall \; t \in \bN.
\end{equation*}
%in which case all corrupt sensors raise an alarm before any honest sensor does so, i.e.,$\max_{k \notin \cN} \sigma_k \leq \min_{k \in \cN} \sigma_k$.
In this case, $\tau_{(L)}$ coincides with the $(L-M)$- \textit{honest} alarm, i.e., $\tau_{(L)}=\sigma_{(L-M)}$,  
$T_{L}$  stops as soon as $L-M$ of the \textit{honest} CUSUM statistics are  simultaneously  above $h$, i.e., $T_{L} = S_{L-M}$, and  $\widetilde{T}_{L}$  stops when the sum of the lowest $L$ \textit{honest} CUSUM statistics is  above $h$, i.e.,  $\widetilde{T}_{L}= \widetilde{S}_{L}$. 

 %Since $S_{K-2M}$ stops when $K-2M$  honest CUSUM statistics are \textit{simultaneously} above $h$, and $\sigma_{(K-2M)}$ as soon as there have been $K-2M$ alarms in the honest sensors, we have  $S_{K-2M} \geq \sigma_{(K-2M)}$. Moreover,  $\widetilde{S}_{K-M}$ stops as soon as  the sum of all honest CUSUM statistics exceeds $h$ and $\sigma_{(1)}$ as soon as  the  maximum honest CUSUM statistic exceeds $h$, thus, This implies $$ \Exp_\infty[\widetilde{S}_{K-M}] \leq \Exp_\infty[\sigma_{(1)}] \leq \Exp_\infty[\sigma_{(K-2M)}] \leq \Exp_\infty[S_{K-2M}],$$ and consequently \eqref{ineq}.

As far as it concerns  the detection delay of the proposed rules, the  worst case scenario regarding the data in the corrupt sensors is when
$$Y_t^k=0, \quad \forall \; k \notin \cN, \quad \forall \; t \in \bN.
$$
% and consequently $\tau_k=\infty$ for every $k \notin \cN$.
Then, $\tau_{(L)}$ stops  as soon as  $L$ \textit{honest} sensors have raised an alarm, i.e., $\tau_{(L)}=\sigma_{(L)}$, $T_{L}$ stops at the first time  $L$ \textit{honest} CUSUM statistics are simultaneously above $h$, i.e.,  $T_{L}= S_{L}$,  whereas  $\widetilde{T}_{L}$ stops  when the sum of the $L-M$ lowest \textit{honest} CUSUM statistics crosses $h$,  i.e.,  $\widetilde{T}_{L}= \widetilde{S}_{L-M}$. In view of Lemma \ref{lem:worst_case},  this completes the proof.

%for the history from the honest sensors  before the change is that  $W_\tau^k=0$ for every $k \in \cN$.  Moreover, since the vector process $(W^k, k \in \cN)$ restarts afresh when all local CUSUMs hit 0 simultaneously, 
%Finally,  in order to prove \eqref{ineq2}, it suffices to observe that  $\sigma_{(M+1)} \leq  \sigma_{(|\cB|)}   \leq  S_{|\cB|}$. % \geq S_{K-2M} \geq \widetilde{S}_{K-2M}$.
\end{proof}

\subsection{The range of L}

From \eqref{repre_oneshot}-\eqref{repre_voting}  it follows that for   $\tau_{(L)}$  and $T_{L}$  to control the worst-case false alarm rate, $L$ needs to be larger than $M$, and for   $\tau_{(L)}$  and $T_{L}$ to have non-trivial detection delay, $L$ needs to be at most equal to the size of the affected subset, $|\cB|$,  or equivalently   at least $L$ honest sensors need to be affected by the change.  Thus,  for these detection rules we will require that 
\begin{equation} \label{always1}
M+1 \leq L \leq |\cB| \leq K-M,
\end{equation}
where the last inequality always holds because $|\cB| \leq |\cN| = K-M$. 

 On the other hand, from  \eqref{repre_lowsumcusum} it follows that  for  \textit{Low-Sum-CUSUM}, $\widetilde{T}_{L}$,  to control the worst case false alarm rate, $L$ needs to be at most equal to the number of honest sensors, $K-M$ (otherwise, a  corrupt CUSUM statistic will always be included in the detection statistic), and
 for  $\widetilde{T}_{L}$  to have non-trivial detection delay,  we need  not only $L>M$, but also that  $L-M > |\cN|- |\cB|$, or equivalently $L > K-|\cB|$ (see  the  discussion prior to Lemma \ref{ADD_lowsumcusum}). Consequently, for  $\widetilde{T}_{L}$ we  require that
\begin{equation} \label{always2}
K+1 - |\cB| \leq  L \leq   K-M.
\end{equation}
This condition implies that for $\widetilde{T}_{L}$ to detect the change, the size  of the affected subset must satisfy  
$$
K-M \geq|\cB| \geq K+1-L \geq M+1,
$$
where again the first inequality  always holds.

%\begin{remark}
%When we set  $L$ equal to its largest possible value, $|\cN|\equiv K-M$, then \eqref{always2} reduces to \eqref{always1}. We will see that this will turn out to the most  statistically efficient choice for $L$.
%\end{remark}

 \subsection{A special case}
 In view of conditions \eqref{always1}-\eqref{always2},  we focus on the case that   $M+1 \leq |\cB|  \leq |\cN|= K-M$.  A particular case of interest is when  $K-M =M+1$, in which case all honest sensors are affected $(\cB=\cN)$.  Then, conditions  \eqref{always1}-\eqref{always2} imply that  the only possible value  for $L$ is  $M+1$ for all three schemes, and Theorem \ref{non-asy} reveals  a clear ordering for these schemes. This is the  content of Corollary \ref{corol}  below, for which we use the notion of domination in Definition \ref{def1}. \\

\begin{corollary} \label{corol}
Suppose that all honest sensors are affected $(\cB=\cN)$ and that  $|\cN|\equiv K-M = M+1$. Then, the $(M+1)$-alarm,  $\tau_{(M+1)}$, dominates the voting rule, $T_{M+1}$, and the latter   dominates  \textit{Low-Sum-CUSUM}, $\widetilde{T}_{M+1}$. \\
\end{corollary}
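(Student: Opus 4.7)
The plan is to instantiate the representations of Lemma \ref{non-asy} with $L=M+1$ under the structural assumptions $|\cN|=K-M=M+1$ and $\cB=\cN$, reduce all six operating characteristics (three $\cA$'s and three $\cJ$'s) to quantities built from the classical (honest-only) CUSUM schemes, and then compare the schemes at matched thresholds using pointwise inequalities among those classical stopping times.

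First I would record the six reductions. With $L=M+1$, Lemma \ref{non-asy} gives $\cA[\tau_{(M+1)}(h)]=\Exp_\infty[\sigma_{(1)}(h)]$ and $\cJ[\tau_{(M+1)}(h)]=\Exp_0^{\cN}[\sigma_{(M+1)}(h)]$. For the voting rule, $\cA[T_{M+1}(h)]=\Exp_\infty[S_1(h)]$ and $\cJ[T_{M+1}(h)]=\Exp_0^{\cN}[S_{M+1}(h)]$; but $S_1=\sigma_{(1)}$, so the first formula becomes $\Exp_\infty[\sigma_{(1)}(h)]$. For Low-Sum-CUSUM, $\cA[\widetilde{T}_{M+1}(h)]=\Exp_\infty[\widetilde{S}_{M+1}(h)]$ and $\cJ[\widetilde{T}_{M+1}(h)]=\Exp_0^{\cN}[\widetilde{S}_1(h)]$. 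Because $|\cN|=M+1$, $\widetilde{S}_{M+1}=\rho_{\cN}$ (sum of all honest CUSUMs) and $\widetilde{S}_1=S_{|\cN|}=S_{M+1}$ (smallest honest CUSUM above $h$ is equivalent to all of them being above $h$). Thus $\cA[\widetilde{T}_{M+1}(h)]=\Exp_\infty[\rho_{\cN}(h)]$ and $\cJ[\widetilde{T}_{M+1}(h)]=\Exp_0^{\cN}[S_{M+1}(h)]$.

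For the first domination, $\tau_{(M+1)}$ and $T_{M+1}$ have identical $\cA$ as functions of $h$, namely $\Exp_\infty[\sigma_{(1)}(h)]$. Hence for any pair of thresholds with $\cA[T_{M+1}(h_T)]\le \cA[\tau_{(M+1)}(h_\tau)]$, the monotonicity of $\Exp_\infty[\sigma_{(1)}(\cdot)]$ in $h$ gives $h_T\le h_\tau$, and in fact the only ``matched'' choice is $h_T=h_\tau$. Applying the sample-path inequality $\sigma_{(M+1)}(h)\le S_{M+1}(h)$ (the $(M+1)^{\text{th}}$ crossing time of $h$ by the honest CUSUMs is at most the first time $M+1$ of them sit above $h$ simultaneously) yields $\cJ[\tau_{(M+1)}(h)]\le \cJ[T_{M+1}(h)]$, establishing the first domination.

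For the second domination, the detection delays $\cJ[T_{M+1}(h)]$ and $\cJ[\widetilde{T}_{M+1}(h)]$ coincide as functions of $h$, but $\cA[\widetilde{T}_{M+1}(h)]\le \cA[T_{M+1}(h)]$ because $\rho_{\cN}(h)\le \sigma_{(1)}(h)$ pointwise (if any single honest CUSUM exceeds $h$ then the sum of all honest CUSUMs certainly does). I would then take any matched pair with $\cA[T_{M+1}(h_T)]\le \cA[\widetilde{T}_{M+1}(h_{\widetilde T})]$, write this as $\Exp_\infty[\sigma_{(1)}(h_T)]\le \Exp_\infty[\rho_{\cN}(h_{\widetilde T})]\le \Exp_\infty[\sigma_{(1)}(h_{\widetilde T})]$, invoke monotonicity of $h\mapsto \Exp_\infty[\sigma_{(1)}(h)]$ to conclude $h_T\le h_{\widetilde T}$, and then use monotonicity of $h\mapsto S_{M+1}(h)$ to deduce $\cJ[T_{M+1}(h_T)]=\Exp_0^{\cN}[S_{M+1}(h_T)]\le \Exp_0^{\cN}[S_{M+1}(h_{\widetilde T})]=\cJ[\widetilde{T}_{M+1}(h_{\widetilde T})]$.

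There is no real obstacle here once Lemma \ref{non-asy} is in hand; the argument is essentially a bookkeeping exercise in which the special value $|\cN|=M+1$ makes two a priori different classical detection statistics coincide on each side of the comparison. The one delicate point is the second step, where the two schemes no longer share their $\cA$ function and one must move thresholds monotonically to match FAR levels; this is handled by the pointwise inequality $\rho_{\cN}\le \sigma_{(1)}$ together with the standard fact that both worst-case expected FA times are nondecreasing in the threshold.
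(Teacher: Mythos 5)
Your proposal is correct and follows essentially the same route as the paper: it reduces all six operating characteristics via Lemma \ref{non-asy} with $L=M+1$, uses the identities $S_1=\sigma_{(1)}$, $\widetilde{S}_1=S_{M+1}$ and $\widetilde{S}_{M+1}=\rho_{\cN}$ forced by $|\cN|=M+1$, and concludes from the pointwise inequalities $\sigma_{(M+1)}(h)\le S_{M+1}(h)$ and $\rho_{\cN}(h)\le\sigma_{(1)}(h)$. The extra threshold-matching discussion is a harmless elaboration of what the paper does by simply fixing a common $h$.
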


\begin{proof}
Fix some arbitrary $h>0$.  Then,  it suffices to show
  \begin{align*} 
  \cA \left[ \widetilde{T}_{M+1} (h)\right] &\leq   \cA\left[T_{M+1}(h) \right]= \cA\left[ \tau_{(M+1)}(h) \right], \\
\cJN \left[\widetilde{T}_{M+1}(h) \right] &=\cJN \left[T_{M+1}(h) \right] \geq  \cJN\left[\tau_{(M+1)} (h)\right].
\end{align*}
 In view of Lemma  \ref{non-asy},  it suffices to show 
  \begin{align*} 
 \Exp_\infty \left[ \widetilde{S}_{M+1}(h) \right] &\leq   \Exp_\infty\left[S_{1}(h) \right]= \Exp_\infty\left[ \sigma_{(1)}(h) \right]. \\
\Exp_0^\cN\left[\widetilde{S}_{1}(h) \right] &=\Exp_0^\cN \left[S_{M+1}(h) \right] \geq \Exp_0^\cN \left[\sigma_{(M+1)}(h) \right].
\end{align*}
By definition,  $\sigma_{(L)}(h)  \leq S_{L}(h)$, with equality when $L=1$, therefore  it suffices to show that 
 \begin{align*} 
\widetilde{S}_{M+1}(h) &\leq  \sigma_{(1)}(h) \quad \text{and} \quad 
\widetilde{S}_{1}(h)  =S_{M+1}(h).
\end{align*}
The equality holds because  by definition  $\widetilde{S}_{1}(h)$  coincides with the consensus rule $S_{|\cN|}(h)$, and we further assume  that $|\cN|=M+1$.  The inequality holds because when $M+1=K-M$,   $\widetilde{S}_{M+1}(h)$ coincides with \textit{Sum-CUSUM}, $\rho_{\cN}(h)$, defined in \eqref{sum}, which can never be larger than the corresponding first  honest alarm, i.e., $\rho_{\cN}(h) \leq \sigma_{(1)}(h)$. \\
%\begin{align*}
%\widetilde{S}_{M+1}(h)   &= \inf\left\{t \in \bN: \sum_{k \in \cN} W_t^{k} \geq h\right\} , \\
% &\leq \inf\left\{t \in \bN: \max_{k \in \cN} W_t^{k} \geq h\right\} = \sigma_{(1)} (h).
%\end{align*}
\end{proof}

In  what follows, we focus on the asymptotic performance of the proposed rules. This will allow us to compare them when   $M+1 <K-M$, and also to provide a quantification of  the inflicted performance loss due to the presence of corrupt sensors  when the false alarm rate is  small.

 \subsection{Asymptotic analysis of the $L^{th}$ alarm} \label{sec:asymptotics}
 
In this section we characterize, to a first-order  asymptotic approximation, the  performance of the $L^{th}$  alarm, $\tau_{(L)}$, when \eqref{always1} holds.  \\

\begin{theorem} \label{theo:alarm}
Suppose  $L > M$.    If  $h=h_\gamma$  so that 
$\Exp_\infty\left[ \sigma_{(L-M)} (h_\gamma) \right]=\gamma$, then $\tau_{(L)} (h_\gamma) \in \ccg$. If, additionally, $|\cB| \geq L$, then    as $\gamma \rightarrow \infty$  
\begin{align} \label{alarm}
\cJ\left[\tau_{(L)} (h_\gamma)\right] \sim ( \log \gamma ) / \cI .
\end{align}
\end{theorem}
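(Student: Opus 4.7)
The plan is to use Lemma \ref{non-asy} to reduce both the false-alarm and detection-delay analysis to statements about the honest-sensor $L^{th}$-alarm, and then invoke the asymptotic results of Theorem \ref{th:one_shot_optimality} (specifically \eqref{ARL_one_shot_approximation} and \eqref{ADD_one_shot_approximation}) applied to the honest system of size $|\cN|=K-M$.

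First I would handle the false-alarm requirement. By \eqref{repre_oneshot} of Lemma \ref{non-asy}, $\cA[\tau_{(L)}(h)] = \Exp_\infty[\sigma_{(L-M)}(h)]$, so if $h_\gamma$ is chosen so that $\Exp_\infty[\sigma_{(L-M)}(h_\gamma)]=\gamma$, then $\cA[\tau_{(L)}(h_\gamma)]=\gamma$ and hence $\tau_{(L)}(h_\gamma)\in\ccg$ by definition. The condition $L>M$ guarantees $L-M\geq 1$, and the condition $L-M\leq |\cN|=K-M$ (which is implicit in $L\leq K$) is needed so that $\sigma_{(L-M)}$ is well-defined over the honest sensors; in the regime of interest $L\leq |\cB|\leq K-M$, this is automatic.

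Next I would extract the asymptotic size of $h_\gamma$. Applying \eqref{ARL_one_shot_approximation} of Theorem \ref{th:one_shot_optimality} to the honest $L^{th}$ alarm of order $L-M$, the calibration $\Exp_\infty[\sigma_{(L-M)}(h_\gamma)]=\gamma$ yields $h_\gamma \sim \log\gamma$ as $\gamma\to\infty$. In particular $h_\gamma\to\infty$, so all the asymptotic statements from Section \ref{sec:oracle} become applicable.

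Finally, for the detection delay, I would use \eqref{repre_oneshot} again to write $\cJ[\tau_{(L)}(h_\gamma)] = \Exp_0^\cB[\sigma_{(L)}(h_\gamma)]$. Since $L\leq|\cB|$ by assumption, the asymptotic approximation \eqref{ADD_one_shot} gives $\Exp_0^\cB[\sigma_{(L)}(h)] \sim h/\cI$ as $h\to\infty$. Substituting $h=h_\gamma$ and using $h_\gamma\sim\log\gamma$ from the previous step yields $\cJ[\tau_{(L)}(h_\gamma)]\sim (\log\gamma)/\cI$, which is \eqref{alarm}. There is no genuine obstacle here: Lemma \ref{non-asy} already carries out the worst-case reduction over the adversary and the change-point, and the two halves of Theorem \ref{th:one_shot_optimality} slot in directly; the only item to check carefully is that the index $L-M$ lies in the admissible range $[1,|\cN|]$, which follows from the hypotheses $L>M$ and $L\leq |\cB|\leq K-M$.
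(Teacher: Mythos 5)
Your proposal is correct and follows essentially the same route as the paper's own proof: reduce via \eqref{repre_oneshot} of Lemma \ref{non-asy} to the honest-sensor quantities, obtain $h_\gamma \sim \log\gamma$ from \eqref{ARL_one_shot_approximation}, and conclude with \eqref{ADD_one_shot}. The additional remark about checking that $L-M$ lies in $[1,|\cN|]$ is a harmless (and reasonable) bookkeeping point that the paper leaves implicit.
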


\begin{proof}
From  \eqref{repre_oneshot}  we have  that for every $h>0$ 
\begin{align*}
\cJ \left[\tau_{(L)}(h) \right] &=\Exp_{0}^{\cB} \left[\sigma_{(L)}(h) \right],  \\\
\cA \left[\tau_{(L)}(h) \right] &= \Exp_{\infty} \left[\sigma_{(L-M)}(h)\right].
\end{align*}
Thus, it suffices to show that   if  $h=h_\gamma$  so that 
$\Exp_\infty[ \sigma_{(L-M)} (h_\gamma)]=\gamma$,
then 
\begin{equation} \label{j}
\Exp_{0}^{\cB} \left[\sigma_{(L)}(h_\gamma) \right] \sim  (\log \gamma)/ \cI.
\end{equation} 
From \eqref{ADD_one_shot}  we have $\Exp_{0}^{\cB} [\sigma_{(L)}(h) ] \sim h/\cI$ as $h \rightarrow \infty$, and from 
\eqref{ARL_one_shot_approximation} that  $h_\gamma \sim \log \gamma$ as $\gamma \rightarrow \infty$, which implies \eqref{j}. \\ 
%\end{equation}
%Finally, from  Lemma \ref{lem0} it follows that  when  $\cB=\cN$,  the asymptotic upper bounds in \eqref{jjj}, and consequently \eqref{jjjj},  become  asymptotic equivalences, which implies \eqref{alarm0}.\\
%and for any $\cB \subseteq \cN$ if additionally \eqref{second} holds. This completes the proof. 
\end{proof}

% Therefore,  $\tau_{(M+1)}$ is  dominated by the first honest alarm, $\sigma_{(1)}$. For the latter detection rule, it is known from \eqref{old21}  that if  $h=h_\gamma$ is selected such that $\Exp_\infty\left[ \sigma_{(1)} (h_\gamma)\right]=\gamma$, then  \begin{equation} \label{in2}\cJ[\sigma_{(1)}(h_\gamma)]  \sim \frac{\log \gamma}{\cI}. \end{equation} Therefore, we conclude that \begin{align*}  \liminf_{\gamma \rightarrow \infty} \frac{\cJ[\tau_{(M+1)} (h_\gamma)]}{\log \gamma} \geq \frac{1}{\cI }, \end{align*} and it remains to show that 
%\begin{align} \label{alarm}\limsup_{\gamma \rightarrow \infty} \frac{\cJ[\tau_{(M+1)} (h_\gamma)]}{\log \gamma} \leq \frac{1}{\cI }. \end{align} Indeed, setting $L=M+1$ in the first relationship in  \eqref{old20} we have  that  as $h \rightarrow \infty$  \begin{align*}\cJ[\tau_{(M+1)}(h)]  = \Exp_{0} [\sigma_{(M+1)}(h)]  \sim \frac{h}{\cI}, \end{align*} and setting $L=1$ in the second relationship in  \eqref{old20} we have  that  $h_\gamma \sim \log \gamma$, which proce \end{proof}

%\begin{corollary}The $M+1$ alarm strategy is asymptotically optimal when $K=2M+1$, in which case as  $\gamma \rightarrow \infty$ we have  \begin{equation*}
%\cJ[\tau_{(M+1)}] \sim \frac{\log \gamma}{\cI} \sim\inf_{T \in \cC_\gamma}\cJ[T].\end{equation*}\end{corollary}

%\subsubsection{ Comparison with the oracle CUSUM rule}
%\subsubsection{The M+1 alarm strategy in groups of sensors}

\begin{remark}
Theorem \ref{theo:alarm} shows that the
first-order asymptotic performance of the  $L^{th}$ alarm is the same for any value of $L$ between $M+1$ and    $|\cB|$.  In view of Remark \ref{rem:one_shot_choice_L}, the proposed choice for $L$ is the smallest possible. This suggest setting  $L=M+1$, \textit{independently of whether the size of the  affected subset is known in advance or not}. 
\end{remark}

 \subsection{Asymptotic analysis of the centralized $L^{th}$ alarm}

 The detection performance of the $L^{th}$ alarm  can be improved significantly if it is  applied  to   \textit{groups} of sensors, instead of individual sensors,  an idea that was suggested in \cite{bay_lai} in the special case $M=1$. Indeed,  let $\cC_{1}, \ldots, \cC_{2M+1}$ be a partition of $[K]$, i.e., 
\begin{align} \label{partition}
\begin{split}
\cC_i \cap \cC_j &= \emptyset \quad \forall  \; 1 \leq i \neq j \leq 2M+1 \\
\text{and} & \quad \bigcup_{i=1}^{2M+1} \cC_i=[K].
\end{split}
\end{align}
%For example, there may be  a second hierarchy level in the network, so that the  fusion center communicates with $2M+1$ nodes and node $i$ has access to the observations from all sensors in $\cB_i$.  
Let $\check{\tau}_{i}(h)$ be the CUSUM stopping time of  the $i^{th}$ group, i.e., 
$\check{\tau}_{i}(h) \equiv \tau_{\cC_i}(h)$, where $\tau_{\cC_i}(h)$ is defined  in \eqref{tB}.  In the following theorem we characterize the first-order  asymptotic  performance of the detection rule that stops when  $M+1$ groups have raised an alarm, i.e., at $\check{\tau}_{(M+1)}(h)$, where 
$$
\check{\tau}_{(1)}(h) \leq \ldots \leq \check{\tau}_{(2M+1)}(h).
$$
For simplicity of presentation, for the following theorem we restrict ourselves to the case that all honest sensors are affected by the change $(\cB= \cN)$, and  $K$ is a multiple of $2M+1$.  
%For the statement and proof of this result, we introduce some additional notation: 
Moreover, we denote by $\check{\cN}$  the subset of honest groups,  by $\{\check{\sigma}_k, k \in \check{\cN}\}$  the alarm times from \textit{only} the honest groups, and  we set $\check{\sigma}_{(1)} \leq \ldots \leq \check{\sigma}_{(M+1)}$.  \\

\begin{theorem}\label{theo:group_alarm}
Suppose that $\cB=\cN$ and that $K$ is a multiple of $2M+1$. Moreover consider a partition  \eqref{partition} in which all subsets have the same size, $K/(2M+1)$.  If we set $h=h_\gamma$  so that 
$\Exp_\infty [\check{\sigma}_{(1)} (h_\gamma)] =\gamma$, then $\check{\tau}_{(M+1)}  (h_\gamma) \in \ccg$. Moreover,  as $\gamma \rightarrow \infty$ 
\begin{align} \label{group_alarm}
\cJN \left[\check{\tau}_{(M+1)}  (h_\gamma) \right] \sim\frac{2M+1}{K} \; 
  \frac{ \log \gamma}{\cI }.
\end{align}
\end{theorem}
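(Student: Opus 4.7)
The plan is to first establish a group-level analogue of Lemma \ref{non-asy} that identifies the worst-case strategy of the adversary, and then apply the single-CUSUM asymptotics of Subsection 3.2 to each of the $M+1$ entirely honest groups. The worst-case allocation of the $M$ corrupt sensors among the $2M+1$ groups is to place them in $M$ distinct groups, so that exactly $M$ groups are contaminated and exactly $M+1$ groups are fully honest; any other allocation leaves fewer groups to be manipulated by the adversary and therefore cannot be worse for either criterion. Under $\Pro_\infty$, the adversary can drive the CUSUM of each contaminated group above $h$ immediately, by making the log-likelihood ratios of the corrupt observations arbitrarily large, and hence $\check{\tau}_{(M+1)}$ reduces to the first crossing among the $M+1$ independent honest group CUSUMs: $\cA[\check{\tau}_{(M+1)}(h)] = \Exp_\infty[\check{\sigma}_{(1)}(h)]$. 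Symmetrically under $\Pro_0^\cN$, the adversary keeps the contaminated CUSUMs at $0$ by driving the corrupt log-likelihood ratios sufficiently negative, so $\check{\tau}_{(M+1)}$ must wait for all $M+1$ honest groups to cross $h$: $\cJN[\check{\tau}_{(M+1)}(h)] = \Exp_0^\cN[\check{\sigma}_{(M+1)}(h)]$. That $\nu = 0$ is the worst change-point follows from the Markov/regenerative argument of Lemma \ref{lem:worst_case} applied to the joint process of the $M+1$ honest group CUSUMs.

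Given this reduction, the choice $\Exp_\infty[\check{\sigma}_{(1)}(h_\gamma)] = \gamma$ gives $\check{\tau}_{(M+1)}(h_\gamma) \in \ccg$ immediately. Each honest group contains $K/(2M+1)$ sensors, all of which are honest and affected, so each $\check{\sigma}_k$ is the stopping time of a centralized CUSUM over $K/(2M+1)$ affected sensors. By the asymptotic exponentiality recalled in \eqref{opt_ARL}, $\Exp_\infty[\check{\sigma}_k(h)] \sim \Theta(1) e^h$, and since the $M+1$ honest group CUSUMs are mutually independent, the minimum is also asymptotically exponential, so $\Exp_\infty[\check{\sigma}_{(1)}(h)] \sim \Theta(1) e^h$. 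Inverting this relation yields $h_\gamma \sim \log \gamma$.

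For the detection-delay asymptotic, I would apply the almost-sure approximation \eqref{cusum_as_approximation} to each honest group to obtain $\check{\sigma}_k(h)/h \to (2M+1)/(K\cI)$ almost surely under $\Pro_0^\cN$ as $h \to \infty$. Taking the maximum over the $M+1$ honest groups preserves the limit: $\check{\sigma}_{(M+1)}(h)/h \to (2M+1)/(K\cI)$ a.s. To upgrade to convergence in expectation, Fatou's lemma gives the asymptotic lower bound, while the upper bound follows from uniform integrability: the crude bound $\check{\sigma}_{(M+1)}(h)^2 \leq \sum_k \check{\sigma}_k(h)^2$ together with \eqref{cusum_moment_approximation} at $r=2$ yields $\Exp_0^\cN[(\check{\sigma}_{(M+1)}(h)/h)^2] = \calo(1)$. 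Hence $\Exp_0^\cN[\check{\sigma}_{(M+1)}(h)] \sim h(2M+1)/(K\cI)$, and substituting $h = h_\gamma \sim \log \gamma$ gives \eqref{group_alarm}. The main technical obstacle is precisely this upgrade from a.s. to $L^1$ convergence for the maximum of $M+1$ independent CUSUM stopping times; everything else is a direct translation of the single-CUSUM results of Subsection 3.2 to the group setting.
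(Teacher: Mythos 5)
Your proposal is correct and follows essentially the same route as the paper: identify the worst-case allocation ($M$ contaminated groups, $M+1$ fully honest ones), reduce $\cA$ and $\cJN$ to $\Exp_\infty[\check{\sigma}_{(1)}]$ and $\Exp_0^{\cN}[\check{\sigma}_{(M+1)}]$ as in \eqref{repre_oneshot}, and then transplant the single-CUSUM asymptotics of Subsection 3.2 to the group CUSUMs. The paper leaves the $L^1$ upgrade for the maximum implicit (``similarly to \eqref{ADD_one_shot}'', whose proof goes through Lemma \ref{lem:simultaneous}); your uniform-integrability argument via \eqref{cusum_moment_approximation} with $r=2$ is an equally valid way to fill that in.
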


\begin{proof}
In the worst-case scenario for both the detection delay and the false alarm rate, there are $M$ groups that contain exactly  one corrupt sensor each, and  all other groups consist of only honest sensors.  Then, similarly to \eqref{repre_oneshot} we have for every $h>0$ that 
\begin{align*}
 \cA[\check{\tau}_{(M+1)}(h)]&=  
  \Exp_{\infty} \left[  \check{\sigma}_{(1)} (h)\right], \\
\quad\cJN [\check{\tau}_{(M+1)}(h)] &= \Exp_{0}^{\cN} \left[  \check{\sigma}_{(M+1)} (h)\right],
 \end{align*}
 and similarly to \eqref{ARL_one_shot} and \eqref{ADD_one_shot} it can be shown that 
\begin{align*}
  \Exp_{\infty} \left[  \check{\sigma}_{(1)} (h) \right] &\sim \Theta(1) \, e^{h}, \\
  \Exp_{0}^{\cN} \left[  \check{\sigma}_{(M+1)}(h) \right] &\sim \frac{ 2M+1}{K}  \; \frac{h}{\cI},
 \end{align*}
which implies \eqref{group_alarm}.\\
 \end{proof}

\begin{remark}
A comparison of \eqref{alarm} and \eqref{group_alarm} reveals that,  under the conditions of Theorem \ref{theo:group_alarm}, the
centralized $(M+1)$-alarm is asymptotically more efficient than  the decentralized $(M+1)$-alarm (recall Definition \ref{def2}),   since its   first-order asymptotic  detection delay  is  $K/(2M+1)$ smaller. We will see in the next sections that we can achieve even better asymptotic performance with the other two procedures under consideration. 
\end{remark}

\begin{remark}
The decentralized and centralized \textit{second}-alarm,  $\tau_{(2)}$ and $\check{\tau}_{(2)}$, were  proposed in \cite{bay_lai}, in the case that  all honest sensors are affected, and asymptotic upper bounds were obtained for the performance of these procedures.  Setting $L=2$ and $M=1$ in Theorems \ref{theo:alarm} and
 \ref{theo:group_alarm}  we improve upon these results by  characterizing the  performance of $\tau_{(2)}$ and $\check{\tau}_{(2)}$ up to a first-order asymptotic approximation.
\end{remark}

\subsection{Asymptotic analysis of the voting rule}
We now study the   asymptotic performance of the 
voting rule, $T_{L}$, that was defined in \eqref{general_voting}.\\

\begin{theorem} \label{th:voting_byzantine_optimality}
Suppose that $L > M$.  If $h=h_\gamma$ is  so that 
$\Exp_\infty[S_{L-M}(h_\gamma)]=\gamma$, then 
$T_{L} (h_\gamma) \in \ccg$. If also  $ |\cB| \geq L$, then  as $\gamma \rightarrow \infty$ we have
 \begin{equation}   \label{upper44}
\cJ\left[T_{L}(h_\gamma)\right] \sim \frac{\log \gamma}{ (L-M) \, \cI}.
\end{equation}
 \end{theorem}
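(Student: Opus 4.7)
The plan is to reduce everything to the classical (honest-only) voting rule results from Section~\ref{sec:oracle} via Lemma~\ref{non-asy}, and then invoke Theorem~\ref{th:voting_optimality}. Specifically, Lemma~\ref{non-asy}, equation~\eqref{repre_voting}, gives the two identities
\[
\cA[T_L(h)] = \Exp_\infty[S_{L-M}(h)], \qquad \cJ[T_L(h)] = \Exp_0^{\cB}[S_L(h)],
\]
which is exactly what allows us to transfer the honest analysis to the Byzantine setting, provided $L>M$.

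The first claim is then immediate: by construction $h_\gamma$ makes $\Exp_\infty[S_{L-M}(h_\gamma)] = \gamma$, so $\cA[T_L(h_\gamma)] = \gamma$ and hence $T_L(h_\gamma) \in \ccg$. No asymptotics are needed here.

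For the detection delay, the idea is a two-step chain. First, I would apply the threshold asymptotic \eqref{ARL_majority_approximation} of Theorem~\ref{th:voting_optimality} with parameter $L-M$ (which is a valid voting-rule parameter in the honest setup since $1 \le L-M \le K-M = |\cN|$) to conclude
\[
h_\gamma \sim \frac{\log \gamma}{L-M} \quad \text{as } \gamma \to \infty.
\]
Second, since $L \le |\cB|$ by assumption, I would apply the delay approximation \eqref{ADD_one_shot} to $S_L(h)$ under $\Pro_0^{\cB}$, which gives $\Exp_0^{\cB}[S_L(h)] \sim h/\cI$ as $h \to \infty$. Substituting $h = h_\gamma$ and chaining the two equivalences yields
\[
\cJ[T_L(h_\gamma)] = \Exp_0^{\cB}[S_L(h_\gamma)] \sim \frac{h_\gamma}{\cI} \sim \frac{\log \gamma}{(L-M)\,\cI},
\]
which is \eqref{upper44}.

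There is essentially no hard step: the entire proof is a composition of results already established for the classical multichannel framework. The only place that requires any attention is making sure the two different voting parameters ($L-M$ for the false alarm side and $L$ for the delay side) are fed into the correct lemmas from Section~\ref{sec:oracle}, and in particular that the hypothesis $L \le |\cB|$ is what licenses the use of \eqref{ADD_one_shot}. If one wanted to sharpen to an equivalence rather than an upper bound, one would also note that \eqref{ARL_majority_approximation} is an asymptotic equivalence (not just an upper bound), so no contradiction argument needs to be repeated here.
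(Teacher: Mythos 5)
Your proposal is correct and follows essentially the same route as the paper: reduce to the honest-sensor quantities via \eqref{repre_voting}, get $h_\gamma \sim (\log\gamma)/(L-M)$ from \eqref{ARL_majority_approximation}, and combine with $\Exp_0^{\cB}[S_L(h)] \sim h/\cI$ from \eqref{ADD_one_shot}. Your closing remark is also apt — the contradiction argument establishing the sharpness of \eqref{ARL_majority_approximation} was already done inside Theorem~\ref{th:voting_optimality}, so nothing needs to be repeated here.
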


\begin{proof}
From \eqref{repre_voting}  we know that  for every $h>0$
\begin{align*}
\cA\left[T_{L}(h)\right] &= \Exp_\infty\left[S_{L-M}(h)\right],\\ \cJ\left[T_{L}(h)\right] &= \Exp_{0}^{\cB}[S_{L}(h)].
\end{align*}
From   \eqref{ADD_one_shot} we have  that 
$\Exp_{0}^{\cB} \left[S_{L}(h) \right]  \sim h/\cI $
 as $h \rightarrow \infty$,  and  from \eqref{ARL_majority_approximation}  that  if
$h=h_\gamma$ is  so that 
$\Exp_\infty[S_{L-M}(h_\gamma)]=\gamma$, then
 $h_\gamma  \sim  (\log \gamma)/ (L-M)$  as $\gamma \rightarrow \infty$. This implies  that as $\gamma \rightarrow \infty$ 
  \begin{equation}  \label{upper444}
\Exp_{0}^{\cB} \left[S_{L}(h_\gamma) \right] \sim \frac{\log \gamma}{ (L-M) \, \cI}
\end{equation} 
and    completes the proof. \\
\end{proof}

\begin{remark}
From \eqref{upper44} it follows that the asymptotic worse-case detection delay of the voting rule, $T_L$, is  decreasing in $L$, which implies that $L$ should  be as large as possible. Since $L$ must be at most equal to the  size of the affected subset, $|\cB|$, this means that the  selection of $L$ in the family of voting rules   depends heavily on  prior knowledge regarding $|\cB|$. Indeed, in the  absence of any information,  $L$ must be set equal to $M+1$, and the resulting  first-order  asymptotic performance is the same as that of the $(M+1)$-alarm, $\tau_{(M+1)}$.  On the other hand,  in the ideal case that $|\cB|$ is known in advance,  the asymptotic approximation \eqref{upper44} suggests setting $L=|\cB|$,  in which  case 
the resulting  first-order asymptotic performance   is $|\cB|-M$ times smaller than that of the $(M+1)$-alarm.  We will now see that this asymptotic performance is achieved by \textit{Low-Sum-CUSUM, without prior knowledge of $|\cB|$}. 
\end{remark}

\subsection{Asymptotic performance of Low-Sum-CUSUM}

We now turn to the asymptotic analysis of  \textit{Low-Sum-CUSUM}, that was defined in \eqref{general_low_sum_cusum}. 

%We will see that this rule achieves the asymptotic performance \eqref{upper4444} \textit{without any prior knowledge regarding the size of the affected subset}. \\

%As we mentioned before, similarly to the $M+1$ alarm, this rule assumes knowledge of the number of corrupt sensors, but not of the number of  affected sensors. However, unlike the  $M+1$ alarm,$|\cN|$-low-Sum-CUSUM can only be implemented in a centralized fashion. 

%\noindent \textit{\underline{Remarks}}: It becomes clear from this proposition  that this detection rule, as well, requires the assumption that   $K \geq 2M+1$. 
%2) Similar arguments would apply if we use, instead of the sum, a statistic of the form $ H(Y_t^{(1)}, \ldots, Y_t^{(K-M)})$, where $H:\bR_{+}^{K-M} \rightarrow [0, \infty)$ is a non-negative function that is increasing in each of its arguments. \\

\begin{theorem}
Suppose $1 \leq L \leq K-M$.   If $h=h_\gamma$ is so that  $\Exp_{\infty} [ \widetilde{S}_{L}(h_\gamma) ]=\gamma$, then $\widetilde{T}_{L}(h_\gamma) \in \ccg$. 
If also $L > K-|\cB|$, then   as $\gamma \rightarrow \infty$ we have that
\begin{align}  \label{upper_lowsumcusum}
\cJ \left[\widetilde{T}_{L}(h_\gamma) \right] 
& \sim  \frac{L}{K-M}  \,   \frac{\log \gamma}{( |\cB| -(K-L))\, \cI}.
 \end{align}
%When in particular $\cB=\cN$, 
%\begin{equation}  \label{upper_lowsumcusum_all_affected}
%\cJ \left[\widetilde{T}_{L}(h_\gamma) \right]\sim
 %\frac{L}{|\cN|}  \,   \frac{\log \gamma}{(L-M)\, \cI}.
%\end{equation}
\end{theorem}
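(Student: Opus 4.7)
The plan is to reduce everything to the classical-multichannel analysis of Low-Sum-CUSUM (Lemma~\ref{ARL_low_sum_cusum}, Lemma~\ref{ADD_lowsumcusum}, Theorem~\ref{th:lowsumcusum}) via the worst-case reduction already established in Lemma~\ref{non-asy}.

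First, I would invoke \eqref{repre_lowsumcusum}, which identifies the worst-case operating characteristics of $\widetilde{T}_L$ in the Byzantine setup with honest-sensor quantities: $\cA[\widetilde{T}_L(h)] = \Exp_\infty[\widetilde{S}_L(h)]$ and $\cJ[\widetilde{T}_L(h)] = \Exp_0^{\cB}[\widetilde{S}_{L-M}(h)]$. The false-alarm membership $\widetilde{T}_L(h_\gamma) \in \ccg$ is then immediate from the definition of $h_\gamma$.

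Next, applying Theorem~\ref{th:lowsumcusum} with $|\cN| = K-M$, the threshold $h_\gamma$ calibrated so that $\Exp_\infty[\widetilde{S}_L(h_\gamma)] = \gamma$ satisfies
\begin{equation*}
h_\gamma \sim \frac{L}{K-M}\,\log\gamma
\end{equation*}
as $\gamma\to\infty$. To evaluate $\Exp_0^{\cB}[\widetilde{S}_{L-M}(h_\gamma)]$, I would appeal to Lemma~\ref{ADD_lowsumcusum} with $L$ replaced by $L-M$; the hypothesis of that lemma, $L-M > |\cN|-|\cB|$, is exactly our assumption $L > K-|\cB|$. This yields
\begin{equation*}
\Exp_0^{\cB}\left[\widetilde{S}_{L-M}(h)\right] \sim \frac{h}{\bigl((L-M)-(K-M)+|\cB|\bigr)\,\cI} = \frac{h}{(|\cB|-(K-L))\,\cI}.
\end{equation*}

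Combining the two asymptotic equivalences gives
\begin{equation*}
\cJ[\widetilde{T}_L(h_\gamma)] = \Exp_0^{\cB}\left[\widetilde{S}_{L-M}(h_\gamma)\right] \sim \frac{L}{K-M}\cdot\frac{\log\gamma}{(|\cB|-(K-L))\,\cI},
\end{equation*}
which is the claim. There is no genuine obstacle here — the result is purely a bookkeeping consequence of the worst-case adversary reduction \eqref{repre_lowsumcusum} together with the classical Low-Sum-CUSUM analysis — but the one point to handle carefully is matching the indices: the false-alarm rate is driven by $\widetilde{S}_L$ over $|\cN|=K-M$ honest sensors, whereas the detection delay is driven by $\widetilde{S}_{L-M}$ on those same sensors, and the condition $L>K-|\cB|$ must be recognized as the translation of the non-triviality condition $L-M>|\cN|-|\cB|$ that Lemma~\ref{ADD_lowsumcusum} requires.
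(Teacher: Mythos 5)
Your proposal is correct and follows essentially the same route as the paper's own proof: reduce via \eqref{repre_lowsumcusum}, obtain the threshold asymptotics $h_\gamma \sim (L/(K-M))\log\gamma$ from Theorem~\ref{th:lowsumcusum}, and evaluate the delay via Lemma~\ref{ADD_lowsumcusum} applied to $\widetilde{S}_{L-M}$ with $|\cN|=K-M$. The index bookkeeping and the translation of the non-triviality condition $L-M>|\cN|-|\cB|$ into $L>K-|\cB|$ are exactly as in the paper.
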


\begin{proof}
From \eqref{repre_lowsumcusum}  we have  for every $h>0$ that
\begin{align*}
\cA\left[ \widetilde{T}_{L}(h) \right] &= \Exp_{\infty} \left[  \widetilde{S}_{L}(h)\right], \\
\cJ \left[\widetilde{T}_{L}(h) \right] &=  \Exp_{0}^{\cB} \left[ \widetilde{S}_{L-M}(h) \right].
\end{align*}
From  \eqref{upper6}  it follows that as $h \rightarrow \infty$ 
$$ 
\Exp_{0}^{\cB} \left[ \widetilde{S}_{L-M}(h) \right] \sim 
\frac{h}{(L-M - |\cN|+ |\cB|)\, \cI},
$$   
and from Theorem \ref{th:lowsumcusum}  that if  $h=h_\gamma$ is selected so that  $ \Exp_{\infty} [\widetilde{S}_{L}(h_\gamma)]=\gamma$, then $h_\gamma \sim (L/ |\cN|) \, \log \gamma$. Thus, setting $h=h_\gamma$ in the previous relationship and using the fact that $|\cN|=K-M$, we obtain  \eqref{upper_lowsumcusum}.\\
\end{proof}

The asymptotic performance  \eqref{upper_lowsumcusum}  of 
\textit{Low-Sum-CUSUM} is decreasing in $L$, which implies that $L$  should be selected equal to its largest possible value, $K-M$. Therefore, the proposed value of $L$ for \textit{Low-Sum-CUSUM} does \textit{not}  require knowledge of the size of the affected subset. The following corollary describes the resulting first-order asymptotic performance.\\

\begin{corollary}
For any $\gamma \geq 1$, if $h=h_\gamma$ is so that  $\Exp_{\infty} [ \widetilde{S}_{K-M}(h_\gamma)]=\gamma$, then $\widetilde{T}_{K-M}(h_\gamma) \in \ccg$. If  $|\cB| \geq M+1$, then as $\gamma \rightarrow \infty$ we have 
\begin{align}  \label{asymptotic_performance_lowsumcusm}
 \cJ \left[\widetilde{T}_{K-M}(h_\gamma) \right] \sim
  \frac{\log \gamma}{(|\cB|-M)\, \cI}.
\end{align}
\end{corollary}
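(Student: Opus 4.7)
The plan is to recognize that this corollary is a direct specialization of the preceding theorem with the specific choice $L = K-M$, which is the largest admissible value in the family of \emph{Low-Sum-CUSUM} rules. Since the asymptotic detection delay in \eqref{upper_lowsumcusum} is strictly decreasing in $L$ (for a fixed $\gamma$, the prefactor $L/(K-M)$ increases linearly while the denominator $|\cB|-(K-L)$ also increases linearly, and a short calculation shows the overall expression decreases), this choice minimizes the first-order term, which is what the corollary records.

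First I would verify the hypotheses of the preceding theorem at $L=K-M$. The constraint $1 \leq L \leq K-M$ is trivially satisfied, and the additional hypothesis $L > K-|\cB|$ becomes $K-M > K-|\cB|$, i.e., $|\cB| > M$, or equivalently $|\cB| \geq M+1$. This recovers precisely the hypothesis stated in the corollary. The false alarm guarantee $\widetilde{T}_{K-M}(h_\gamma) \in \ccg$ is immediate from the identity $\cA[\widetilde{T}_{L}(h)] = \Exp_\infty[\widetilde{S}_{L}(h)]$ in \eqref{repre_lowsumcusum}: since $h_\gamma$ is chosen so that the right-hand side equals $\gamma$, we have $\cA[\widetilde{T}_{K-M}(h_\gamma)] = \gamma$, so $\widetilde{T}_{K-M}(h_\gamma) \in \ccg$.

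Next, I would substitute $L=K-M$ directly into \eqref{upper_lowsumcusum}. The prefactor $L/(K-M)$ reduces to $1$, and the quantity $|\cB| - (K-L)$ inside the denominator reduces to $|\cB| - M$, yielding
$$
\cJ\left[\widetilde{T}_{K-M}(h_\gamma)\right] \sim \frac{\log \gamma}{(|\cB|-M)\, \cI},
$$
which is exactly \eqref{asymptotic_performance_lowsumcusm}.

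There is no genuine obstacle: the entire analytic content resides in the preceding theorem, and the corollary is a bookkeeping step that records the outcome of choosing $L$ to be its largest feasible value. The conceptual value is that this optimal selection does not require prior knowledge of $|\cB|$, so \emph{Low-Sum-CUSUM} with $L=K-M$ adapts automatically to the size of the affected subset and matches, for each admissible $\cB$, the best first-order performance achievable by a voting rule that is tuned to the specific value of $|\cB|$.
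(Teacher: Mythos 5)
Your proposal is correct and matches the paper's (implicit) argument exactly: the corollary is obtained by specializing the preceding theorem to $L=K-M$, under which the prefactor $L/(K-M)$ becomes $1$, the denominator $|\cB|-(K-L)$ becomes $|\cB|-M$, and the hypothesis $L>K-|\cB|$ reduces to $|\cB|\geq M+1$. Your verification of the false-alarm guarantee via \eqref{repre_lowsumcusum} and your observation that the bound is decreasing in $L$ likewise mirror the paper's discussion preceding the corollary.
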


\subsection{Discussion} \label{subsec:discuss}
We now summarize the main results of this section. First of all, in the special case that  $K-M=M+1$, the only possible choice for $L$ for  all three families under consideration is  $M+1$ and, for any given false alarm rate,    the $(M+1)$-alarm dominates the corresponding voting rule, $T_{M+1}$, and the latter  dominates the corresponding  \textit{Low-Sum-CUSUM}, $\widetilde{T}_{M+1}$.

When  $K-M>M+1$,  the proposed values for $L$
are $M+1$ and $K-M$ for  $\tau_{(L)}$  and  $\widetilde{T}_L$, respectively, thus, they  do not require  knowledge of  the size of the affected subset, $|\cB|$, and  the  first-order asymptotic detection delay of \textit{Low-Sum-CUSUM}, $\widetilde{T}_{K-M}$ is $|\cB|-M$ smaller than that of the $(M+1)$-alarm.  On  the other hand, the selection of $L$ for the voting rule, $T_{L}$, depends on prior knowledge regarding  $|\cB|$. However, even in the ideal case that $|\cB|$ is  known in advance, the asymptotic performance of the voting rule with $L=|\cB|$, $T_{|\cB|}$, is  the same as that of \textit{Low-Sum-CUSUM}, $\widetilde{T}_{K-M}$. % In the following section, we conduct a simulation study that reveals that  even if it is known in advance that all sensors are affected ($\cB=\cN$), $\widetilde{T}_{K-M}$  performs   better  than  $T_{K-M}$.

%Before we do so,  it is important to stress that the three procedures under consideration require different kinds of information and can be implemented with different levels of communication activity from the sensors to the fusion center. In particular,  $\tau_{(M+1)}$ and $\widetilde{T}_{|\cN|}$ require knowledge of only the number of corrupt sensors, whereas  $T_{|\cB|}$ requires knowledge of only the number of affected sensors. Moreover, the $(M+1)^{th}$ alarm strategy is extremely simple, as it requires transmission of at most one bit from each sensor, whereas the voting rule, $T_{|\cB|}$, requires one-bit transmissions from each sensor to the fusion center whenever its local CUSUM statistic becomes larger than $h$ and whenever it becomes smaller than $h$. On the other hand, $\widetilde{T}_{|\cN|}$ requires transmission of the full value of the local CUSUM statistic from each sensor at each time. 

Finally, we  found that although the centralized $(M+1)$-alarm achieves much better asymptotic performance than the corresponding decentralized $(M+1)$-alarm, it is always asymptotically less efficient than  \textit{Low-Sum-CUSUM}. \\  %This conclusion will also be supported by the simulation study in the next section. \\
   
%When \eqref{dicho} does not hold,  a similar, non-asymptotic ordering is not available, but our asymptotic analysis reveals that the previous ordering of the  proposed procedures  is completely reversed. Indeed, while the $M+1$ alarm  preserves the same asymptotic performance as in the previous case,  the first-order asymptotic performance of both $T_{|\cB|}$ and  $\widetilde{T}_{|\cN|}$, is $(\log \gamma)/ ((|\cB|-M) \, \cI)$. Therefore, the larger the difference between the number of affected and corrupt sensors,  the higher the loss inflicted by using the $M+1$ alarm strategy.  %When the number of both the honest and affected sensors is known, the two efficient procedures, 

%While $\widetilde{T}_{|\cN|}$ and $T_{|\cB|}$ have the same first-order asymptotic performance,  a more refined asymptotic analysis can show that the second-order term in the performance of $\widetilde{T}_{|\cN|}$ is  $\calo(\log \log \gamma)$, whereas that of  $T_{|\cB|}$ is $\calo(\sqrt{\log \gamma})$. Therefore,  we expect that  $\widetilde{T}_{|\cN|}$ should perform better than $T_{|\cB|}$ in practice. \\

%However,  However, if bandwidth constraints are of interest, then we believe that simple modifications of $\widetilde{T}_{|\cN|}$ can be more efficient in practice. 

\begin{remark}
In Section \ref{sec:oracle} we saw that, in the absence of corrupt sensors,  it is possible to achieve the optimal asymptotic performance \eqref{opt_perf}  for any affected subset $\cB \subseteq \cN$,  up to a first-order asymptotic approximation, or even up to a constant term,  even if there is absolutely no information about the  affected subset. This  is not the case in the presence of corrupt sensors, at least for the detection rules that we study here. Indeed, comparing \eqref{cusum_perf} with  \eqref{asymptotic_performance_lowsumcusm} we can see that  the best  first-order asymptotic performance that can be achieved by the proposed procedures is  the same  as  that of a centralized CUSUM rule that utilizes only  $|\cB|-M$, not $|\cB|$, honest, affected senors. 
\end{remark}

%When both $\sigma_{\cB}$ and $\tau_{(M+1)}$, $T_{|\cB|}$ are designed to satisfy the false alarm constraint with equality, then  as $\gamma \rightarrow \infty$ we have for every  $\cB \subseteq \cN$$$\frac{\cJ [\tau_{(M+1)}]}{\cJ [\sigma_{\cB}]} \longrightarrow |\cB|, \quad  \frac{\cJ [T_{|\cB|}]}{\cJ [\sigma_{\cB}]} \longrightarrow \frac{|\cB|-M}{|\cB|}, $$whereas$$\limsup  \frac{\cJ [\widetilde{T}_{|\cN|}]}{\cJ [\sigma_{\cB}]} \leq \frac{|\cB|}{|\cB|-M},$$and at least when $|\cB|=|\cN|=M+1$, $$ \frac{\cJ [\widetilde{T}_{|\cN|}]}{\cJ [\tau_{\cB}]} \rightarrow M+1.$$
%While $T_{|\cB|}$ and $\widetilde{T}_{|\cN|}$ have the same asymptotic performance, at least for every $|\cB| \geq |\cN|-M$, it should be noted that  $T_{|\cB|}$ assumes knowledge of the size of the affected subset. 
%On the other hand, it can be implemented in a decentralized fashion. 

\section{Simulation Experiments} \label{sec:simulation}
We now  illustrate our theoretical findings in the previous section with two simulation studies where   all honest sensors are  normally distributed with variance 1 and mean $0$ before the change and $1$ after the change, i.e., $f=\cN(0,1)$ and $g=\cN(1,1)$.  That is, all honest sensors are  affected by the change ($\cB=\cN$).

In the first simulation study, there are $M=2$ corrupt and $|\cN|=3$ honest sensors, therefore $K=2M+1$.   In Figures 1(a),(c) we compare the performance of the $(M+1)$-alarm, $\tau_{(M+1)}$, the voting rule, $T_{M+1}$, and \textit{Low-Sum-CUSUM}, $\widetilde{T}_{M+1}$. As predicted by Corollary \ref{corol},  we see that  $\tau_{(M+1)}$ dominates $T_{M+1}$, and $T_{M+1}$ dominates $\widetilde{T}_{M+1}$ for any level of the false alarm rate.

 In the second simulation study, we have $M=1$ corrupt and $|\cN|=5$ honest sensors, thus,  
$K>2M+1$.   In Figures 1(b),(d),  we  compare the performance of $\tau_{(M+1)}$, $\widetilde{T}_{K-M}$, and the voting rule, $T_{K-M}$, which assumes knowledge of the fact that all honest sensors are affected. Moreover, we consider the centralized version of the $(M+1)$-alarm studied in Section \ref{sec:asymptotics}. As expected from our asymptotic results,  we observe that the centralized  $(M+1)$-alarm performs  better than the decentralized $(M+1)$-alarm, $\tau_{(M+1)}$, but worse compared to  the voting rule  $T_{K-M}$. A  more interesting observation is that  \textit{Low-Sum-CUSUM}, $\widetilde{T}_{K-M}$,  performs better than the voting rule, $T_{K-M}$, despite the fact that they have the same first-order asymptotic performance. 
 
%  which does not make any assumption regarding the size of the affected subset. A  more interesting observation is that $T_{|\cB|}$ performs worse than  $\widetilde{T}_{K-M}$, despite the fact that it assumes knowledge of the size of the affected subset, 

\begin{figure}
\centering
\begin{tabular}{cc}
\includegraphics[width=0.45\linewidth]{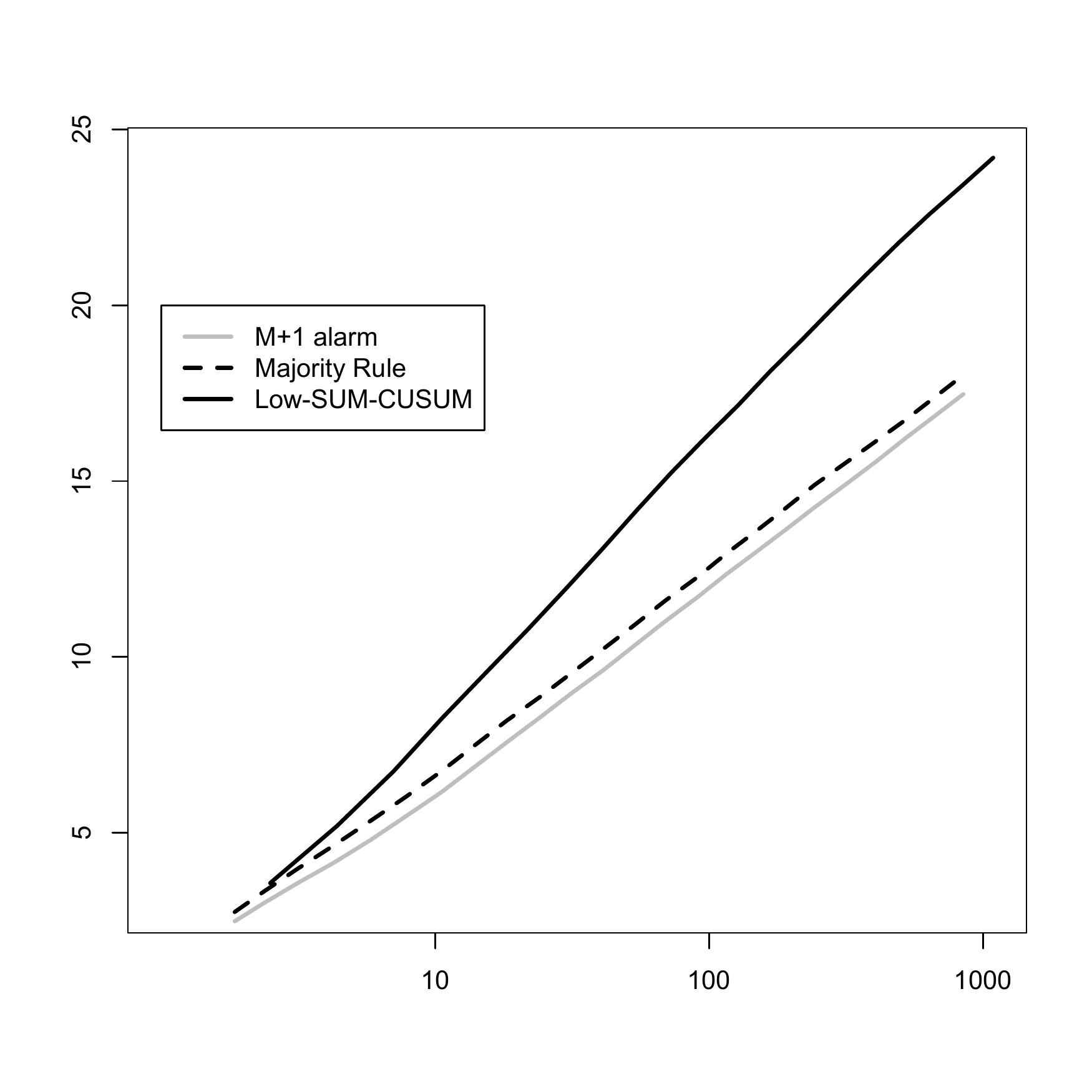}  
& 
\includegraphics[width=0.45\linewidth]{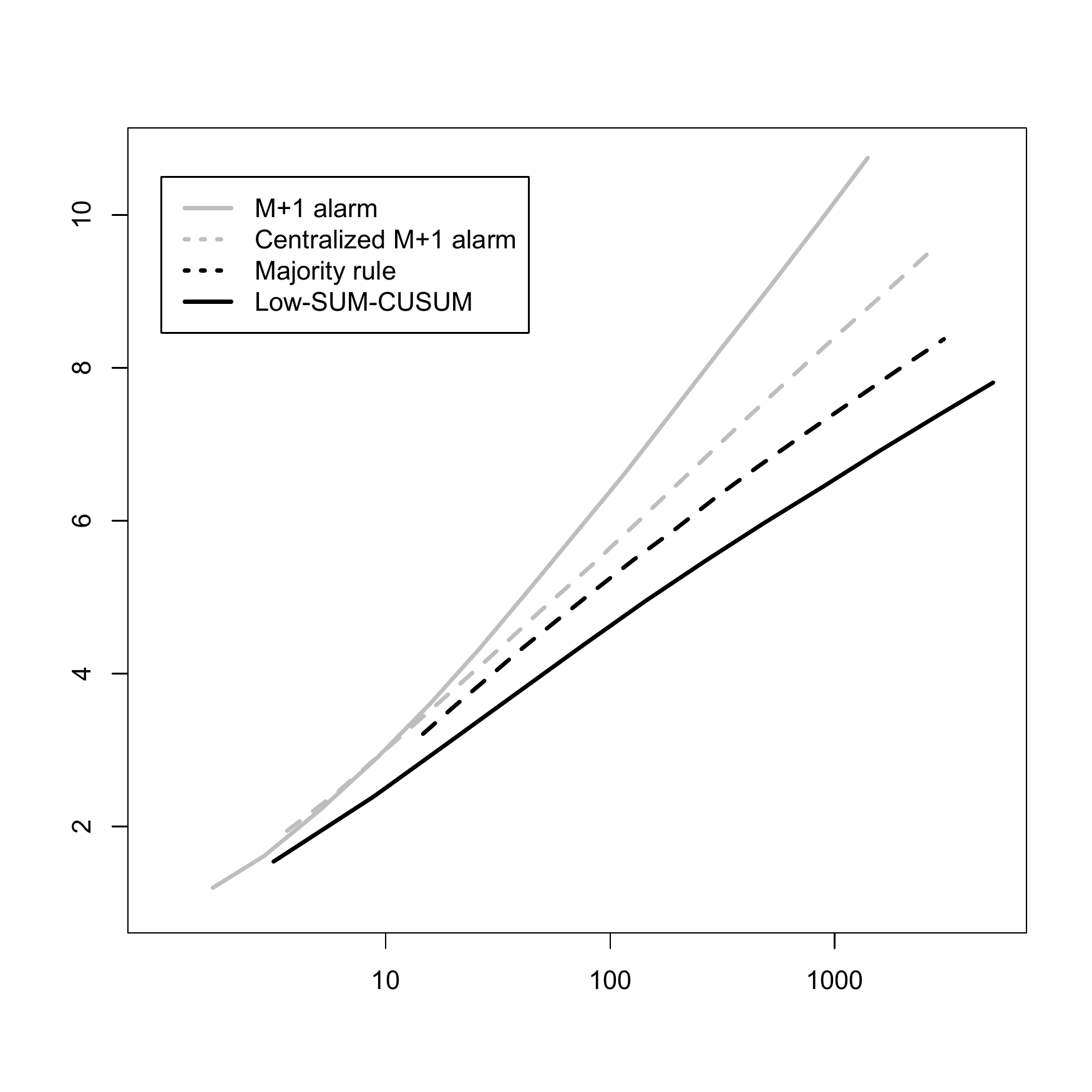}  \\
(a) $K=5$,  $M=2$ 
&
(b) $K=6$, $M=1$  \\
\includegraphics[width=0.45\linewidth]{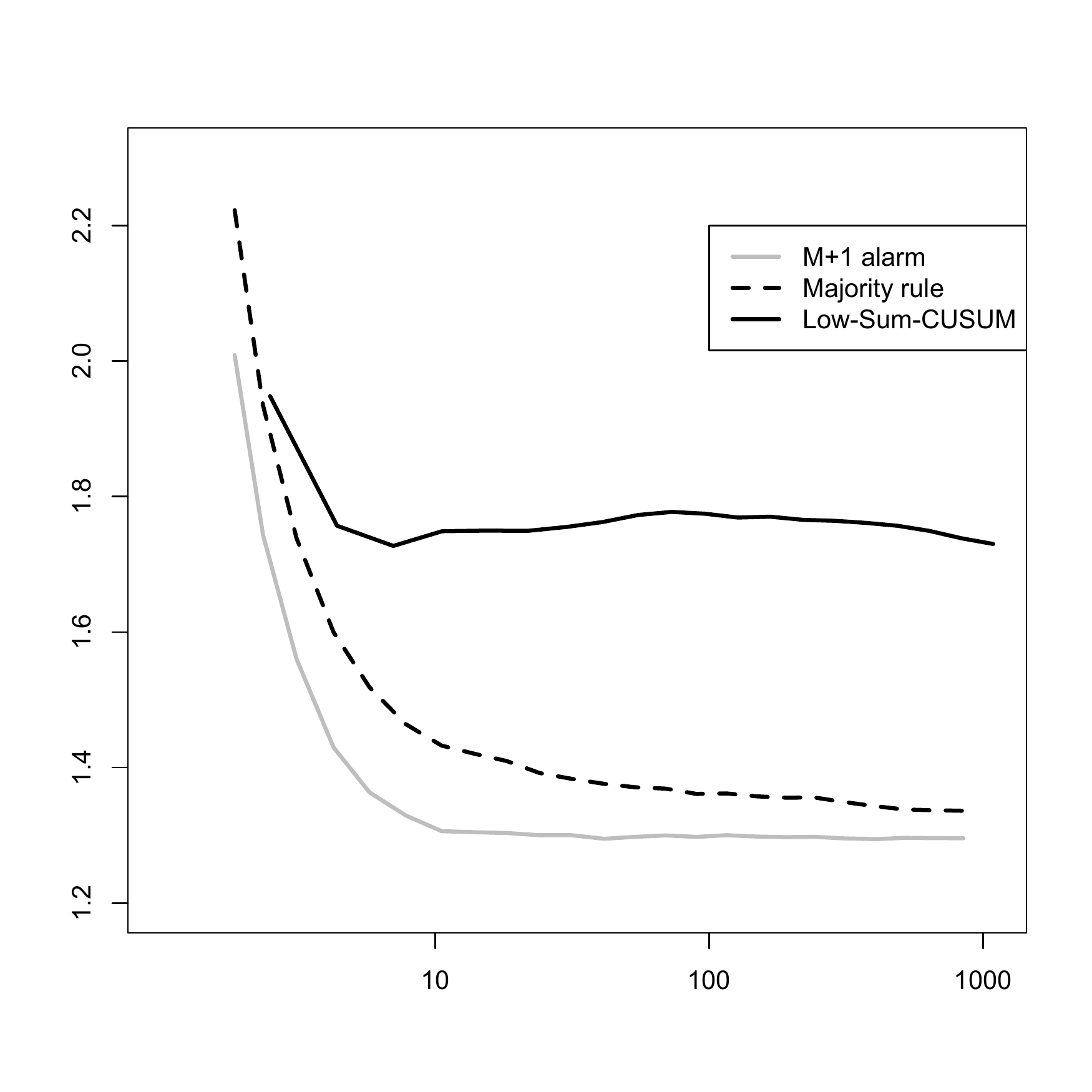}  
&
\includegraphics[width=0.45\linewidth]{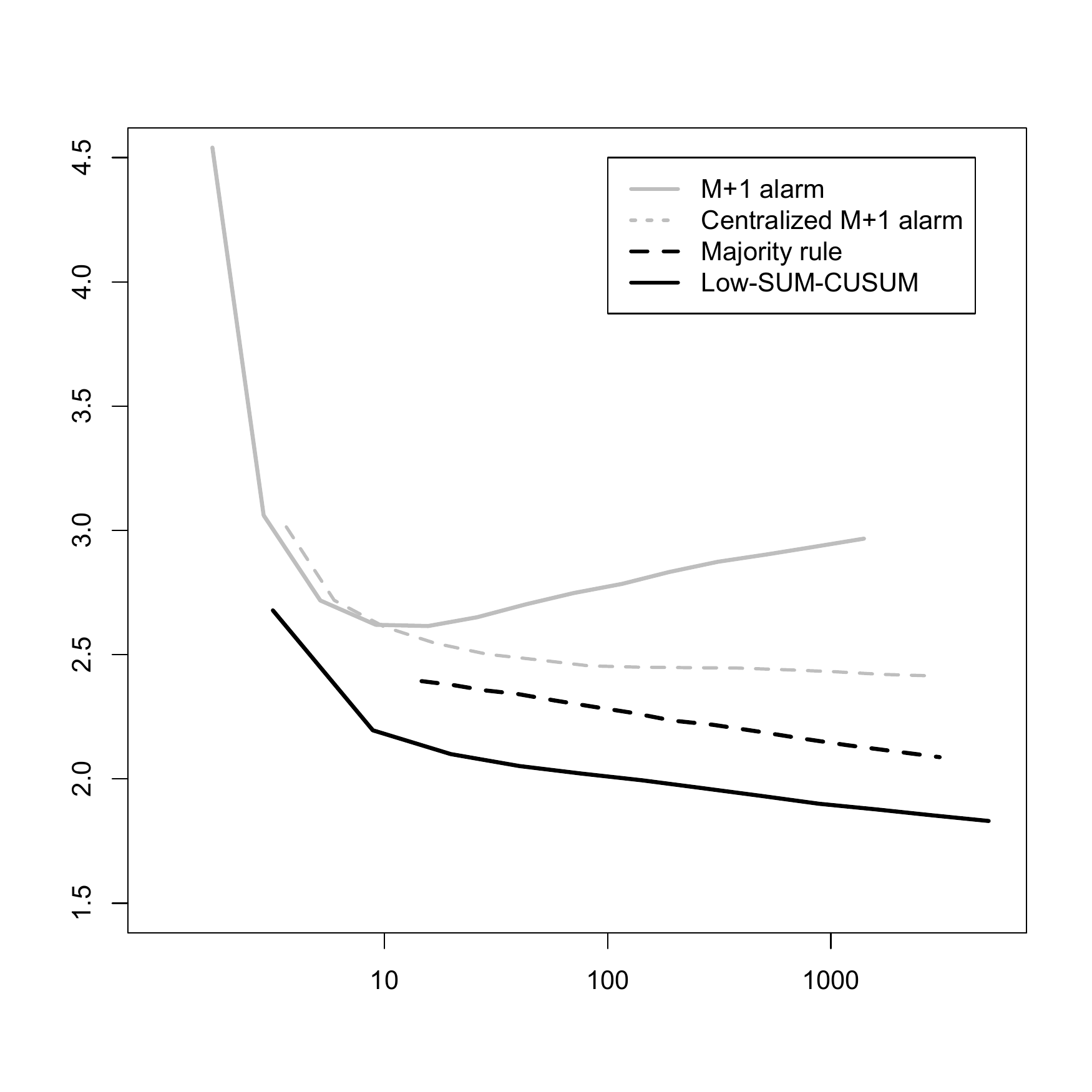} \\
(c) $K=5$,  $M=2$ 
& 
(d) $K=6$, $M=1$ \\
\end{tabular}
\caption{ In all  graphs, the horizontal axis corresponds to the worst-case expected time to false alarm (in log scale), i.e., $\log \gamma$. In (a) and (b), the vertical axis corresponds to the worst-case detection delay, whereas in (c) and (d) to  a normalized version of the latter, i.e., divided by $(\log \gamma)/ (K-2M) \cI)$. 
 In all graphs, the solid, dark lines corresponds to the 
Low-Sum-CUSUM,  the dashed line to the voting rule, and the gray, solid line to the  $M+1$ alarm.  The gray, dotted line in (b) and (d) corresponds to the centralized version of the  $M+1$ alarm.}
\end{figure}

\section{Conclusions} \label{sec:conclude}
In the classical multisensor sequential change-detection problem,   data are collected sequentially from a number of sensors, and the goal is to detect  quickly and accurately a change that is perceived by only an unknown subset of these sensors, while the observations in all non-affected sensors continue following their initial distribution. In this classical setup, any model mis-specification in the non-affected sensors  is  ignored and underestimated. 
In this paper we considered a different  formulation of this  problem, in which  at most $M$  unknown sensors are considered to be unreliable and are treated as  if they are controlled by an adversary.
This  generalizes the  formulation in  \cite{bay_lai}, 
in that we allow for more than one corrupt sensors, i.e., $M\geq 1$, and we assume that  the  subset, $\cB$, of  honest sensors  affected by the change is unknown. We proposed three families of detection rules  that  were evaluated under a generalization of Lorden's criterion,   \textit{in the worst case scenario regarding the strategy of the adversary, when  there are exactly $M$ corrupt sensors}. However, as in the classical multichannel setup, we did not adopt a worst case approach with respect to the affected subset of sensors, $\cB$. 
% Similarly to \cite{bay_lai},  we have focused on the case that the pre-change distribution is the same in all honest sensors, and the post-change distribution is the same in all honest sensors affected by the change. However, in the present work  we   consider a much more general setup compared to  \cite{bay_lai}, as we allow for more than one corrupt sensors, i.e., $M\geq 1$, and we assume that  the  subset, $\cB$, of  honest sensors  affected by the change is unknown. 

% To this end, we focus on the case that the size of the affected subset  $|\cB|$ is larger than $M$, i.e., $|\cB| \geq M+1$.  However, we should stress that while we take a worst case approach with respect to the number of corrupt sensors, we do not in general  require any knowledge regarding the affected subset $\cB$.

The first proposed procedure  stops as soon as $M+1$ local CUSUM statistics have crossed a common threshold. This procedure is shown to be the best, in an exact sense, than all other proposed rules in the special that there are $M+1$ honest sensors, all affected by the change.  Setting $M=1$ reveals that second alarm, proposed  in \cite{bay_lai}, is the best rule among the ones considered here   in the special case of  $K=3$ sensors.  In the general case that the number of honest sensors exceeds  the number of corrupt sensors by more than 1, the previous scheme  can be very inefficient, as its  first-order asymptotic performance is shown to be independent of the size of the affected subset. We show that it is possible to achieve  much better performance with a novel procedure, which also  does not require knowledge of the true size of the affected subset. This procedure  stops as soon as the  sum of the smallest $K-M$ local CUSUM statistics crosses a fixed threshold, and we refer to it  as  \textit{Low-Sum-CUSUM}. We show that its  first-order asymptotic performance is the same as that of a centralized CUSUM  that relies on  $|\cB|-M$ honest sensors, all affected by the change. We conjecture that this is the best possible first-order asymptotic performance in the presence of $M$ corrupt sensors,  but the proof of this result is an open problem.

These results are not relevant only for the design of sequential change-detection rules in an adversarial setup, but can also be useful for the  ``robustification'' of existing multichannel procedures.  Indeed, when there is a  large number of sensors, $K$, and a non-trivial lower bound $Q$ on the size of the affected subset, our results suggest that   \textit{Low-Sum-CUSUM} with a small $M\leq Q$ can lead to more robust   behavior with  a relatively small price in efficiency.

The procedures under consideration have low computational complexity. The heavier  communication requirements  from the sensors to the fusion center are imposed by   \textit{Low-Sum-CUSUM}, which requires that each sensor transmits the value of its local CUSUM statistic at each time. It is possible to design bandwidth-efficient modifications of this scheme, thresholding  each local CUSUM statistic below and communicating only when its value is above this threshold  \cite{mei_sympo}. It is also possible  to design energy-efficient modifications of   \textit{Low-Sum-CUSUM} \cite{7105922}, where the local CUSUM statistics do not need to be observed continuously at the sensors.   

Similarly to \cite{bay_lai},  we have focused on the case that the pre-change distribution is the same in all honest sensors, and the post-change distribution is the same in all honest sensors affected by the change. Another interesting generalization of our work is in the  non-homogeneous setup. Finally, our setup is clearly relevant in security related applications. An interesting  alternative approach in this context is a  game-theoretic  formulation.
% sensors, that is when the  pre-change and/or post-change distributions may vary across  the various honest sensors.

% in order to lighten the notation, however  our results can be generalized in a straightforward fashion in this case.
% for the $M+1$ alarm and the low-SUM-CUSUM. The same can be done for the voting rule, however the performance of this rule can b can be improve

% where the goal is to design a procedure that corresponds to a  worst-case strategy of the adversary.  
%understand the worst-possible str problem formulation in this context would be to  consider an  adversarial setup where the detector tries to understand the corrupt (or malicious) sensors, something that  poses limitations in the strategy of the adversary. 
%In this context, a game-theoretic  formulation is required, which has been considered before\textcolor{red}{add citations}, but in general it is an open problem. 

\section*{Appendix}
In this Appendix, all random variables are defined on some probability space $(\Omega, \cF, \Pro)$. 

\begin{lemma} \label{newlemma}
Let $X_g, Y_g$ be independent, non-negative, integer-valued random variables, parametrized by some positive constant $g$.
\begin{enumerate}
\item[(i)] If $\Pro(X_g \leq t) \leq t/g$ for every $t=0,1, 2, \ldots.$, then   $\Exp[X_g] \geq g/2$. 
\item[(ii)] If additionally $\Exp[Y^2_g]=o(g)$ as $g \rightarrow \infty$,  then  $\Exp[Y_g]= \Exp[\min \{X_g, Y_g\}] +o(1)$ as $g \rightarrow \infty$.
\end{enumerate}
\end{lemma}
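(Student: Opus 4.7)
For part (i), the plan is to exploit the integer-valued structure of $X_g$ via the tail-sum representation
\[
\Exp[X_g] \;=\; \sum_{t=0}^{\infty} \Pro(X_g > t) \;=\; \sum_{t=0}^{\infty} \bigl(1 - \Pro(X_g \leq t)\bigr),
\]
and then to use the hypothesis $\Pro(X_g \leq t) \leq t/g$ to conclude $\Pro(X_g > t) \geq 1 - t/g$ for every $t$ with $0 \leq t \leq g$. Truncating the sum at $n := \lfloor g \rfloor$ and computing the arithmetic series
\[
\sum_{t=0}^{n-1}\Bigl(1 - \tfrac{t}{g}\Bigr) \;=\; n \;-\; \tfrac{n(n-1)}{2g}
\]
gives a lower bound that, after a short algebraic verification distinguishing integer from non-integer $g$, is at least $g/2$. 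The degenerate case $g < 1$ is handled separately: then $\Pro(X_g \leq 0) \leq 0$ forces $X_g \geq 1$ a.s., and the conclusion holds trivially. No deep ingredient is needed here.

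For part (ii), the first move is the elementary identity
\[
0 \;\leq\; Y_g - \min\{X_g, Y_g\} \;=\; (Y_g - X_g)^{+},
\]
which reduces the claim to showing $\Exp[(Y_g - X_g)^{+}] = o(1)$ as $g \to \infty$. Since $Y_g$ is integer-valued and non-negative, I would write
\[
\Exp[(Y_g - X_g)^{+}] \;=\; \sum_{k \geq 1} \Pro(X_g \leq Y_g - k),
\]
then condition on $Y_g$ and invoke independence together with the hypothesis of part (i) (applied to the integer $Y_g - k$ when $Y_g \geq k$, and trivially otherwise) to get
\[
\Pro(X_g \leq Y_g - k \mid Y_g) \;\leq\; (Y_g - k)^{+}/g.
\]
Summing in $k$ and using $\sum_{k=1}^{Y_g}(Y_g - k) = Y_g(Y_g - 1)/2$ yields the clean bound
\[
\Exp[(Y_g - X_g)^{+}] \;\leq\; \frac{\Exp[Y_g(Y_g-1)]}{2g} \;\leq\; \frac{\Exp[Y_g^2]}{2g} \;=\; \frac{o(g)}{2g} \;=\; o(1).
\]

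The only mild technical point, and the step where one has to be careful rather than where the argument is hard, is keeping the conditioning rigorous: one applies the tail bound on $X_g$ to the random (but $Y_g$-measurable) level $Y_g - k$, which is licit because $X_g \perp Y_g$ and the hypothesis in (i) holds for every deterministic non-negative integer $t$. There is no real obstacle; both parts are short once the integer-valuedness is exploited through the tail-sum representation.
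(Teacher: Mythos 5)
Your proof is correct and follows essentially the same route as the paper's: part (i) is the identical tail-sum computation truncated at $\lfloor g\rfloor$, and part (ii), although you phrase the discrepancy as $\Exp[(Y_g-X_g)^{+}]$ and condition on $Y_g$ rather than summing the product of tails $\Pro(X_g\le t)\,\Pro(Y_g>t)$ as the paper does, reduces to the very same bound $\Exp[Y_g^2]/(2g)=o(1)$. There are no gaps.
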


\begin{proof}
(i) From the assumption of the Lemma and the non-negativity of probability  it follows that 
  $\Pro(X_g>t) \geq (1-t/g)^+$. Therefore, 
\begin{align*}
\Exp[X_g]&=1 + \sum_{t =1}^{\infty} \Pro(X_g>t) \\
&\geq 1 + \sum_{t =1}^{\lfloor g \rfloor}  (1-t/g) \\
&= 1+ \lfloor g \rfloor - \frac{\lfloor g \rfloor (1+\lfloor g \rfloor)}{2g} \\
&=  (1+ \lfloor g \rfloor) \left( 1-\frac{ \lfloor g \rfloor}{2g} \right) \geq g/2.
\end{align*}

(ii)  From the independence of $X_g$ and $Y_g$ it follows that 
\begin{align*}
\Pro(X_g>t, Y_g>t) &= \Pro(X_g>t) \, \Pro(Y_g>t) \\
&= \Pro(Y_g>t)- 
\Pro(X_g\leq t) \cdot \Pro(Y_g>t)
\end{align*}
for every $t=0,1, \ldots$. Therefore, 
$$
\Exp[\min \{X_g, Y_g\}] = \Exp[Y_g]-  \sum_{t =0}^{\infty} \Pro(X_g\leq t) \, \Pro(Y_g>t),
$$
and it suffices to show that the second term on the right-hand side goes to 0 as $g \rightarrow \infty$. Indeed, by the assumption on the cdf of $X_g$ we have 
\begin{align*}
 \sum_{t =0}^{\infty} \Pro(X_g\leq t) \, \Pro(Y_g>t) &\leq
\frac{1}{g}  \sum_{t =0}^{\infty} t \,  \Pro(Y_g>t)  \\
&= \frac{1}{2 g} \, \Exp[Y_g^2] 
\end{align*}
and the upper bound goes to 0 by the assumption on the second moment on $Y_g$. \\
\end{proof}

\begin{lemma} \label{lem:asy}
Let $\xi_{1}, \ldots, \xi_{|\cN|}$ be independent, exponential random variables with mean 1, and consider the order-statistics  $\xi_{(1)} \leq  \ldots \leq \xi_{(|\cN|)}$.  
   For each $1 \leq L \leq |\cN|$ and   $h>0$ set 
\begin{align}  \label{GG}
 G_L(h) \equiv \Pro \left( \sum_{k =1}^L  \xi_{(k)}  >h \right).
  \end{align}
Then,  as  $h \rightarrow \infty$
\begin{align}  \label{GG}
 G_L(h) \sim  \Theta(1) \, \exp\{ -(|\cN|/L)  h\} .
  \end{align}

\end{lemma}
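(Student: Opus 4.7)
The plan is to reduce $G_L(h)$ to the tail of a weighted sum of i.i.d.\ exponentials via R\'enyi's representation of exponential order statistics. Specifically, I would introduce i.i.d.\ Exp$(1)$ variables $E_1,\ldots,E_{|\cN|}$ such that, jointly in $k$,
\begin{equation*}
\xi_{(k)} \stackrel{d}{=} \sum_{j=1}^{k} \frac{E_j}{|\cN|-j+1},
\end{equation*}
and then interchange the order of summation to obtain
\begin{equation*}
\sum_{k=1}^{L}\xi_{(k)} \stackrel{d}{=} \sum_{j=1}^{L} a_j\, E_j, \qquad a_j=\frac{L-j+1}{|\cN|-j+1}.
\end{equation*}
A short computation of $a_j/a_{j+1}$ shows that the sequence $(a_j)$ is strictly decreasing whenever $|\cN|>L$, with maximum $a_1=L/|\cN|$; in the boundary case $L=|\cN|$ every $a_j$ equals $1$ and the sum is simply Gamma$(|\cN|,1)$.

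For the nontrivial regime $L<|\cN|$, the Laplace transform factorizes as $\prod_{j=1}^{L}(1-ta_j)^{-1}$ for $t<1/a_1=|\cN|/L$, and, since the $a_j$'s are distinct, a partial-fraction decomposition gives
\begin{equation*}
\prod_{j=1}^{L}\frac{1}{1-ta_j}=\sum_{j=1}^{L}\frac{A_j}{1-ta_j}, \qquad A_j=\prod_{k\neq j}\frac{a_j}{a_j-a_k}.
\end{equation*}
Inverting term by term yields the density $f(x)=\sum_{j}(A_j/a_j)e^{-x/a_j}$ on $[0,\infty)$, and integrating from $h$ to $\infty$ produces the closed form
\begin{equation*}
G_L(h)=\sum_{j=1}^{L} A_j \, e^{-h/a_j}.
\end{equation*}
Since $a_1>a_j$ for every $j\geq 2$, the $j=1$ term dominates as $h\to\infty$, so
\begin{equation*}
G_L(h)\sim A_1\,e^{-(|\cN|/L)h}, \qquad A_1=\prod_{k=2}^{L}\frac{a_1}{a_1-a_k}>0,
\end{equation*}
which is the claimed $\Theta(1)\,e^{-(|\cN|/L)h}$. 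The edge case $L=|\cN|$ produces a Gamma tail $\sim h^{|\cN|-1}e^{-h}/(|\cN|-1)!$ with exponential rate $|\cN|/L=1$; this matches the stated rate, which is all that is needed where the lemma is invoked in \eqref{ARL_lowsumcusum}.

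The main obstacle is essentially cosmetic: verifying the strict monotonicity of $(a_j)$ (a one-line check that $|\cN|>L$ implies $a_j>a_{j+1}$) and the strict positivity of $A_1$ (immediate from $a_1>a_k$ for $k\geq 2$). An alternative, if one wished to bypass the partial-fraction calculation, is to sandwich $G_L(h)$ by $P(a_1 E_1>h)=e^{-(|\cN|/L)h}$ below and $P(a_1\sum_{j=1}^{L} E_j>h)$ above; the latter is a Gamma$(L,1)$ tail scaled by $a_1$, which pins down the exponential rate $|\cN|/L$ at the cost of a polynomial factor, and this weaker form already suffices to drive the lower bound in \eqref{ARL_lowsumcusum}.
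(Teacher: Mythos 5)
Your argument is correct and follows essentially the same route as the paper: the R\'enyi representation of the spacings, the interchange of summation yielding $\sum_{j=1}^{L} a_j E_j$ with strictly decreasing weights $a_j=(L-j+1)/(|\cN|-j+1)$ when $L<|\cN|$, and identification of the dominant exponential $e^{-h/a_1}=e^{-(|\cN|/L)h}$. The only differences are that you derive the tail as an explicit linear combination of exponentials via partial fractions where the paper simply cites a standard lemma of Nagaraja, and that you flag the boundary case $L=|\cN|$ (where a polynomial prefactor appears and only the exponential rate survives), a point the paper's proof silently omits.
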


\begin{proof}
Set $\xi_0=0$. From the so-called R\'enyi representation \cite{Renyi53} it follows  that the spacings 
$\eta_j=\xi_{(j)}- \xi_{(j-1)}$, $1 \leq j \leq |\cN|$ 
are independent, exponential random variables such that  
$$\eta_j \sim \cE(|\cN|-j +1)\sim \frac{\xi_j}{|\cN|-j +1}, \quad 1 \leq j \leq |\cN|.
$$ 
Then, 
$$ \sum_{k =1}^L  \xi_{(k)} =  \sum_{k =1}^L  \sum_{j=1}^{k} \eta_j=  \sum_{j =1}^L  \sum_{k=j}^{L} \eta_j=  \sum_{j =1}^L (L-j+1)  \eta_j,$$
and consequently we have 
$$ \sum_{k =1}^L  \xi_{(k)}  \; \overset{D}= \;   \sum_{j =1}^L  \lambda_j \; \xi_j,
$$
where 
$$ \lambda_j= \frac{L-j+1}{|\cN|-j +1}, \quad 1 \leq j \leq L.$$
Therefore, when $L<|\cN|$,  we have $\lambda_1 > \ldots> \lambda_{|\cN|}$ and from \cite[Lemma 11.3.1]{Nagaraja2006} it follows that there are 
positive numbers  $C_L$, $1 \leq L < |\cN|$ so that
$$
G_{L}(h)=   \sum_{j =1}^L C_j e^{-h/\lambda_j } \sim C_{1} e^{ -h/ \lambda_{1}} 
 =  C_{1}  e^{ -(|\cN|/L) h}.
$$
%When $L=|\cN|$,  $\lambda_j=1$ for every $1 \leq j \leq |\cN|$. Since $\sum_{j =1}^{|\cN|}  \xi_j$  is an   Erlang random variable with parameter $|\cN|$, we conclude that 
%$$G_{|\cN|}(h)= e^{-h} \sum_{j=0}^{|\cN|-1} \frac{h^j}{ j!} \sim e^{-h} \, \frac{h^{|\cN|-1}}{ 
%(|\cN|-1)!}. $$

\end{proof}

 \begin{lemma}[\cite{farrell1964}] \label{lem:simultaneous}
Let $\{U_t^i\}_{t \in \bN}$, $1 \leq i \leq M$ be possibly dependent, random walks with positive drifts. That is,  for any given $1 \leq i \leq M$, the  increments $\{U_t^i-U_{t-1}^i\}_{t \in \bN}$ are integrable, iid random variables with mean $\mu_i>0$. Consider the family of stopping times 
$$
T_b=\inf\left\{t \in \bN: \min_{1 \leq i \leq M} U_t^i \geq b\right\},
$$
where $b>0$.  Then,  as $b \rightarrow \infty$ we have 
$$
\Exp[T_b] \sim \frac{b}{\min_{1 \leq i \leq M} \mu_i}.
$$
\end{lemma}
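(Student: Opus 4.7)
Let $\mu^* := \min_{1 \leq i \leq M} \mu_i$ and let $i^*$ be an index attaining this minimum. The plan is to sandwich $\Exp[T_b]$ between two quantities asymptotic to $b/\mu^*$ as $b \to \infty$. The lower half is immediate from the definition: any $t$ with $\min_i U^i_t \geq b$ must in particular have $U^{i^*}_t \geq b$, so $T_b \geq \tau^{i^*}_b := \inf\{t \in \bN : U^{i^*}_t \geq b\}$. Since $\tau^{i^*}_b$ is an integrable stopping time for the iid sequence of increments of $U^{i^*}$ (a random walk with strictly positive drift $\mu^*$), Wald's identity together with $U^{i^*}_{\tau^{i^*}_b} \geq b$ yields $\mu^* \Exp[\tau^{i^*}_b] \geq b$, whence $\Exp[T_b] \geq b/\mu^*$ for every $b > 0$ and, in particular, $\liminf_b \Exp[T_b]/b \geq 1/\mu^*$.

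For the matching upper bound I would first record the almost-sure behaviour via the strong law of large numbers: since $U^i_t/t \to \mu_i \geq \mu^*$ almost surely for every $i$, for any $\epsilon \in (0, \mu^*)$ the random index
\[
N(\epsilon) := \sup\Bigl\{ t \in \bN : \min_{1 \leq i \leq M} U^i_t/t \leq \mu^* - \epsilon \Bigr\}
\]
is almost surely finite. For $t > N(\epsilon) \vee \lceil b/(\mu^* - \epsilon) \rceil$ every $U^i_t$ exceeds $b$, so $T_b \leq N(\epsilon) \vee \lceil b/(\mu^* - \epsilon) \rceil$. Dividing by $b$ and sending first $b \to \infty$ and then $\epsilon \downarrow 0$ gives $T_b/b \to 1/\mu^*$ almost surely.

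The main obstacle is upgrading this pathwise convergence to convergence in mean, i.e., establishing uniform integrability of $\{T_b/b\}_{b > 0}$. The plan is to reduce to a bounded-increment problem by truncation. For a large $C > 0$, set $\bar X^i_t := (X^i_t \wedge C) \vee (-C)$, let $\bar U^i_t := \sum_{s=1}^{t} \bar X^i_s$, and define $\bar T_b$ in the obvious analogous way; by dominated convergence applied to the integrable increments $X^i_1$, the drifts $\bar\mu_i := \Exp[\bar X^i_1]$ converge to $\mu_i$ as $C \to \infty$, so they remain positive for $C$ sufficiently large, and $\min_i \bar\mu_i \uparrow \mu^*$. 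For the truncated walks the increments are bounded in absolute value, so second moments are finite, overshoots at upward first-passage times are bounded by $C$, and Azuma--Hoeffding provides exponential tail estimates on $\bar U^i_t - \bar\mu_i t$; a Wald-and-overshoot argument on the slowest truncated walk combined with a union bound over $i$ on the event that $\bar T_b$ exceeds $\bar\tau^{i^*}_b$ by much then yields $\Exp[\bar T_b] \leq b/\min_i \bar\mu_i + O(1)$---this is essentially the content of the bounded-increment version of Farrell's 1964 lemma. To pass back to the original walks I would control the discrepancy by coupling: one writes $T_b \leq \bar T_b + R_b(C)$, where $R_b(C)$ absorbs the effect of the capped mass of the increments, and uses integrability of $X^i_1$ to argue $\Exp[R_b(C)]/b \to 0$ uniformly as $b \to \infty$ after $C$ is fixed large. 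Taking $\limsup_b$ and then $C \to \infty$ produces $\limsup_b \Exp[T_b]/b \leq 1/\mu^*$, closing the gap with the lower bound. The delicate technical step will be making this last transfer rigorous with only first-moment hypotheses on the original increments.
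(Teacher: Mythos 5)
The paper does not actually prove this lemma: its entire ``proof'' is the citation ``This is a special case of [Theorem 3, Farrell 1964]'', so the benchmark you are being compared against is Farrell's theorem itself, which establishes $\Exp[T_b]\sim b/\min_i\mu_i$ for simultaneous first passage under first-moment hypotheses only. Your lower bound is complete and correct ($T_b\ge\tau^{i^*}_b$ plus Wald, using the standard fact that a positive-drift walk with integrable increments has an integrable first-passage time), and the strong-law argument for $T_b/b\to 1/\mu^*$ almost surely is also fine. But the almost-sure statement plus the lower bound is the easy half; the entire content of Farrell's theorem is the upgrade to convergence in mean, and that is exactly where your proposal stops being a proof and becomes a plan with two concrete holes.

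First, the intermediate claim $\Exp[\bar T_b]\le b/\min_i\bar\mu_i+O(1)$ for the truncated walks is false in general: if two or more walks share the minimal drift, the first time they are \emph{simultaneously} above $b$ exceeds the first-passage time of any single one of them by an amount whose expectation is of order $\sqrt{b}$ (the positive part of the difference of two first-passage times, each fluctuating on scale $\sqrt{b}$), not $O(1)$. This is repairable, since only $o(b)$ is needed for the first-order asymptotic, but as stated the ``Wald-and-overshoot plus union bound'' step does not deliver what you claim. Second, and more seriously, the transfer from the truncated walks back to the original ones is not an argument at all. Because you truncate on both sides, neither $\bar U^i_t\le U^i_t$ nor $\bar U^i_t\ge U^i_t$ holds, so there is no monotone coupling giving $T_b\le\bar T_b$; the remainder $R_b(C)$ is never defined, and controlling it ``using integrability of $X^i_1$'' is precisely the part of Farrell's proof that requires work under first-moment-only hypotheses (a finite second moment would make uniform integrability of $T_b/b$ routine, but that is not assumed here, and indeed the paper explicitly avoids second-moment conditions elsewhere). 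You flag this step yourself as the delicate one; until it is carried out, the upper bound --- and hence the lemma --- is not proved. Either execute that step (e.g.\ via a one-sided truncation $X\wedge C$, which does give $T_b\le\bar T_b$ at the cost of increments unbounded below, and then a separate uniform-integrability argument), or do what the paper does and cite Farrell's Theorem 3 directly.
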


\begin{proof}
This is a special case of \cite[Theorem 3]{farrell1964}
\end{proof}

\bibliographystyle{ieeetr}
\bibliography{multisensor}

\end{document}